\documentclass[pdflatex,sn-mathphys-ay]{sn-jnl}

\usepackage{graphicx}%
\usepackage{multirow}%
\usepackage{amssymb}
\usepackage{amsmath}
\usepackage{amsfonts}
\usepackage{mathrsfs}
\usepackage{mathtools}
\usepackage[title]{appendix}%
\usepackage{xcolor}%
\usepackage{textcomp}%
\usepackage{manyfoot}%
\usepackage{booktabs}%
\usepackage{algorithm}%
\usepackage{algorithmicx}%
\usepackage{algpseudocode}%
\usepackage{listings}%
\usepackage{enumitem}
\usepackage{url}
\usepackage{hyperref}

\usepackage{tikz}
\usetikzlibrary{arrows.meta,positioning,fit,calc,decorations.pathreplacing,shapes.multipart}
\usepackage{cleveref}

\theoremstyle{thmstyleone}
\newtheorem{theorem}{Theorem}[section]
\newtheorem{proposition}[theorem]{Proposition}
\newtheorem{lemma}[theorem]{Lemma}
\newtheorem{corollary}[theorem]{Corollary}
\theoremstyle{definition}
\newtheorem{definition}[theorem]{Definition}
\newtheorem{assumption}[theorem]{Assumption}
\theoremstyle{remark}
\newtheorem{remark}[theorem]{Remark}

\begin{document}

\title{From Microscopic Damage to Macroscopic Games: A Dimensionality Reduction of Stem Cell Homeostasis}

\author[1,2]{\fnm{Jiguang} \sur{Yu}}\email{jyu678@bu.edu}
\equalcont{These authors contributed equally to this work as co-first authors.}

\author*[3]{\fnm{Louis Shuo} \sur{Wang}}\email{wang.s41@northeastern.edu}
\equalcont{These authors contributed equally to this work as co-first authors.}

\author[4]{\fnm{Shihan} \sur{Ban}}\email{banshihan@uchicago.edu}
\equalcont{These authors contributed equally to this work as co-first authors.}

\affil[1]{\orgdiv{College of Engineering},
  \orgname{Boston University},
  \orgaddress{\city{Boston}, \postcode{02215}, \state{MA}, \country{USA}}}

\affil[2]{\orgdiv{GSS Fellow of Institute for Global Sustainability},
  \orgname{Boston University},
  \orgaddress{\city{Boston}, \postcode{02215}, \state{MA}, \country{USA}}}

\affil[3]{\orgdiv{Department of Mathematics},
  \orgname{Northeastern University},
  \orgaddress{\city{Boston}, \postcode{02115}, \state{MA}, \country{USA}}}

\affil[4]{\orgdiv{Committee on Computational and Applied Mathematics},
  \orgname{University of Chicago},
  \orgaddress{\city{Chicago}, \postcode{60637}, \state{IL},\country{USA}}}

\abstract{
Tissues must maintain macroscopic homeostasis despite the continuous microscopic accumulation of cellular damage. Theoretical models of this process often suffer from a disconnect between microscopic biophysics and macroscopic phenomenological games. Here, we bridge this gap by deriving an exact dimensionality reduction of a physiologically structured partial differential equation (PDE) into a low-dimensional dynamical system. Under the condition of uniform mortality, we mathematically demonstrate that tissue homeostasis operates as an induced Nash equilibrium, where the per-capita net growth rates of stem and differentiated phenotypes perfectly equalize. This reduction yields closed-form algebraic rules, the Ratio and Equalization Laws, that map continuous microscopic state dynamics to measurable macroscopic observables. To demonstrate the biological utility of this framework, we present a concrete, falsifiable case study of the murine intestinal crypt. By modeling crypt regeneration following irradiation-induced stem cell depletion, our framework successfully recovers the experimentally observed reliance on progenitor dedifferentiation. Furthermore, the model generates explicit, testable predictions, enabling the in vivo estimation of hard-to-measure lineage plasticity rates directly from aggregate static cell counts. This work provides a rigorous, predictive mathematical foundation for understanding how fast-renewing tissues filter microscopic noise to sustain macroscopic regenerative capacity.}

\keywords{
Dimensionality reduction; Partial Differential Equations; Evolutionary game theory; Stem cell homeostasis; tissue homeostasis; stem cell plasticity.
}

\maketitle

\section*{Author summary}
How do our bodies maintain healthy organs when individual cells are constantly accumulating damage and dying? To survive, tissues rely on stem cells to replace lost tissue, and sometimes even allow mature cells to revert back into stem cells to help with emergency repairs. However, understanding exactly how tissues coordinate this complex balancing act has been historically difficult. Mathematical models usually focus either on the microscopic physical wear-and-tear of single cells or on the macroscopic "game" played by the whole cell population, but rarely both.

In this study, we bridge that gap. We developed a mathematical method that directly translates the complex physical rules of cell damage into a much simpler game-theory framework. We discovered that a healthy, stable tissue acts like a perfectly balanced game—a "Nash equilibrium"—where neither stem cells nor mature cells hold a competitive growth advantage over the other. By applying our theory to the rapid regeneration of the mouse intestine, we demonstrate that our equations can accurately predict how tissues heal after severe injury. Ultimately, our work provides biologists with a practical new tool: the ability to calculate hidden, complex tissue healing rates using only simple snapshot measurements of cell populations.

\section{Introduction}

Multicellular tissues face a fundamental conflict:
they must preserve macroscopic stability (homeostasis) while individual cells continually accumulate molecular damage and drift toward disorder 
\citep{xavier_da_silveira_dos_santos_single_2019,guillot_mechanics_2013}.
In rapidly renewing systems like the intestinal epithelium and hematopoietic system, stem and progenitor cells are continuously exposed to endogenous and environmental stressors  \citep{chiacchiera_transcriptional_2019,beumer_regulation_2016}.
Over time, repeated challenges produce a broad cell-to-cell distribution of defects---ranging from DNA lesions to metabolic decline---that acts as a dynamic `hidden coordinate' of damage.
As this coordinate evolves, it reshapes intracellular signaling, biases fate choices, and undermines long-term tissue maintenance \citep{jones_stem_2023,mcneely_dna_2020,chatterjee_aging_2021,mi_mechanism_2022,liu_stem_2022,oh_stem_2014}.
More generally, reciprocal feedback in multiscale biological systems can generate not only homeostatic regulation but also explicit transitions to spatial niche formation and patterned heterogeneity in spatially extended settings \citep{yu_chemotactic_2026}.

To cope with this heterogeneity, organisms exploit local regulatory feedback and cellular plasticity \citep{lou_homeostasis_2019,biteau_maintaining_2011,bates_agent-based_2023}.
Differentiated cells can revert to a proliferative stage through dedifferentiation to replace lost tissue \citep{jopling_dedifferentiation_2011, blanpain_plasticity_2014}.
For example, following the ablation of native stem cells in the intestine or airway, committed progenitors can actively regain stem-like properties to repair the epithelium \citep{murata_ascl2-dependent_2020, tata_dedifferentiation_2013, tetteh_replacement_2016}. 
Yet, this plasticity is a double-edged sword: while supporting repair, its misregulation can promote dysplasia or malignant transformation \citep{van_der_heijden_stem_2019,beumer_regulation_2016,chiacchiera_transcriptional_2019}, functioning effectively as a high-stakes strategy in the tissue's struggle for survival \citep{bensellam_mechanisms_2018, nordmann_role_2017}.

\subsection{The challenge of scale separation: Physics vs. Games}
Understanding how tissues coordinate this plasticity requires bridging two theoretical scales.
At the microscopic scale, physiologically structured partial differential equations (PDEs) capture continuous damage accumulation and division events 
\citep{magal_theory_2018,perthame_transport_2007,cui_hopf-bifurcation_2024,zhang_periodic_2019}.
However, structured PDEs are mechanistic but hard to interpret; their infinite-dimensional nature obscures simple, testable laws for the whole tissue.

At the macroscopic level, evolutionary game theory elegantly models ecological mechanisms of tissue coordination \citep{zhu_integrating_2016,huang_stochastic_2015,argasinski_interaction_2018,helbing_stochastic_1996}. Yet, a critical gap remains: game-theoretic ``payoff functions'' are typically phenomenological, assuming timescale separations that often fail in regenerating tissues \citep{dieckmann_dynamical_1996,dercole_analysis_2008,metz_adaptive_1995,geritz_evolutionarily_1998}. Without a rigorous derivation linking continuous micro-physics directly to macro-game competitive advantages, it remains unclear whether game-theoretic stability is a genuine biological property or a mathematical artifact.
Recent work has connected stem--TD lineage dynamics across scales, from density-dependent continuoustime Markov chain models to diffusion approximations and aggregate total-mass laws, including ratio and equalization constraints in related damage-structured settings \citep{yu_dedifferentiation_2026}. The present work differs in deriving an exact deterministic closure and an induced game-theoretic interpretation directly from the transport model.

\subsection{Contributions}
In this work, we bridge this scale gap by deriving an exact closure of the total-mass balance laws into a planar ODE under uniform TD mortality. We move beyond metaphor to demonstrate that the ``game'' played by cells is an emergent physical property of lineage dynamics (Figure~\ref{fig:multiscale_pipeline}). Our specific contributions offer new theoretical and practical insights:
\begin{enumerate}[label=(\roman*)]
\item \textbf{Exact Dimensionality Reduction:} We identify a structural condition under which infinite-dimensional PDE dynamics collapse exactly into a two-variable system. We emphasize that this condition—uniform mortality among terminally differentiated (TD) cells—serves as a mathematical closure condition rather than a universal biological claim. It provides a rigorous justification for using simple compartment models to study complex, heterogeneous tissues.
\item \textbf{Homeostasis as a Nash Equilibrium:} We establish that tissue homeostasis is mathematically equivalent to a Nash equilibrium in an induced evolutionary game. By deriving payoff functions'' directly from cell-cycle parameters, we show stability is achieved when stem and TD phenotypes reach fitness equalization'' (equal per-capita net growth rates).
\item \textbf{A Quantitative Extinction Threshold:} We establish a sharp ``extinction-growth'' threshold by analyzing the global phase-plane structure, proving that under compensatory feedback, the system globally converges to this homeostatic Nash equilibrium.
\item \textbf{Biological Validation (Murine Intestinal Crypts):} We apply these exact laws to a concrete, falsifiable case study of the murine intestinal crypt. 
The framework successfully models dedifferentiation-driven regeneration and provides a practical tool for biologists, allowing hard-to-measure rates (such as dedifferentiation flux) to be estimated directly from static, macroscopic flow cytometry observables.
\end{enumerate}

\begin{figure*}[htbp]
    \centering
    \includegraphics[width=\textwidth]{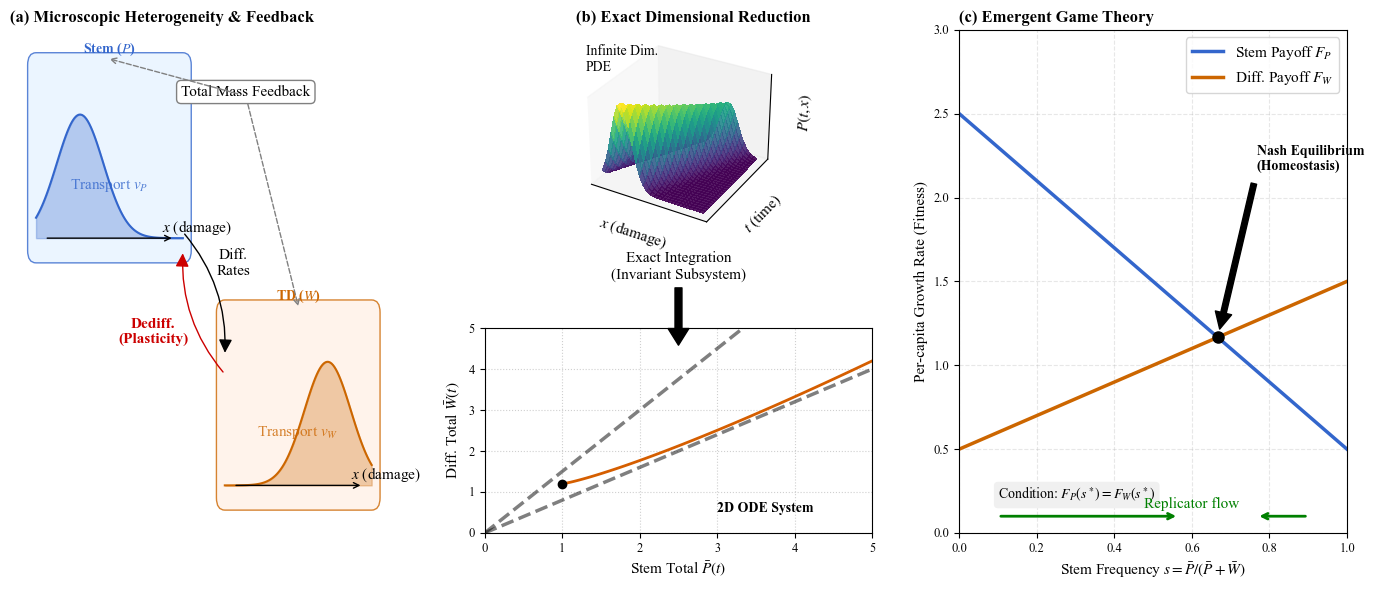} 
    \caption{\textbf{From microscopic heterogeneity to macroscopic game theory.} 
    \textbf{(a) Microscopic scale:} Stem ($P$, blue) and TD ($W$, orange) cells are heterogeneous populations distributed over a damage coordinate $x$.
    \textbf{(b) Exact reduction:} The infinite-dimensional PDE dynamics (top surface) collapse exactly onto a 2D subsystem for totals (bottom phase plane) under uniform mortality.
    \textbf{(c) Macroscopic law:} Homeostasis emerges as a Nash equilibrium where per-capita growth rates (payoffs) equalize. Green arrows indicate the replicator flow.}
    \label{fig:multiscale_pipeline}
\end{figure*}

\subsection{Most direct experimental settings and falsifiable readouts}
The exact reduction and induced game structure are most directly testable in fast-renewing tissues where progenitor output is feedback-regulated and injury elicits measurable plasticity. Two canonical settings are rapidly renewing epithelia (e.g., intestinal crypts, airway epithelium) \citep{gall_homeostasis_2023} and the hematopoietic system \citep{zhao_regulation_2015,safina_new_2024}.

In these paradigms, our theory makes three falsifiable predictions at the level of aggregate compartment readouts, testable via standard lineage tracing, pulse-chase, or EdU-BrdU proliferation assays. The framework is refuted if measured totals violate the closed balance laws under uniform mortality (Corollary~\ref{cor:uniform-death-closure}; Fig.~\ref{fig:closure_condition}):
\begin{enumerate}[label=(\roman*)]
\item \textit{Ratio Law and Equalization Law:} At steady state, the stem-to-TD ratio must satisfy $\bar P^*/\bar W^*=\delta/\lambda_P(\bar W^*)$. Similarly, the required dedifferentiation rate must match $\lambda_R(\bar P^*)=\delta\big(p_2(\bar W^*)-p_1(\bar W^*)\big)$. Both quantities can be estimated from aggregate counts combined with division, death, and return-flux assays.
\item \textit{Nash Equilibrium:} The Nash/homeostasis condition is falsifiable as payoff equalization in observables. At the homeostatic point, both per-capita net growth rates vanish and coincide: $F_P(\bar P^*,\bar W^*)=F_W(\bar P^*,\bar W^*)=0$.
\item \textit{Extinction--Growth Threshold}: This threshold is falsifiable through open-loop perturbations. Varying the renewal bias $\Delta p=\hat p_1-\hat p_2$ or the TD death rate $\delta$ should switch the origin from stable (extinction) to saddle (growth) precisely at $\Delta p_{\mathrm{crit}}=-\hat\lambda_R/\delta$.
\end{enumerate} 

The structure of the work goes as follows. Section~\ref{sec:model} formulates the damage-structured PDE and establishes well-posedness. Section~\ref{sec:balance_reduction} derives exact balance laws and proves that under uniform mortality the infinite-dimensional dynamics close exactly to a two-dimensional ordinary differential equation (ODE). Section~\ref{sec:replicator_game} identifies the induced replicator structure and shows homeostasis is a Nash equilibrium with closed-form game-theoretic laws. Section~\ref{sec:dynamics} establishes the extinction-growth threshold, proves global convergence via Bendixson--Dulac analysis, and rules out periodic orbits. Section~\ref{sec:numerics} provides computational verification of all main results. Section~\ref{sec:case_study} presents the case study on murine intestinal crypt homeostasis and regeneration. Section~\ref{sec:discussion} discusses the experimental interpretation of our results and provides limitations and extensions of the present study.

\section{Damage-structured PDE model}
\label{sec:model}

\subsection{State variables and processes}
\label{subsec:state_processes}
We consider a two-compartment stem--TD lineage in which cells accumulate a scalar damage coordinate $x\ge 0$. Let
$P(t,x)\ge 0$ and $W(t,x)\ge 0$ denote, respectively, the stem and TD densities at time $t\ge 0$ and damage level $x\ge 0$. The corresponding total masses are
\begin{equation}\label{eq:totals_def}
\bar P(t)=\int_0^\infty P(t,x)\,dx,\qquad
\bar W(t)=\int_0^\infty W(t,x)\,dx.
\end{equation}
Damage accumulates deterministically along characteristics at constant speeds $v_P,v_W>0$, yielding transport terms $v_P\partial_x P$ and $v_W\partial_x W$, as in classical structured transport models \citep{perthame_transport_2007,perthame_introduction_2023}. Stem cells divide with a feedback-regulated rate $\lambda_P(\bar W)$, and upon division each daughter inherits a fixed fraction of the mother's damage (deterministic inheritance). Concretely, a division event produces:
(i) two stem daughters with probability $p_1(\bar W)$ and damage fractions $\alpha_1,\alpha_2\in(0,1)$;
(ii) two TD daughters with probability $p_2(\bar W)$ and damage fractions $\beta_1,\beta_2\in(0,1)$;
(iii) one stem and one TD daughter with probability $p_3(\bar W):=1-p_1(\bar W)-p_2(\bar W)$ and damage fractions $\gamma_1,\gamma_2\in(0,1)$.
Conservation at division imposes
\begin{equation}\label{eq:inherit_cons}
\alpha_1+\alpha_2=\beta_1+\beta_2=\gamma_1+\gamma_2=1.
\end{equation}

The resulting nonlocal birth operators take the standard ``push-forward'' form induced by deterministic inheritance:
\begin{align}
\label{eq:birth_ops}
\mathcal B_P[P](t,x) &=
\frac{p_1(\bar W(t))\lambda_P(\bar W(t))}{\alpha_1}P\!\left(t,\frac{x}{\alpha_1}\right)
+\frac{p_1(\bar W(t))\lambda_P(\bar W(t))}{\alpha_2}P\!\left(t,\frac{x}{\alpha_2}\right) \nonumber\\[6pt]
&+\frac{p_3(\bar W(t))\lambda_P(\bar W(t))}{\gamma_1}P\!\left(t,\frac{x}{\gamma_1}\right),\\[6pt]
\nonumber
\mathcal B_W[P](t,x) &=
\frac{p_2(\bar W(t))\lambda_P(\bar W(t))}{\beta_1}P\!\left(t,\frac{x}{\beta_1}\right)
+\frac{p_2(\bar W(t))\lambda_P(\bar W(t))}{\beta_2}P\!\left(t,\frac{x}{\beta_2}\right) \\[6pt]
&+\frac{p_3(\bar W(t))\lambda_P(\bar W(t))}{\gamma_2}P\!\left(t,\frac{x}{\gamma_2}\right).
\end{align}
The concrete feedback functions $\lambda_P$, $\lambda_R$, and $p_i$ will appear in Section~\ref{subsec:feedback_assumptions}.
In addition, TD cells can revert to the stem compartment via dedifferentiation $W\to P$ with rate $\lambda_R(\bar P)$, and TD cells die at a damage-dependent rate $\delta(x)\ge 0$ (Section~\ref{subsec:feedback_assumptions}). Putting these components together yields the transport--reaction system \citep{wang_damage-structured_2026}
\begin{equation}\label{eq:PDE_model}
\begin{cases}
\partial_t P + v_P \partial_x P
= \mathcal B_P[P](t,x) - \lambda_P(\bar W(t))\,P(t,x) + \lambda_R(\bar P(t))\,W(t,x),\\[4pt]
\partial_t W + v_W \partial_x W
= \mathcal B_W[P](t,x) - \big(\delta(x)+\lambda_R(\bar P(t))\big)\,W(t,x),
\end{cases} \qquad x>0,\ t>0.
\end{equation}
Since $v_P,v_W>0$, characteristics enter the domain at $x=0$. We impose homogeneous inflow boundary conditions
\begin{equation}\label{eq:inflow_bc}
P(t,0)=W(t,0)=0,\qquad t\ge 0,
\end{equation}
so that newborn mass at small damage is generated by the nonlocal birth terms rather than injected through the boundary; this is consistent with standard structured-transport formulations and ensures boundary terms vanish in total-mass integrations \citep{perthame_introduction_2023,liang_global_2025}. Initial data are prescribed by
\begin{equation}\label{eq:IC}
P(0,x)=P_0(x),\qquad W(0,x)=W_0(x),\qquad x>0.
\end{equation}

\subsection{Feedback maps and modeling assumptions}
\label{subsec:feedback_assumptions}
Numerous biological experiments have confirmed the existence of negative feedback regulators, where TD cells secrete signaling molecules to inhibit stem cell self-renewal and proliferation. For instance, within the TGF-$\beta$ superfamily, GDF11 in the olfactory epithelium, GDF-8 (myostatin) in skeletal muscle, and BMP3 in bone act as critical molecular brakes that prevent hyperplasia by restricting progenitor expansion and tissue mass \citep{wu_autoregulation_2003, mcpherron_regulation_1997, daluiski_bone_2001}. Furthermore, similar feedback loops are essential for stabilizing rapidly renewing tissues, such as the Cxcl12-mediated maintenance of hematopoietic stem cell quiescence and the autocrine TGF-$\beta$ mechanisms that limit keratinocyte overgrowth \citep{tzeng_loss_2011, yamasaki_keratinocyte_2003}.

At the same time, we consider the suppression of dedifferentiation by direct contact of TD cells with stem cells \citep{tata_dedifferentiation_2013,guo_dedifferentiation_2022}. 
Therefore, regulation is encoded through monotone (typically decreasing) Hill-type feedback maps,
\begin{align}\label{eq:Hill_maps}
\begin{aligned}
p_1(\bar W)=\frac{\hat p_1}{1+(k_1\bar W)^{m_1}},\quad
p_2(\bar W)=\frac{\hat p_2}{1+(k_2\bar W)^{m_2}},\\[4pt]
\lambda_P(\bar W)=\frac{\hat\lambda_P}{1+(k_3\bar W)^{m_3}},\quad
\lambda_R(\bar P)=\frac{\hat\lambda_R}{1+(k_4\bar P)^{m_4}},
\end{aligned}
\end{align}
with gains $k_i>0$, Hill exponents $m_i\ge 1$, and open-loop (baseline) values
$\hat p_i\coloneqq p_i(0)$, $\hat\lambda_P\coloneqq \lambda_P(0)$, $\hat\lambda_R\coloneqq \lambda_R(0)$.

We require $p_1,p_2\in[0,1]$ and $p_1(\bar W)+p_2(\bar W)\le 1$ for all $\bar W\ge 0$
so that $p_3(\bar W)\coloneqq 1-p_1(\bar W)-p_2(\bar W)\in[0,1]$.

The TD death rate $\delta(x)$ is assumed bounded and nonnegative. We distinguish two regimes:
\begin{enumerate}[label=(\roman*), leftmargin=2.2em]
\item \textbf{General bounded death $\delta(x)$:} used to establish global well-posedness and exact balance laws for totals (Theorem~\ref{thm:balance-laws}).
\item \textbf{Uniform death $\delta(x)\equiv\delta$:} 
we consider the specific regime where environmental attrition dominates intrinsic mortality. 
\end{enumerate}

Uniform TD mortality is used here as a structural identifiability condition for exact closure, not as a claim that real tissues have perfectly damage-independent death.
Exact closure on the two totals is guaranteed when $\delta(x)\equiv\delta$, because then the mortality flux reduces to $\delta \bar W(t)$. More generally, any regime in which the mortality flux depends only on $\bar W(t)$ would also yield closure, but constant mortality is the canonical structurally identifiable case considered here.
Therefore, the uniform mortality allows us to establish a baseline `neutral' model against which age-dependent mortality can be compared.
In that sense, “uniform death” is identifiable at the level of aggregate time series: failure of the closed balance laws is itself a diagnostic of non-uniform mortality.
Figure~\ref{fig:mechanisms} illustrates damage accumulation and inheritance mechanics and shows the feedback regulation for self-renewal and differentiation probabilities. 

\begin{figure*}[htbp]
 \centering
 \includegraphics[width=\textwidth]{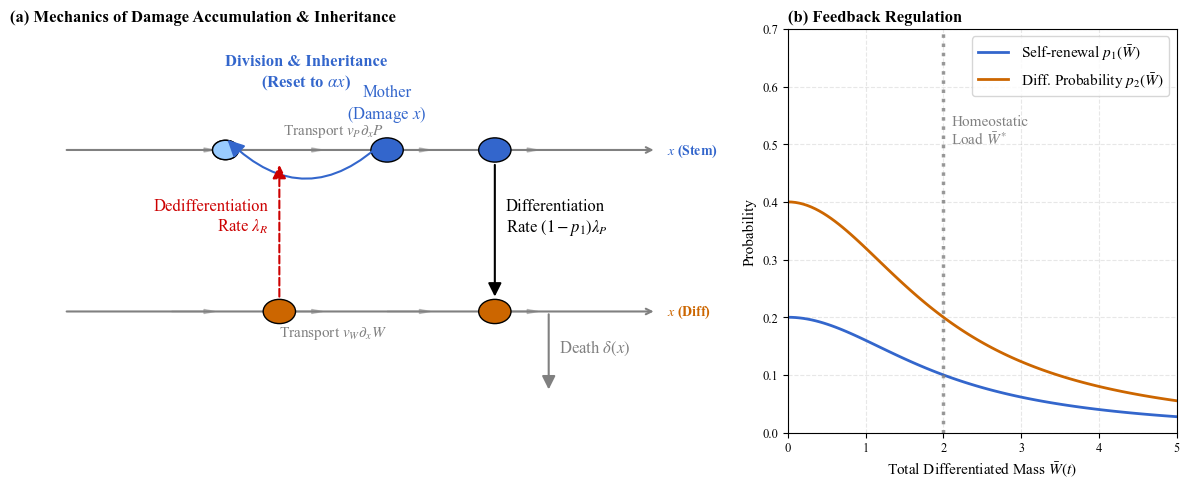} 
 \caption{\textbf{Mechanisms of damage dynamics and regulation.} 
 \textbf{(a) Conveyor-belt dynamics:} Damage accumulates via transport (gray arrows). Division (blue arc) resets damage state ($\alpha x < x$), while dedifferentiation (red) couples the compartments.
 \textbf{(b) Feedback logic:} Hill-type regulation of self-renewal ($p_1$) and differentiation ($p_2$) probabilities stabilizes the tissue burden $\bar{W}$ at the homeostatic load $\bar{W}^*$.}
 \label{fig:mechanisms}
\end{figure*}

For the analysis below we adopt the following standing assumptions.
\begin{assumption}\label{ass:basic}
\begin{enumerate}[label=(H\arabic*), leftmargin=2.6em]
\item \label{H1}
The maps $p_1,p_2,\lambda_P,\lambda_R$ are bounded and globally Lipschitz on $[0,\infty)$
(e.g.\ the Hill families in \eqref{eq:Hill_maps} satisfy this).
\item \label{H2}
$\delta\in L^\infty([0,\infty))$ is measurable and nonnegative.
\item \label{H3}
$P_0,W_0\in L^1_+([0,\infty))$.
\end{enumerate}
\end{assumption}

\subsection{Well-posedness setting and weak/mild solutions}
\label{subsec:wellposedness_setting}
Fix $T>0$ and define the state space
\begin{equation}\label{eq:XT_def}
\mathcal X_T \coloneqq C\!\big([0,T];L^1([0,\infty))\times L^1([0,\infty))\big),
\qquad
\|(P,W)\|_{\mathcal X_T} \coloneqq \sup_{t\in[0,T]}\big(\|P(t,\cdot)\|_1+\|W(t,\cdot)\|_1\big),
\end{equation}
with $\|\cdot\|_1=\|\cdot\|_{L^1([0,\infty))}$. Under Assumption~\ref{ass:basic},
system \eqref{eq:PDE_model} can be formulated in mild form along characteristics. Let
\[
X_P(s;t,x)=x-v_P(t-s),\qquad
X_W(s;t,x)=x-v_W(t-s),\qquad 0\le s\le t,
\]
and define integrating factors
\[
\Lambda_P(s;t)=\int_s^t \lambda_P(\bar W(\tau))\,d\tau,\qquad
\Lambda_W(s;t,x)=\int_s^t\big(\delta(X_W(\tau;t,x))+\lambda_R(\bar P(\tau))\big)\,d\tau.
\]
A mild solution $(P,W)\in\mathcal X_T$ is then a fixed point of the Picard map obtained by inserting
\eqref{eq:birth_ops} into the characteristic representations (cf.\ standard semigroup/characteristic
constructions for transport equations with inflow boundary conditions \citep{perthame_introduction_2023}).
The global Lipschitz property in \ref{H1} yields a contraction for small $T$; positivity is preserved by the nonnegativity of kernels and rates; and an a priori $L^1$-mass bound provides continuation to all times.
We therefore obtain a unique global nonnegative solution in
$\mathcal X_\infty := C([0,\infty);L^1\times L^1)$, together with exact balance laws for
$\bar P$ and $\bar W$ (Theorem~\ref{thm:balance-laws}).

\section{Balance laws and exact reduction}
\label{sec:balance_reduction}

\subsection{Balance laws for totals (general \texorpdfstring{$\delta(x)$}{delta(x)})}
\label{subsec:balance_general_delta}
We first derive exact balance laws for the total masses $\bar P(t)$ and $\bar W(t)$ defined in
\eqref{eq:totals_def}, under a general bounded, nonnegative death profile $\delta(x)$.

\begin{theorem}[Balance laws for totals]
\label{thm:balance-laws}
Assume Assumption~\ref{ass:basic}. Then the PDE system \eqref{eq:PDE_model}--\eqref{eq:IC}
admits a unique global nonnegative solution
$(P,W)\in C([0,\infty);L^1([0,\infty))\times L^1([0,\infty)))$.
Moreover, the total masses satisfy, for a.e.\ $t>0$,
\begin{equation}\label{eq:balance_laws_general}
\begin{cases}
\displaystyle
\frac{d\bar P}{dt}
=\Big(p_1(\bar W)-p_2(\bar W)\Big)\lambda_P(\bar W)\,\bar P
+\lambda_R(\bar P)\,\bar W,\\[8pt]
\displaystyle
\frac{d\bar W}{dt}
=\Big(1-p_1(\bar W)+p_2(\bar W)\Big)\lambda_P(\bar W)\,\bar P
-\lambda_R(\bar P)\,\bar W
-\int_0^\infty \delta(x)\,W(t,x)\,dx,
\end{cases}
\end{equation}
with initial conditions $\bar P(0)=\|P_0\|_{1}$ and $\bar W(0)=\|W_0\|_{1}$.
\end{theorem}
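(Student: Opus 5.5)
The proof has two parts: global well-posedness via a fixed-point argument along characteristics, and then the balance laws, obtained by integrating the equations in $x$.

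\textbf{Local existence.} Fix $T>0$. Write the mild (Duhamel) formulation along characteristics, using the integrating factors $\Lambda_P,\Lambda_W$ from Section~\ref{subsec:wellposedness_setting}. For $x\ge v_P t$ the $P$-equation gives
\[
P(t,x)=P_0(x-v_Pt)e^{-\Lambda_P(0;t)}+\int_0^t e^{-\Lambda_P(s;t)}\big(\mathcal B_P[P]+\lambda_R(\bar P)W\big)(s,X_P(s;t,x))\,ds .
\]
For $x<v_Pt$ the characteristic starts on the boundary, and the inflow condition \eqref{eq:inflow_bc} replaces the initial-data term by zero, with the integral starting at the entry time. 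The $W$ representation is analogous. Call the resulting Picard map $\Phi$ on a closed ball of $\mathcal X_T$.

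\begin{itemize}
\item Each dilation $P\mapsto \alpha^{-1}P(\cdot/\alpha)$ is an $L^1$ isometry.
\item Translation along characteristics does not increase the $L^1$ norm.
\item The coefficients are bounded and globally Lipschitz in the totals by \ref{H1}, and $|\bar P-\bar P'|\le\|P-P'\|_1$.
\end{itemize}

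Together these give $\|\Phi(u)-\Phi(u')\|_{\mathcal X_T}\le C\,T\,\|u-u'\|_{\mathcal X_T}$, where $C$ depends only on the sup norms and Lipschitz constants of $p_i,\lambda_P,\lambda_R$, on $\|\delta\|_\infty$, and on the ball radius. Banach's fixed point theorem then gives a unique local mild solution. Continuity in time with values in $L^1$ follows from continuity of translations in $L^1$.

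\textbf{Positivity and global existence.} Positivity holds because $\Phi$ maps nonnegative functions to nonnegative ones: all kernels, rates and probabilities are nonnegative and the integrating factors are positive. The fixed point is a limit of nonnegative iterates, so it is nonnegative. For continuation, add the two balance laws (derived below, on the local interval). Since $p_1+p_2+p_3=1$, the total is governed by
\[
\frac{d}{dt}(\bar P+\bar W)=2\lambda_P\bar P-\int\delta W\le 2\|\lambda_P\|_\infty(\bar P+\bar W),
\]
so Gronwall gives a bound $e^{2\hat\lambda_P t}$ on the total mass. This bound does not blow up in finite time. The local existence time depends only on the $L^1$ bound, because the constant $C$ above is uniform on balls. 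We therefore extend step by step to $[0,\infty)$, and uniqueness propagates along the way.

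\textbf{Balance laws.} First take data in $C_c^1\cap L^1$ with compatible boundary values, for which the solution is classical, or work with the weak formulation. Integrate each equation over $x\in(0,\infty)$.
\begin{itemize}
\item The transport term gives $v_P[P]_{0}^{\infty}=0$, by the inflow condition and $L^1$ decay at infinity.
\item The birth operators integrate by the substitution $y=x/\alpha$: $\int\alpha^{-1}P(x/\alpha)\,dx=\bar P$. Hence
\[
\int\mathcal B_P=(2p_1+p_3)\lambda_P\bar P=(1+p_1-p_2)\lambda_P\bar P,\qquad \int\mathcal B_W=(2p_2+p_3)\lambda_P\bar P=(1-p_1+p_2)\lambda_P\bar P.
\]
\end{itemize}
Subtracting the loss term $\lambda_P\bar P$ from the first yields $(p_1-p_2)\lambda_P\bar P$. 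The dedifferentiation terms $\pm\lambda_R\bar W$ come out directly, and the death term stays as $\int\delta W$. The conservation constraint \eqref{eq:inherit_cons} is not needed here; only the Jacobian factor $1/\alpha$ matters.

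\textbf{Main obstacle.} The delicate point is justifying this computation for general $L^1$ mild solutions. The transport term need not be a pointwise derivative, and we only have $\bar P\in C$ a priori, not absolutely continuous. I would handle this in one of two ways.
\begin{enumerate}
\item Integrate the mild formula directly in $x$. By Fubini, the total mass of the Duhamel representation becomes an integral identity $\bar P(t)=\bar P(0)+\int_0^t(\cdots)\,ds$. The integrand is locally bounded, which gives absolute continuity and the ODE for a.e.\ $t$.
\item Approximate the data by smooth data, use the continuous dependence given by the contraction estimate, and pass to the limit in the integrated identities.
\end{enumerate}
The first route also cleanly handles the boundary characteristics. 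For $x<v_Pt$ the mild formula carries no boundary contribution, which is exactly the statement that no mass enters through $x=0$.
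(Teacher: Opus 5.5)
Your proposal is correct and has the same overall architecture as the paper. Both use a Picard contraction on the characteristic (mild) formulation with the losses in the integrating factors, positivity from nonnegative kernels, continuation by an a priori $L^1$ bound, and balance laws obtained by integrating in $x$ using $\int_0^\infty\alpha^{-1}f(x/\alpha)\,dx=\int_0^\infty f$ and zero inflow flux.

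The real difference is how the integration step is justified. The paper tests the weak formulation against cutoffs $\psi_R$ and lets $R\to\infty$. You instead integrate the Duhamel formula in $x$ by Fubini. Your route works directly with the object produced by the fixed point, needs no boundary traces or mild-to-weak equivalence, and gives $\bar P,\bar W\in C^1$, so the laws hold for every $t$. For $P$ it is immediate, because $\Lambda_P(s;t)$ does not depend on $x$:
\[
\bar P(t)=e^{-\Lambda_P(0;t)}\bar P(0)+\int_0^t e^{-\Lambda_P(s;t)}\Big[(1+p_1-p_2)\lambda_P(\bar W)\bar P+\lambda_R(\bar P)\bar W\Big](s)\,ds .
\]
For $W$, however, $\Lambda_W(s;t,x)$ contains $\int_s^t\delta(X_W(\tau;t,x))\,d\tau$, so integrating the $W$-formula in $x$ does not close by itself. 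You need one of two extra steps:
\begin{itemize}
\item Rewrite the mild $W$-equation with $-\delta W$ (or all reaction terms) moved into the source. This is legitimate because multiplication by $\delta\in L^\infty$ is a bounded operator on $L^1$.
\item Expand $e^{-\Lambda_W}=1-\int(\delta+\lambda_R)e^{-\Lambda_W}$ along each characteristic before applying Fubini.
\end{itemize}
The paper's cutoff argument avoids this, since $\int\delta W\psi_R\to\int\delta W$ by dominated convergence. Likewise, your smooth-data alternative does not give classical solutions when $\delta$ is only $L^\infty$ unless you also mollify $\delta$.

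There is also one arithmetic slip. Adding the two balance laws gives $\frac{d}{dt}(\bar P+\bar W)=\lambda_P(\bar W)\bar P-\int_0^\infty\delta W\,dx$, not $2\lambda_P\bar P-\int\delta W$. You counted the birth mass $\int(\mathcal B_P+\mathcal B_W)=2\lambda_P\bar P$ but dropped the loss $-\lambda_P\bar P$ from the $P$-equation. Your expression is still an upper bound, so the Gronwall argument closes anyway, just with rate $2\hat\lambda_P$ instead of the paper's $\hat\lambda_P$.
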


\begin{proof}[Proof sketch (derivation of \eqref{eq:balance_laws_general})]
We outline the formal computation; the rigorous argument follows by testing the weak formulation
against cutoffs $\psi_R(x)$ and passing $R\to\infty$, where $\psi_R \equiv 1$ on $[0,R]$, $\operatorname{supp} \psi_R \subset [0,R+1]$ and $\lVert\psi_R' \rVert_{\infty}\le C$ with $C$ independent of $R$ (cf.\ standard transport weak formulations).

Integrate the $P$-equation in \eqref{eq:PDE_model} over $x\in(0,\infty)$:
\[
\frac{d}{dt}\int_0^\infty P\,dx
+\int_0^\infty v_P\partial_x P\,dx
=\int_0^\infty \mathcal B_P[P]\,dx
-\lambda_P(\bar W)\int_0^\infty P\,dx
+\lambda_R(\bar P)\int_0^\infty W\,dx.
\]
The transport term gives the boundary flux
$\int_0^\infty v_P\partial_x P\,dx = v_P P(t,\infty)-v_P P(t,0)$.
Under integrability and the inflow boundary condition \eqref{eq:inflow_bc}, this term vanishes:
$P(t,\infty)=0$ (in the sense of $L^1$ tails) and $P(t,0)=0$ (in the sense of traces for $L^1$ solutions to transport equations).
An identical computation holds for the $W$-equation.

It remains to compute $\int \mathcal B_P[P]\,dx$ and $\int \mathcal B_W[P]\,dx$.
Using the change of variables $y=x/\alpha$,
\[
\int_0^\infty \frac{1}{\alpha}P\!\left(t,\frac{x}{\alpha}\right)\,dx
=\int_0^\infty P(t,y)\,dy=\bar P(t),
\]
and similarly for $\alpha\in\{\alpha_i,\beta_i,\gamma_i\}$, together with $p_1+p_2+p_3=1$,
one obtains
\[
\int_0^\infty \mathcal B_P[P]\,dx=(2p_1(\bar W)+p_3(\bar W))\lambda_P(\bar W)\bar P
=\big(1+p_1(\bar W)-p_2(\bar W)\big)\lambda_P(\bar W)\bar P,
\]
\[
\int_0^\infty \mathcal B_W[P]\,dx=(2p_2(\bar W)+p_3(\bar W))\lambda_P(\bar W)\bar P
=\big(1-p_1(\bar W)+p_2(\bar W)\big)\lambda_P(\bar W)\bar P.
\]
Substituting these identities yields \eqref{eq:balance_laws_general}.
\end{proof}

\paragraph{Why the inflow boundary removes flux terms}
Because $v_P,v_W>0$, characteristics enter the half-line through $x=0$ (an inflow boundary).
The condition \eqref{eq:inflow_bc} enforces zero boundary trace at $x=0$,
so the integrated transport flux produces no boundary contribution in the balance laws
(after justification via cutoff testing and $R\to\infty$).

\subsection{Exact two-dimensional closure under uniform death}
\label{subsec:closure_uniform_delta}
The balance laws \eqref{eq:balance_laws_general} are not closed in general, because
the mortality term depends on the \emph{distribution} $W(t,\cdot)$ through
$\int_0^\infty \delta(x)W(t,x)\,dx$. A key structural simplification occurs when the
TD death rate is constant.

\begin{corollary}[Exact two-compartment closure for $\delta(x)\equiv\delta$]
\label{cor:uniform-death-closure}
If $\delta(x)\equiv\delta>0$ is constant, then
$\int_0^\infty \delta(x)W(t,x)\,dx=\delta\,\bar W(t)$ and the balance laws close exactly to the planar ODE system
\begin{align}\label{eq:reduced_balance}
\begin{aligned}
&\begin{cases}
\displaystyle
\dot{\bar P}
=\Big(p_1(\bar W)-p_2(\bar W)\Big)\lambda_P(\bar W)\,\bar P
+\lambda_R(\bar P)\,\bar W,\\[8pt]
\displaystyle
\dot{\bar W}
=\Big(1-p_1(\bar W)+p_2(\bar W)\Big)\lambda_P(\bar W)\,\bar P
-\big(\delta+\lambda_R(\bar P)\big)\,\bar W,
\end{cases}
 \\[6pt] 
 &(\bar P(0),\bar W(0))=(\|P_0\|_1,\|W_0\|_1).
 \end{aligned}
\end{align}
The closed quadrant $\{\bar P\ge 0,\ \bar W\ge 0\}$ is forward invariant, and solutions are global.
\end{corollary}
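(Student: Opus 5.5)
The plan is to obtain the corollary directly from Theorem~\ref{thm:balance-laws} and then treat the resulting planar system as an ODE in its own right. Forward invariance and global existence will follow from positivity and a linear growth bound.

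\textbf{Step 1 (closure).} Set $\delta(x)\equiv\delta$. Since $W(t,\cdot)\in L^1$, we have $\int_0^\infty \delta W(t,x)\,dx=\delta\bar W(t)$. Substituting this into \eqref{eq:balance_laws_general} gives \eqref{eq:reduced_balance} for a.e.\ $t$. The totals $t\mapsto(\bar P(t),\bar W(t))$ are continuous, because $(P,W)\in C([0,\infty);L^1\times L^1)$ and $\bar P=\|P(t)\|_1$ by nonnegativity. The right-hand side of \eqref{eq:reduced_balance} is continuous in $(\bar P,\bar W)$ by \ref{H1}. Hence the integrated form of the balance laws upgrades the a.e.\ identity to a classical $C^1$ solution. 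The initial data $(\|P_0\|_1,\|W_0\|_1)$ are inherited from the theorem.

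\textbf{Step 2 (well-posedness of the planar ODE).} The vector field $F(\bar P,\bar W)$ is a sum of products of bounded, globally Lipschitz maps (\ref{H1}) with linear terms. It is therefore locally Lipschitz on the closed quadrant, and I would extend it, e.g.\ by evaluating the feedback maps at $|\cdot|$, to a locally Lipschitz field on $\mathbb R^2$. Picard--Lindel\"of then gives local uniqueness. In particular, the totals produced by the PDE coincide with the unique ODE solution, so the closure is exact rather than merely consistent.

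\textbf{Step 3 (invariance and globality).} This is the step I expect to need the most care. For forward invariance, I would check the quasi-positivity conditions directly:
\begin{itemize}
\item On $\{\bar P=0,\ \bar W\ge0\}$ we have $\dot{\bar P}=\lambda_R(0)\bar W\ge0$.
\item On $\{\bar W=0,\ \bar P\ge0\}$ we have $\dot{\bar W}=(1-p_1+p_2)\lambda_P\bar P\ge0$, since $p_1\le1$.
\end{itemize}
The standard tangency criterion for locally Lipschitz quasi-positive fields then yields invariance of the quadrant. Alternatively, invariance is inherited from the nonnegativity of the PDE solution in Theorem~\ref{thm:balance-laws}.

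For globality, I would bound the total mass $N=\bar P+\bar W$. Summing the two equations gives
\[
\dot N = 2\lambda_P(\bar W)\bar P-\delta\bar W\le 2\|\lambda_P\|_\infty N .
\]
Gronwall's inequality then gives $N(t)\le N(0)e^{2\|\lambda_P\|_\infty t}$. Since both components are nonnegative, solutions cannot blow up in finite time, so the maximal existence interval is $[0,\infty)$.

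The only subtle points are the regularity upgrade in Step 1 and the use of a locally Lipschitz extension so that the invariance criterion applies on the boundary of the quadrant. Both are routine.
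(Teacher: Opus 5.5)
Your proposal is correct and follows the same route as the paper's sketch: closure by substituting $\int_0^\infty\delta W\,dx=\delta\bar W$, forward invariance from the sign of the vector field on the two boundary rays, and global existence by Gr\"onwall on $\bar P+\bar W$. You add care the paper skips (the $C^1$ upgrade, uniqueness via Picard--Lindel\"of, and a Lipschitz extension for the tangency criterion), and there is one arithmetic slip: summing the two equations gives $\dot N=\lambda_P(\bar W)\bar P-\delta\bar W$, not $2\lambda_P(\bar W)\bar P-\delta\bar W$, because the mother-cell loss $-\lambda_P\bar P$ cancels one of the two daughters; since your expression only overestimates $\dot N$, the bound $\dot N\le 2\|\lambda_P\|_\infty N$ and the conclusion still hold.
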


\begin{proof}[Proof sketch]
Closure is immediate from $\int_0^\infty \delta W\,dx=\delta\bar W$.
Forward invariance follows from evaluating the vector field on the boundary:
if $\bar P=0$ and $\bar W\ge 0$, then $\dot{\bar P}=\lambda_R(0)\bar W\ge 0$;
if $\bar W=0$ and $\bar P\ge 0$, then $\dot{\bar W}=(1-p_1(0)+p_2(0))\lambda_P(0)\bar P\ge 0$.
For global existence, set $M(t):=\bar P(t)+\bar W(t)$ and compute from \eqref{eq:reduced_balance}
\begin{equation}\label{eq:M_balance}
\dot M(t)=\lambda_P(\bar W(t))\,\bar P(t)-\delta\,\bar W(t)
\le \hat\lambda_P\,M(t),
\end{equation}
where $\hat\lambda_P=\lambda_P(0)$ and we used $\lambda_P(\cdot)\le\hat\lambda_P$ and $\bar P\le M$.
Gronwall's inequality yields $M(t)\le M(0)e^{\hat\lambda_P t}$ for all $t\ge 0$,
precluding finite-time blow-up and implying global existence.
\end{proof}

\subsection{What breaks when \texorpdfstring{$\delta(x)$}{delta(x)} is non-constant}
\label{subsec:nonconstant_delta_breaks}
When $\delta(x)$ varies with damage, the mortality term in \eqref{eq:balance_laws_general} becomes
\[
\int_0^\infty \delta(x)\,W(t,x)\,dx,
\]
which depends on the full distribution $W(t,\cdot)$ rather than the single scalar $\bar W(t)$.
Consequently, the pair $(\bar P,\bar W)$ does not form a closed dynamical system in general:
additional moments (e.g.\ $\int \delta(x)W\,dx$, or other weighted integrals) are required, and
any finite-dimensional closure would typically be approximate. 
Such approximate closures are common in hybrid and stochastic multiscale models, where deterministic mean-field limits are obtained by closing higher-order correlations (e.g.\ first-moment closure) to yield tractable PDE surrogates \citep{wang_analysis_2025}.
Figure~\ref{fig:closure_condition} compares the uniform death rate and variable death rate scenarios. It provides the mechanistic contrast: when $\delta(x)\equiv\delta$, mortality depends only on total mass and the PDE totals obey the closed ODE exactly; when $\delta(x)$ varies with damage, the mortality flux depends on the shape of $W(t,\cdot)$, breaking closure and requiring additional moments.

\begin{figure*}[htbp]
 \centering
 \includegraphics[width=\textwidth]{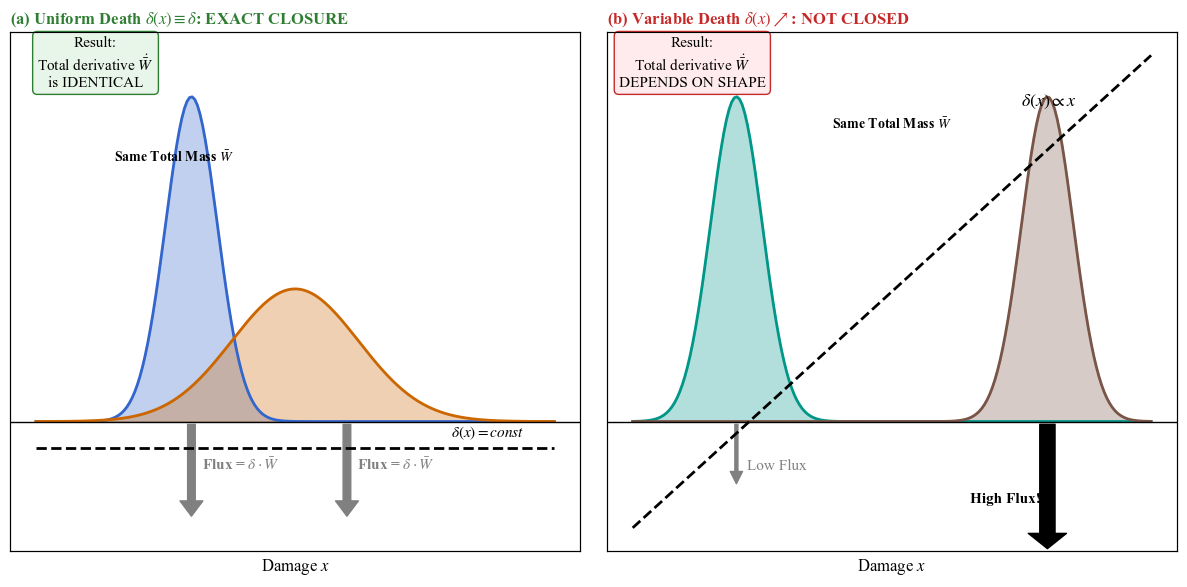} 
 \caption{\textbf{Why uniform mortality enables exact dimensional reduction.}
 \textbf{(a) Uniform death ($\delta(x) \equiv \delta$):} Mortality flux depends only on total mass (area), ensuring exact closure regardless of distribution shape.
 \textbf{(b) Variable death ($\delta(x) \nearrow$):} Mortality flux depends on the distribution's shape (e.g., age structure), breaking the ODE closure. 
 This figure also serves as the mechanistic counterfactual: any observed deviation of total-mass trajectories from \eqref{eq:reduced_balance} (beyond numerical error) indicates a non-constant effective mortality and hence failure of exact closure.}
 \label{fig:closure_condition}
\end{figure*}

\begin{remark}[Uniform death as an identifiability/closure condition (not a biological claim)]
The specialization $\delta(x)\equiv\delta$ should be read as a structural identifiability condition for exact closure, not as a literal assertion that TD mortality is damage-independent in vivo. The balance laws in Theorem~\ref{thm:balance-laws} show that the only obstruction to a closed two-dimensional dynamics on totals $(\bar P,\bar W)$ is the mortality flux $\int_0^\infty \delta(x)W(t,x)\,dx$. Exact closure on $(\bar P,\bar W)$ occurs precisely when this flux can be written as a function of $\bar W(t)$ alone, i.e. when $\int_0^\infty \delta(x)W(t,x)\,dx=\delta\,\bar W(t)$, yielding the planar ODE system in Corollary~\ref{cor:uniform-death-closure}. Conversely, when $\delta(x)$ varies with damage, the flux depends on the shape of $W(t,\cdot)$ rather than its total mass, breaking the reduction (Fig.\ref{fig:closure_condition}) and implying that any low-dimensional closure must include additional distributional information (e.g. weighted moments). Importantly, this condition is identifiable at the level of totals: if aggregate trajectories $(\bar P(t),\bar W(t))$ fail to satisfy the closed balance laws \eqref{eq:reduced_balance} beyond discretization error (Section~\ref{subsubsec:balance_residual}, Fig.~\ref{fig:balance_residual}), then the uniform-death closure is empirically falsified for that regime.
\end{remark}

This observation motivates the uniform-death assumption $\delta(x)\equiv\delta$ as a
structural hypothesis under which the PDE-to-ODE reduction is exact and analytically tractable.
Importantly, uniform death is also identifiable at the level of totals: it is precisely the
condition under which the mortality flux can be written as $\delta\,\bar W(t)$, yielding a closed
two-dimensional system whose equilibrium laws and global dynamics can be tested against
time-series measurements of aggregate stem/TD abundances.

\section{Replicator mapping and game-theoretic interpretation}
\label{sec:replicator_game}
In this section we show that the exact two-compartment closure obtained under uniform TD death
(Corollary~\ref{cor:uniform-death-closure}) induces a replicator-type frequency dynamics.
This provides an organizing interpretation in which strategies correspond to phenotypes
(stem vs.\ TD), and payoffs are the phenotypes' per-capita net growth rates.
Importantly, the game-theoretic structure here is not postulated: it is derived mechanistically from
the PDE bookkeeping via the exact reduction. 
The induced game is state dependent: the ``payoffs'' are not constants from a payoff matrix, but per-capita growth rates generated by the underlying feedback-regulated lineage dynamics.
Replicator dynamics are standard in evolutionary game
theory and population dynamics \citep{schuster_replicator_1983,hofbauer_evolutionary_1998,cressman_replicator_2014}.

\subsection{Frequency dynamics}
\label{subsec:frequency_dynamics}
Assume $\delta(x)\equiv\delta>0$ so that the totals $(\bar P,\bar W)$ satisfy the closed ODE system
\eqref{eq:reduced_balance} on the forward-invariant quadrant $\bar P,\bar W\ge 0$.
For interior states $\bar P,\bar W>0$, define the per-capita net growth rates
\begin{equation}\label{eq:percap_payoffs}
F_P(\bar P,\bar W)\coloneqq \frac{\dot{\bar P}}{\bar P},\qquad
F_W(\bar P,\bar W)\coloneqq \frac{\dot{\bar W}}{\bar W}.
\end{equation}
These quantities play the role of state-dependent payoffs in the induced two-strategy game.
  
Let $M(t):=\bar P(t)+\bar W(t)$ denote the total mass and define the stem frequency
\begin{equation}\label{eq:stem_frequency}
s(t):=\frac{\bar P(t)}{M(t)}\in(0,1),\qquad 1-s(t)=\frac{\bar W(t)}{M(t)}.
\end{equation}
A direct computation yields the replicator form
\begin{equation}\label{eq:replicator_form}
\dot s
=\frac{\dot{\bar P}}{M}-\frac{\bar P}{M^2}\dot M
=\frac{\bar P}{M}\Big(1-\frac{\bar P}{M}\Big)\Big(\frac{\dot{\bar P}}{\bar P}-\frac{\dot{\bar W}}{\bar W}\Big)
=s(1-s)\big(F_P-F_W\big),
\end{equation}
which is the familiar two-strategy replicator equation with payoffs given by per-capita growth
\citep{hofbauer_evolutionary_1998,cressman_replicator_2014}.
Equation \eqref{eq:replicator_form} shows that the sign of $F_P-F_W$ determines the direction of
selection on stemness: if stem cells have higher per-capita net growth ($F_P>F_W$), then $s$
increases; otherwise $s$ decreases.

\subsection{Nash/homeostasis equivalence}
\label{subsec:nash_homeostasis}
For two-strategy replicator dynamics, interior rest points correspond to payoff equalization across
strategies in support \citep{hofbauer_evolutionary_1998}. In our setting this equalization
has a concrete biological meaning: at homeostasis, stem and TD phenotypes have identical per-capita
net growth, so neither phenotype has a selective advantage in the aggregated (well-mixed) dynamics.
In \eqref{eq:percap_payoffs}, we interpret $F_P$, $F_W$ as state-dependent payoffs of an induced two-strategy game.

\begin{theorem}[Interior equilibrium $\Longleftrightarrow$ payoff equalization]
\label{thm:nash_equiv}

Assume $\delta(x)\equiv\delta>0$ and let $(\bar P,\bar W)\in(0,\infty)^2$. Then,
\begin{enumerate}[label=(\roman*)]
\item $(\bar P, \bar W)$ is an equilibrium if and only if the per-capita rates are equal and the common per-capita rate vanishes; equivalently,
\[
(\bar P, \bar W) \text{ is an equilibrium} \quad \Longleftrightarrow \quad F_P(\bar P,\bar W) = F_W(\bar P,\bar W)= 0.
\]
\item Separately, $F_P=F_W$ (on an interior state) is equivalent to the frequency $s = \bar P/(\bar P + \bar W)$ being a rest point of the replicator equation $\dot s = s(1-s)(F_P-F_W)$. 
\end{enumerate}
\end{theorem}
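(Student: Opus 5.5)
The plan is to argue directly from the definitions in \eqref{eq:percap_payoffs} and the replicator identity \eqref{eq:replicator_form}. Throughout, write $f(\bar P,\bar W)$ and $g(\bar P,\bar W)$ for the right-hand sides of the closed system \eqref{eq:reduced_balance} from Corollary~\ref{cor:uniform-death-closure}. On the open quadrant $(0,\infty)^2$ we then have $F_P=f/\bar P$ and $F_W=g/\bar W$. These are well-defined, continuous functions of the state, because the feedback maps are bounded and Lipschitz by \ref{H1}. The key observation is that on the interior, division by the positive quantities $\bar P$ and $\bar W$ is invertible, so vanishing of the vector field is equivalent to vanishing of the per-capita rates.

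For part (i), I would run the two implications separately. Suppose $(\bar P,\bar W)$ is an equilibrium, so $f=g=0$. Dividing by $\bar P>0$ and $\bar W>0$ gives $F_P=F_W=0$; in particular the two rates are equal and their common value is zero. Conversely, suppose $F_P=F_W=0$. Multiplying back by $\bar P$ and $\bar W$ gives $f=g=0$, so the point is an equilibrium. I would add a remark that equality $F_P=F_W$ alone is not enough for this: one also needs the common value to vanish.

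That common-value condition can be read through the total mass. From \eqref{eq:M_balance}, $\dot M = \bar P F_P + \bar W F_W$, so on the set $F_P=F_W=c$ we get $\dot M = cM$. Hence, given equalization, the vanishing of $c$ is exactly the condition $\dot M=0$. This is the only point where some care is needed, since it separates ``Nash'' equalization from genuine homeostasis.

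For part (ii), I would use the computation \eqref{eq:replicator_form}, with $s=\bar P/M\in(0,1)$ on interior states. Then $s(1-s)>0$, so $\dot s=0$ if and only if $F_P-F_W=0$. Two points make this rigorous:
\begin{itemize}
\item I would check that the middle equality in \eqref{eq:replicator_form} is valid, by expanding $\dot M=\dot{\bar P}+\dot{\bar W}$ and simplifying $\dot{\bar P}/M-\bar P\dot M/M^2 = (\bar W\dot{\bar P}-\bar P\dot{\bar W})/M^2$.
\item I would interpret ``rest point of the replicator equation'' as a zero of the right-hand side evaluated at the current state. The payoffs are state-dependent rather than functions of $s$ alone, so the replicator equation is not autonomous in $s$; this interpretation should be stated explicitly.
\end{itemize}

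The main obstacle is conceptual rather than computational. One has to state clearly that (ii) characterizes payoff equalization, which defines a curve of states, while (i) picks out the isolated points on that curve where the common payoff is zero. I would close by noting that interior rest points of the replicator dynamics with equalized payoffs are Nash equilibria of the induced two-strategy game in the standard sense \citep{hofbauer_evolutionary_1998}.
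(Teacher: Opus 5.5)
Your proposal is correct and follows the paper's proof essentially verbatim. Part (i) divides and multiplies the vanishing vector field by the positive totals, with the same remark that $F_P=F_W$ alone only yields a common rate $c$, and part (ii) reads off $\dot s=0 \iff F_P=F_W$ from \eqref{eq:replicator_form} because $s(1-s)>0$ on the interior. Two small quibbles do not affect the argument: the identity $\dot M=\bar P F_P+\bar W F_W$ follows directly from $\dot M=\dot{\bar P}+\dot{\bar W}$ rather than from \eqref{eq:M_balance}, and your closing claim that equilibria are \emph{isolated} points on the equalization set is not part of the theorem and would need an extra hypothesis such as the slope condition of Theorem~\ref{thm:unique_equilibrium_strong}.
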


\begin{proof}
(i) If $(\bar P,\bar W)$ is an equilibrium then $\dot{\bar P}=\dot{\bar W}=0$. By the definitions
\[
F_P(\bar P,\bar W)=\frac{\dot{\bar P}}{\bar P},\qquad 
F_W(\bar P,\bar W)=\frac{\dot{\bar W}}{\bar W},
\]
we obtain $F_P(\bar P,\bar W)=F_W(\bar P,\bar W)=0$.

Conversely, if $F_P(\bar P,\bar W)=F_W(\bar P,\bar W)=0$ then
$\dot{\bar P}=F_P\bar P=0$ and $\dot{\bar W}=F_W\bar W=0$, so $(\bar P,\bar W)$ is an equilibrium.
(We remark that $F_P=F_W$ alone only implies the existence of a common per-capita rate $c$ with
$\dot{\bar P}=c\bar P,\ \dot{\bar W}=c\bar W$, and the additional requirement $c=0$ is necessary
for a fixed point.)

(ii) Let $M=\bar P+\bar W$ and $s=\bar P/M$. By \eqref{eq:replicator_form}, on the interior $s\in(0,1)$ we have $\dot s=0$ if and only if $F_P=F_W$, as claimed.
\end{proof}

In evolutionary game theory, a mixed Nash equilibrium equalizes the expected payoffs of strategies
in its support; otherwise a player could unilaterally switch to a higher-payoff strategy
\citep{hofbauer_evolutionary_1998}. Theorem~\ref{thm:nash_equiv} shows that in our model,
homeostasis is exactly such a payoff equalization condition, with payoffs derived from the
mechanistic division--differentiation--dedifferentiation--death bookkeeping.  

\subsection{Closed-form laws and experimental observables}
\label{subsec:closed_form_laws}
A key advantage of the present reduction is that payoff equalization can be rewritten as explicit
algebraic identities among measurable quantities. Under uniform death, the interior equilibrium
relations can be expressed in closed form.

\begin{theorem}[Homeostatic laws under uniform death]
\label{thm:equalization_ratio_laws}
Assume $\delta(x)\equiv\delta>0$. An interior equilibrium $(\bar P^*,\bar W^*)\in(0,\infty)^2$ of
\eqref{eq:reduced_balance} exists if and only if the following identities hold:
\begin{align}
\label{eq:equalization_law}
\big(p_2(\bar W^*)-p_1(\bar W^*)\big)\,\delta &= \lambda_R(\bar P^*),
\qquad\text{(\emph{Equalization law})}\\
\label{eq:ratio_law}
\frac{\bar P^*}{\bar W^*} &= \frac{\delta}{\lambda_P(\bar W^*)}.
\qquad\text{(\emph{Ratio law})}
\end{align}
At such an equilibrium, $F_P(\bar P^*,\bar W^*)=F_W(\bar P^*,\bar W^*)$ (payoff equalization),
and the induced frequency $s^*=\bar P^*/(\bar P^*+\bar W^*)$ is a rest point of
\eqref{eq:replicator_form}.
\end{theorem}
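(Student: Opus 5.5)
The argument is purely algebraic. I will show that the two equations $\dot{\bar P}=0$, $\dot{\bar W}=0$ of \eqref{eq:reduced_balance} on the open quadrant are equivalent to the pair \eqref{eq:equalization_law}--\eqref{eq:ratio_law}. The remaining assertions then follow from Theorem~\ref{thm:nash_equiv}. Throughout, abbreviate $p_i=p_i(\bar W^*)$, $\lambda_P=\lambda_P(\bar W^*)$ and $\lambda_R=\lambda_R(\bar P^*)$, with $\bar P^*,\bar W^*>0$.

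\textbf{Step 1: replace one equation by the total-mass equation.} Adding the two equations of \eqref{eq:reduced_balance} gives, as in \eqref{eq:M_balance},
\[
\dot M=\lambda_P\bar P-\delta\bar W .
\]
The system $\{\dot{\bar P}=0,\ \dot{\bar W}=0\}$ is therefore equivalent to $\{\dot{\bar P}=0,\ \dot M=0\}$. For $\bar W^*>0$, the condition $\dot M=0$ reads $\lambda_P\bar P^*=\delta\bar W^*$. Note that $\lambda_P>0$ is forced here, since $\delta\bar W^*>0$. Dividing by $\lambda_P\bar W^*$ gives exactly the Ratio law \eqref{eq:ratio_law}, and the argument reverses.

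\textbf{Step 2: Equalization law.} Assume Step 1 holds and substitute $\bar P^*=\delta\bar W^*/\lambda_P$ into $\dot{\bar P}=0$. This gives
\[
(p_1-p_2)\,\delta\,\bar W^*+\lambda_R\bar W^*=0 .
\]
Dividing by $\bar W^*>0$ yields $(p_2-p_1)\delta=\lambda_R$, which is \eqref{eq:equalization_law}. Conversely, if both laws hold, the Ratio law gives $\dot M=0$. Multiplying the Equalization law by $\bar W^*$ and using $\lambda_P\bar P^*=\delta\bar W^*$ recovers $\dot{\bar P}=0$. Hence $\dot{\bar W}=\dot M-\dot{\bar P}=0$, and $(\bar P^*,\bar W^*)$ is an equilibrium. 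I read ``exists iff the identities hold'' as ``a point of $(0,\infty)^2$ is an equilibrium iff it satisfies the two identities,'' so this biconditional is the whole content of the first assertion.

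\textbf{Step 3: game-theoretic conclusions.} At the equilibrium, Theorem~\ref{thm:nash_equiv}(i) gives $F_P=F_W=0$, so in particular the payoffs are equal. Theorem~\ref{thm:nash_equiv}(ii), or directly \eqref{eq:replicator_form}, then shows that $s^*\in(0,1)$ is a rest point of the replicator equation.

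There is no real analytical obstacle. The only care needed is bookkeeping: dividing only by the strictly positive quantities $\bar W^*$ and $\lambda_P$, and noting that the Equalization law implicitly forces $p_2\ge p_1$ at $\bar W^*$ because $\lambda_R\ge0$. The step most prone to error is the converse direction, where I must be sure both original equations, and not just their sum, are recovered; routing the argument through $\dot M$ handles this cleanly.
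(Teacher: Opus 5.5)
Your proof is correct, but it reaches the two laws by a different algebraic route than the paper, and the difference matters.

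The paper's argument has the following shape.
\begin{itemize}
\item It writes the two steady-state identities $\lambda_R\bar W^*=(p_2-p_1)\lambda_P\bar P^*$ and $(\delta+\lambda_R)\bar W^*=(1-p_1+p_2)\lambda_P\bar P^*$.
\item It divides the second by the first to obtain the Equalization law, then substitutes back to obtain the Ratio law.
\item It argues only the forward direction (equilibrium $\Rightarrow$ laws) and leaves the converse implicit.
\end{itemize}
The quotient step tacitly requires $\lambda_R(\bar P^*)\neq 0$. The back-substitution cancels $p_2(\bar W^*)-p_1(\bar W^*)$, which amounts to the same nondegeneracy condition. This is harmless for the Hill maps with $\hat\lambda_R>0$, but Assumption~\ref{ass:basic} alone does not guarantee it; the $\lambda_R\equiv 0$ comparison model in the case study is a setting where it fails.

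You instead add the two equations, using the total-mass identity $\dot M=\lambda_P\bar P-\delta\bar W$.
\begin{itemize}
\item This gives the Ratio law first.
\item You only divide by $\bar W^*>0$ and by $\lambda_P(\bar W^*)$, and you derive the positivity of the latter rather than assume it.
\item Since $(\dot{\bar P},\dot M)$ is an invertible linear recombination of $(\dot{\bar P},\dot{\bar W})$, the converse falls out of the same computation, so the ``if and only if'' is actually established.
\end{itemize}
The paper's quotient buys only brevity. Your version is the more complete and more general proof of the stated biconditional.
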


\begin{proof}
At equilibrium, $\dot{\bar P}=\dot{\bar W}=0$ in \eqref{eq:reduced_balance}. From $\dot{\bar P}=0$,
\[
\lambda_R(\bar P^*)\,\bar W^*=\big(p_2(\bar W^*)-p_1(\bar W^*)\big)\lambda_P(\bar W^*)\,\bar P^*.
\]
From $\dot{\bar W}=0$,
\[
\big(\delta+\lambda_R(\bar P^*)\big)\bar W^*
=\big(1-p_1(\bar W^*)+p_2(\bar W^*)\big)\lambda_P(\bar W^*)\,\bar P^*.
\]
Dividing the second identity by the first yields \eqref{eq:equalization_law}; substituting back
gives \eqref{eq:ratio_law}. The remaining statements follow from
Theorem~\ref{thm:nash_equiv}.
\end{proof}

Figure~\ref{fig:nash_geometry} illustrates the geometry of the homeostasis state, including the Nash equilibrium $(\bar P^*,\bar W^*)$ as the intersection of the Ratio law and the Equalization law (Theorem~\ref{thm:equalization_ratio_laws}), and the replicator dynamics with respect to the stem frequency $s$.

\begin{figure*}[htbp]
 \centering
 \includegraphics[width=\linewidth]{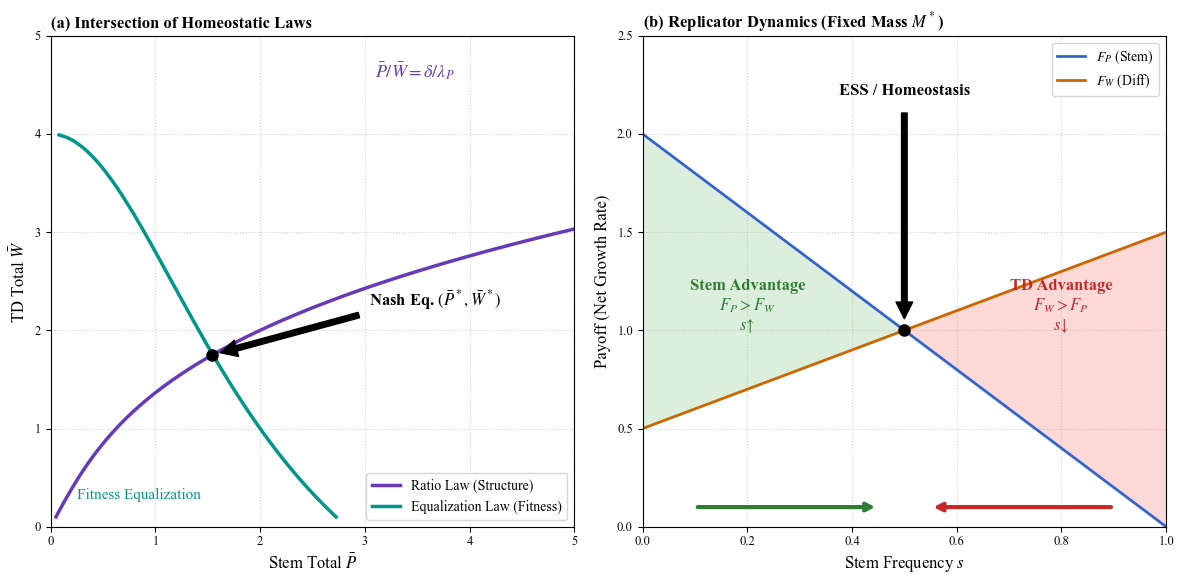} 
 \caption{\textbf{The geometry of homeostasis.}
 \textbf{(a) Intersection of laws:} The unique Nash equilibrium is determined by the intersection of the Ratio Law (purple) and Equalization Law (teal) curves.
 \textbf{(b) Replicator stability:} The Nash point is a stable interior rest point of the induced replicator dynamics in the parameter regime shown. Green/red zones indicate regions where stem/TD phenotypes have a fitness advantage ($F_P > F_W$ or $F_W > F_P$), driving the frequency $s$ toward equilibrium.}
 \label{fig:nash_geometry}
\end{figure*}

Equations \eqref{eq:equalization_law}--\eqref{eq:ratio_law} are algebraic relations among aggregate observables and effective rates at homeostasis:
\begin{enumerate}[label=(\roman*)]
\item $\bar P^*$ and $\bar W^*$ are steady-state compartment sizes, measurable by cell counting,
flow cytometry, or lineage-marker gating (stem vs.\ TD markers) \citep{vadakke-madathil_flow_2019,bose_pluripotent_2023}.
\item $\delta$ is the TD death rate; in steady conditions it can be estimated by apoptosis/necrosis
assays (e.g.\ Annexin V/PI) or survival analysis of labeled TD cohorts \citep{crowley_quantitation_2016,chen_estimating_2009}.
\item $\lambda_P(\bar W^*)$ is the effective stem division rate at TD load $\bar W^*$; it can be
estimated from proliferation readouts (EdU/BrdU incorporation, mitotic index, or time-lapse lineage
tracking) conditioned on the observed steady $\bar W^*$ \citep{huelsz-prince_mother_2022}.
\item $\lambda_R(\bar P^*)$ is the effective dedifferentiation rate at stem load $\bar P^*$; it can
be inferred from lineage tracing that quantify TD $\to$ stem return flux
under steady-state conditions \citep{weinberg_lineage_2007}.
\end{enumerate}
Thus, the equalization and Ratio laws provide directly testable predictions: once $\delta$ and
$\lambda_P(\bar W^*)$ are estimated, the steady-state ratio $\bar P^*/\bar W^*$ is predicted by
\eqref{eq:ratio_law}, and the dedifferentiation rate required to offset net differentiation losses
is predicted by \eqref{eq:equalization_law}.

\section{Dynamics: thresholds, uniqueness, and global stability}
\label{sec:dynamics}
Throughout this section we assume the uniform TD death regime $\delta(x)\equiv\delta>0$,
so that the PDE balance laws close exactly to the planar ODE system on totals
$(\bar P,\bar W)$ (Corollary~\ref{cor:uniform-death-closure}):
\begin{equation}\label{eq:reduced_balance_recall}
\begin{cases}
\dot{\bar P}
=\Big(p_1(\bar W)-p_2(\bar W)\Big)\lambda_P(\bar W)\,\bar P
+\lambda_R(\bar P)\,\bar W,\\[6pt]
\dot{\bar W}
=\Big(1-p_1(\bar W)+p_2(\bar W)\Big)\lambda_P(\bar W)\,\bar P
-\big(\delta+\lambda_R(\bar P)\big)\,\bar W.
\end{cases}
\end{equation}
The closed quadrant $\{\bar P\ge0,\ \bar W\ge0\}$ is forward invariant and solutions are global
(Corollary~\ref{cor:uniform-death-closure}). Our aim is to expose the global qualitative structure:
a structural threshold at extinction, uniqueness of the interior equilibrium via a one-dimensional
reduction, and global non-oscillatory convergence under compensatory feedback.

\subsection{Extinction--growth threshold at the origin}
\label{subsec:threshold_origin}
We analyze the extinction state $(\bar P,\bar W)=(0,0)$ by open-loop linearization.
Let
\[
\hat p_i:=p_i(0),\qquad
\hat\lambda_P:=\lambda_P(0),\qquad
\hat\lambda_R:=\lambda_R(0),\qquad
\Delta p:=\hat p_1-\hat p_2 .
\]
Then the Jacobian of \eqref{eq:reduced_balance_recall} at the origin is
\begin{equation}\label{eq:Jac_origin}
A(\Delta p)=
\begin{pmatrix}
\Delta p\,\hat\lambda_P & \hat\lambda_R\\
(1-\Delta p)\,\hat\lambda_P & -(\delta+\hat\lambda_R)
\end{pmatrix},
\end{equation}
with trace and determinant
\begin{equation}\label{eq:tr_det_origin}
\mathrm{tr}\,A(\Delta p)=\Delta p\,\hat\lambda_P-(\delta+\hat\lambda_R),\qquad
\det A(\Delta p)=-\delta\,\hat\lambda_P\Big(\Delta p+\frac{\hat\lambda_R}{\delta}\Big).
\end{equation}

\begin{theorem}[Structural extinction--growth threshold]
\label{thm:extinction_growth}
Define
\begin{equation}\label{eq:dpcrit}
\Delta p_{\mathrm{crit}}=-\frac{\hat\lambda_R}{\delta}.
\end{equation}
Then:
\begin{enumerate}[label=(\roman*), leftmargin=2.2em]
\item If $\Delta p<\Delta p_{\mathrm{crit}}$, then $\det A>0$ and $\mathrm{tr}\,A<0$, hence the origin is locally asymptotically stable (extinction regime).
\item If $\Delta p=\Delta p_{\mathrm{crit}}$, then $\det A=0$ and one eigenvalue is $0$ (threshold case).
\item If $\Delta p>\Delta p_{\mathrm{crit}}$, then $\det A<0$, hence the origin is a saddle with a one-dimensional unstable manifold (growth regime).
\end{enumerate}
\end{theorem}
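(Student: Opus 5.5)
The plan is to treat the statement as a linearization claim about the planar system \eqref{eq:reduced_balance_recall} at the origin. It reduces to three steps: verify the Jacobian \eqref{eq:Jac_origin}, verify the trace/determinant formulas \eqref{eq:tr_det_origin}, and read off the eigenvalue structure from the signs of $\mathrm{tr}\,A$ and $\det A$.

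\textbf{Jacobian.} Write the right-hand sides as $f(\bar P,\bar W)=(p_1-p_2)(\bar W)\lambda_P(\bar W)\bar P+\lambda_R(\bar P)\bar W$ and $g(\bar P,\bar W)=(1-p_1+p_2)(\bar W)\lambda_P(\bar W)\bar P-(\delta+\lambda_R(\bar P))\bar W$.
\begin{itemize}
\item Every term in which a feedback map is differentiated is multiplied by $\bar P$ or $\bar W$, so it vanishes at the origin.
\item Hence $\partial_{\bar P}f(0,0)=\Delta p\,\hat\lambda_P$ and $\partial_{\bar W}f(0,0)=\hat\lambda_R$.
\item Likewise $\partial_{\bar P}g(0,0)=(1-\Delta p)\hat\lambda_P$ and $\partial_{\bar W}g(0,0)=-(\delta+\hat\lambda_R)$.
\end{itemize}
This needs the maps to be differentiable at $0$, which holds for the Hill families \eqref{eq:Hill_maps} with $m_i\ge1$; mere Lipschitz continuity from \ref{H1} is not enough. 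Since the origin lies on the boundary of the invariant quadrant, I will extend $p_i,\lambda_P,\lambda_R$ to a $C^1$ function on a neighbourhood of $0$ (e.g.\ by even reflection in the argument when $m_i$ is not an integer). Linearization theory then applies to the extended field, and its conclusions are restricted to the quadrant.

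\textbf{Trace and determinant.} The trace is immediate. For the determinant,
\[
\det A=-\Delta p\,\hat\lambda_P(\delta+\hat\lambda_R)-\hat\lambda_R(1-\Delta p)\hat\lambda_P=-\hat\lambda_P(\delta\,\Delta p+\hat\lambda_R),
\]
which is \eqref{eq:tr_det_origin}. Since $\delta>0$, and assuming $\hat\lambda_P>0$ (a nondegenerate division rate, implicit in the statement), the sign of $\det A$ is the sign of $\Delta p_{\mathrm{crit}}-\Delta p$.

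\textbf{Case analysis.}
\begin{itemize}
\item \emph{Case (i).} If $\Delta p<\Delta p_{\mathrm{crit}}$, then $\det A>0$. Moreover $\Delta p<-\hat\lambda_R/\delta\le0$, so $\mathrm{tr}\,A=\Delta p\,\hat\lambda_P-(\delta+\hat\lambda_R)<0$. Both eigenvalues therefore have negative real part, and the Hartley--Grobman theorem (or simply Lyapunov's indirect method) gives local asymptotic stability.
\item \emph{Case (ii).} If $\Delta p=\Delta p_{\mathrm{crit}}$, then $\det A=0$, so $0$ is an eigenvalue. The other eigenvalue equals the trace, $-\hat\lambda_R\hat\lambda_P/\delta-\delta-\hat\lambda_R<0$, which is worth recording even though only the zero eigenvalue is claimed.
\item \emph{Case (iii).} If $\Delta p>\Delta p_{\mathrm{crit}}$, then $\det A<0$, so the eigenvalues are real with opposite signs. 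The origin is hyperbolic, and the stable/unstable manifold theorem yields a one-dimensional unstable manifold tangent to the positive eigenvector.
\end{itemize}

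\textbf{Main obstacle.} The algebra is routine. The delicate points are the regularity needed to linearize at a boundary equilibrium and the standing positivity assumption $\hat\lambda_P>0$. I would state both explicitly, handling the first by the $C^1$ extension described above.
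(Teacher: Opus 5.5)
Your proposal is correct and takes the same route as the paper, whose proof simply reads the classification off the signs of $\mathrm{tr}\,A$ and $\det A$ in \eqref{eq:tr_det_origin}. You additionally supply details the paper leaves implicit: the Jacobian and determinant computation, the observation that $\Delta p<-\hat\lambda_R/\delta\le 0$ forces $\mathrm{tr}\,A<0$ in case (i), and the hypothesis $\hat\lambda_P>0$. One small correction is that your claim that Lipschitz continuity from (H1) is not enough is too pessimistic: each feedback map multiplies a coordinate that vanishes at the origin, so the field equals $Ax+g(x)$ with $g$ having Lipschitz constant $O(r)$ on the ball of radius $r$ about the origin, and this already suffices for Lyapunov's indirect method and the stable-manifold theorem, so any Lipschitz extension to negative arguments will do and the $C^1$ extension is unnecessary.
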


\begin{proof}
The transition occurs when $\det A(\Delta p)=0$, i.e.\ $\Delta p=-\hat\lambda_R/\delta$ by
\eqref{eq:tr_det_origin}. The sign patterns in \eqref{eq:tr_det_origin} give the classification.
\end{proof}

Equation \eqref{eq:dpcrit} is a structural threshold: it depends only on (i) uniform death
$\delta$, and (ii) open-loop renewal--differentiation mismatch $\Delta p$, and it follows from the
sign change of $\det A$. This makes \eqref{eq:dpcrit} robust to the detailed shape of feedback maps,
and directly interpretable as the dividing line between washout/extinction and departure from
extinction.
Explicit feedback-controlled thresholds are also a recurring outcome in other coupled continuum/hybrid systems, e.g., finite-domain instability thresholds in bidirectionally coupled endothelial–factor dynamics \citep{liu_bidirectional_2025}.

\subsection{Uniqueness of the interior equilibrium via a one-dimensional reduction}
\label{subsec:uniqueness_1d}
At any interior equilibrium $(\bar P^*,\bar W^*)\in(0,\infty)^2$, the closed-form laws from
Section~\ref{subsec:closed_form_laws} hold:
\begin{equation}\label{eq:equil_laws_recall}
\lambda_R(\bar P^*)=\big(p_2(\bar W^*)-p_1(\bar W^*)\big)\delta,\qquad
\frac{\bar P^*}{\bar W^*}=\frac{\delta}{\lambda_P(\bar W^*)}.
\end{equation}
The Ratio law defines the \emph{ratio curve}
\begin{equation}\label{eq:P_of_W}
\bar P=\bar P(\bar W)\coloneqq \frac{\delta}{\lambda_P(\bar W)}\,\bar W,\qquad \bar W>0.
\end{equation}
Substituting \eqref{eq:P_of_W} into the Equalization law yields a scalar root condition.

\medskip\noindent\textbf{One-dimensional residual.}
Assume $\lambda_R$ is strictly decreasing (hence invertible on its range) and define
\begin{equation}\label{eq:G_def}
G(\bar W) \coloneqq \lambda_R^{-1}\!\Big(\delta\big(p_2(\bar W)-p_1(\bar W)\big)\Big)
-\frac{\delta}{\lambda_P(\bar W)}\,\bar W,\qquad \bar W>0.
\end{equation}
Then $\bar W^*>0$ solves $G(\bar W^*)=0$ if and only if
$\bar P^*=\bar P(\bar W^*)$ and $(\bar P^*,\bar W^*)$ is an interior equilibrium of
\eqref{eq:reduced_balance_recall}.

\begin{theorem}[Uniqueness under a verifiable slope condition]
\label{thm:unique_equilibrium_strong}
Assume $p_1,p_2,\lambda_P\in C^1([0,\infty))$ with $\lambda_P(\bar W)>0$ and $\lambda_P'(\bar W)<0$,
and $\lambda_R\in C^1([0,\infty))$ with $\lambda_R'(\bar P)<0$.
Define $\Delta(\bar W) \coloneqq p_2(\bar W)-p_1(\bar W)$ and $\bar P(\bar W)$ by \eqref{eq:P_of_W}.
Suppose $\Delta(\bar W)\ge 0$ for $\bar W\ge 0$ and $\Delta'(\bar W)<0$ on the relevant range.
If the slope dominance condition
\begin{equation}\label{eq:slope_dominance}
-\lambda_R'\big(\bar P(\bar W)\big)\,\bar P'(\bar W)\;<\;-\delta\,\Delta'(\bar W)
\qquad\text{for all }\bar W>0
\end{equation}
holds, then the function
\begin{equation}\label{eq:r_def}
r(\bar W) \coloneqq \delta\,\Delta(\bar W)-\lambda_R\big(\bar P(\bar W)\big)
\end{equation}
is strictly decreasing on $(0,\infty)$ and has at most one root. Consequently,
\eqref{eq:reduced_balance_recall} admits at most one interior equilibrium.
\end{theorem}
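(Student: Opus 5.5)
The plan is to differentiate $r$ directly and read off its sign from the slope dominance condition \eqref{eq:slope_dominance}. Then I will connect the roots of $r$ to interior equilibria through the Ratio and Equalization laws of Theorem~\ref{thm:equalization_ratio_laws}.

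\textbf{Step 1: $r$ is strictly decreasing.} Since $p_1,p_2,\lambda_P\in C^1$ and $\lambda_P>0$, the ratio curve $\bar P(\bar W)=\delta\bar W/\lambda_P(\bar W)$ from \eqref{eq:P_of_W} is $C^1$ on $(0,\infty)$, with
\[
\bar P'(\bar W)=\delta\,\frac{\lambda_P(\bar W)-\bar W\lambda_P'(\bar W)}{\lambda_P(\bar W)^2}>0 .
\]
The sign uses $\lambda_P'<0$. Because $\lambda_R\in C^1$, the chain rule gives
\[
r'(\bar W)=\delta\,\Delta'(\bar W)-\lambda_R'\big(\bar P(\bar W)\big)\,\bar P'(\bar W).
\]
Condition \eqref{eq:slope_dominance} is exactly the statement $r'(\bar W)<0$ for every $\bar W>0$. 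By the mean value theorem, $r$ is then strictly decreasing on $(0,\infty)$, so it is injective and has at most one root.

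The hypotheses $\Delta'<0$, $\lambda_R'<0$ and $\bar P'>0$ say that $r'$ is a negative term plus a positive term. This is why a dominance condition, rather than monotonicity alone, is needed.

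\textbf{Step 2: interior equilibria are roots of $r$.} Suppose $(\bar P^*,\bar W^*)\in(0,\infty)^2$ is an interior equilibrium. Theorem~\ref{thm:equalization_ratio_laws} gives the Ratio law, so $\bar P^*=\bar P(\bar W^*)$. It also gives the Equalization law, $\delta\Delta(\bar W^*)=\lambda_R(\bar P^*)$, which is the same as $r(\bar W^*)=0$. Hence the map $(\bar P^*,\bar W^*)\mapsto\bar W^*$ sends interior equilibria into the root set of $r$. This map is injective, because $\bar P^*$ is determined by $\bar W^*$ through the ratio curve. By Step 1 there is at most one root, so there is at most one interior equilibrium.

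The only direction needed for uniqueness is equilibrium $\Rightarrow$ root. It comes straight from the derivation of the two laws, since dividing by the first balance identity is legitimate there. On the Equalization side, $\lambda_R(\bar P^*)>0$ forces $\Delta(\bar W^*)>0$, so no division by zero occurs.

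\textbf{Main obstacle.} The argument is essentially a chain-rule computation. The one point requiring care is that \eqref{eq:slope_dominance} must hold on all of $(0,\infty)$, or at least on the whole region where roots can lie. If $\Delta'<0$ is assumed only "on the relevant range," I would restrict to the interval where $\delta\Delta(\bar W)\in\lambda_R((0,\infty))$, since any root must lie there. Strict monotonicity on that interval, which is connected because $\Delta$ is monotone there, still yields at most one root. This matches the residual formulation \eqref{eq:G_def}: $G(\bar W)=0$ if and only if $r(\bar W)=0$, because $\lambda_R$ is strictly decreasing and hence invertible.
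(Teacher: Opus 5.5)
Your proposal is correct and follows essentially the same route as the paper: the chain-rule formula for $r'$ combined with the slope dominance condition gives $r'<0$ on $(0,\infty)$, and interior equilibria are identified with roots of $r$ through the Ratio and Equalization laws. Using only the direction equilibrium $\Rightarrow$ root, with $\bar P^*$ recovered from $\bar W^*$ on the ratio curve, is all that uniqueness requires; your aside about a restricted ``relevant range'' is not needed, because the slope dominance condition is assumed for every $\bar W>0$ and by itself already forces $r'<0$.
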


\begin{proof}
Differentiate \eqref{eq:r_def} using the chain rule:
\[
r'(\bar W)=\delta\,\Delta'(\bar W)-\lambda_R'\big(\bar P(\bar W)\big)\,\bar P'(\bar W).
\]
Under \eqref{eq:slope_dominance}, $r'(\bar W)<0$ for all $\bar W>0$, hence $r$ is strictly
decreasing and can cross $0$ at most once. Finally, $r(\bar W)=0$ is equivalent to the equalization
law evaluated on the ratio curve \eqref{eq:P_of_W}, hence to interior equilibria.
\end{proof}

\paragraph{Computable form of \eqref{eq:slope_dominance}}
From \eqref{eq:P_of_W},
\begin{equation}\label{eq:Pprime}
\bar P'(\bar W)=\delta\frac{\lambda_P(\bar W)-\bar W\,\lambda_P'(\bar W)}{\lambda_P(\bar W)^2}>0
\quad(\text{since }\lambda_P>0,\ \lambda_P'<0).
\end{equation}
Thus \eqref{eq:slope_dominance} compares a dedifferentiation sensitivity term
$-\lambda_R'(\bar P)\bar P'(\bar W)$ against a differentiation-gap sensitivity term
$-\delta\Delta'(\bar W)$, and can be checked numerically for any given feedback parameterization
(e.g.\ Hill families).

\paragraph{A weaker (nontriviality) uniqueness statement}
Even without \eqref{eq:slope_dominance}, if at least one of $\Delta(\cdot)$ or $\lambda_R(\cdot)$ is
non-constant and strictly monotone, then the pair of algebraic laws \eqref{eq:equil_laws_recall}
generically determines at most one interior equilibrium (cf.\ the one-dimensional solver in
Section~\ref{sec:numerics}), and Theorem~\ref{thm:unique_equilibrium_strong} provides a clean,
verifiable sufficient condition.

\subsection{Compensatory feedback and non-oscillation (global structure)}
\label{subsec:comp_feedback_global}
We now identify a sign/monotonicity regime enforcing global non-oscillatory dynamics in the
interior of the quadrant.

\begin{definition}[Compensatory feedback]\label{def:comp_feedback}
We say the feedback is \emph{compensatory} if
\begin{enumerate}[label=(\roman*), leftmargin=2.2em]
\item $p_1'(\bar W)>p_2'(\bar W)$ for all $\bar W\ge 0$ (in particular $p_1',p_2'<0$),
equivalently $\Delta'(\bar W)=p_2'(\bar W)-p_1'(\bar W)<0$;
\item $p_1(\bar W)<p_2(\bar W)$ on the relevant range, equivalently $\Delta(\bar W)>0$.
\end{enumerate}
\end{definition}

\begin{theorem}[Negative divergence and exclusion of periodic orbits]
\label{thm:no_cycles}
Assume $\delta>0$ is constant and that $p_1,p_2,\lambda_P$ are $C^1$, bounded, strictly decreasing
in $\bar W$, while $\lambda_R$ is $C^1$, bounded, strictly decreasing in $\bar P$.
If Definition~\ref{def:comp_feedback} holds, then the divergence of the vector field
$\mathbf G=(G_1,G_2)$ in \eqref{eq:reduced_balance_recall} satisfies
\begin{equation}\label{eq:neg_div}
\nabla\!\cdot\!\mathbf G(\bar P,\bar W)
=\partial_{\bar P}G_1(\bar P,\bar W)+\partial_{\bar W}G_2(\bar P,\bar W)<0
\qquad\forall\,(\bar P,\bar W)\in(0,\infty)^2.
\end{equation}
Consequently, by the Bendixson--Dulac criterion (take Dulac function $\varphi\equiv 1$),
\eqref{eq:reduced_balance_recall} admits no nontrivial periodic orbit contained in $(0,\infty)^2$.
\end{theorem}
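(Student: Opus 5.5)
The plan is a direct computation of the divergence, followed by a term-by-term sign check and then the standard Bendixson argument via Green's theorem. Write $\Delta(\bar W)=p_2(\bar W)-p_1(\bar W)$, so that the components of \eqref{eq:reduced_balance_recall} are
\[
G_1(\bar P,\bar W)=-\Delta(\bar W)\lambda_P(\bar W)\bar P+\lambda_R(\bar P)\bar W,
\qquad
G_2(\bar P,\bar W)=\big(1+\Delta(\bar W)\big)\lambda_P(\bar W)\bar P-\big(\delta+\lambda_R(\bar P)\big)\bar W .
\]
Since all maps are $C^1$, the vector field is $C^1$ on the open quadrant. Differentiating gives $\partial_{\bar P}G_1=-\Delta\lambda_P+\lambda_R'(\bar P)\bar W$ and $\partial_{\bar W}G_2=\big(\Delta'\lambda_P+(1+\Delta)\lambda_P'\big)\bar P-\delta-\lambda_R(\bar P)$. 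Hence
\[
\nabla\!\cdot\!\mathbf G
=-\Delta\lambda_P+\lambda_R'(\bar P)\bar W+\Delta'\lambda_P\bar P+(1+\Delta)\lambda_P'\bar P-\delta-\lambda_R(\bar P).
\]

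Next I check that every term is $\le 0$ on $(0,\infty)^2$ and that at least one is strictly negative. First I record two positivity facts.
\begin{itemize}
\item $\lambda_P>0$ everywhere. It is a nonnegative rate, and if it vanished at some point, strict decrease would force it to become negative beyond that point. For the Hill family this is immediate anyway.
\item $1+\Delta=1-p_1+p_2\ge 0$, because $p_1\le 1$ and $p_2\ge 0$.
\end{itemize}
With these facts the individual terms are signed as follows.
\begin{itemize}
\item $-\Delta\lambda_P<0$, by Definition~\ref{def:comp_feedback}(ii).
\item $\lambda_R'(\bar P)\bar W<0$, since $\lambda_R$ is strictly decreasing. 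This uses $\lambda_R'<0$ as assumed in the $C^1$ setting, or at least $\le 0$.
\item $\Delta'\lambda_P\bar P<0$, by Definition~\ref{def:comp_feedback}(i).
\item $(1+\Delta)\lambda_P'\bar P\le 0$.
\item $-\lambda_R(\bar P)\le 0$, since $\lambda_R$ is a nonnegative rate.
\item $-\delta<0$.
\end{itemize}
Adding these gives \eqref{eq:neg_div}. The strictness does not depend on the delicate terms: $-\delta$ alone already makes the sum negative. I expect the only genuine care needed is this sign bookkeeping, in particular justifying $\lambda_P>0$ and $1+\Delta\ge 0$ from the probability constraints $p_1,p_2\in[0,1]$, $p_1+p_2\le 1$.

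Finally, apply Bendixson--Dulac with $\varphi\equiv1$. Suppose $\Gamma\subset(0,\infty)^2$ is a nontrivial periodic orbit. It is a Jordan curve, and the open quadrant is simply connected, so its interior region $D$ lies in the quadrant as well. By Green's theorem,
\[
\iint_D \nabla\!\cdot\!\mathbf G\,dA=\oint_\Gamma (G_1\,d\bar W-G_2\,d\bar P)=0,
\]
where the line integral vanishes because $\mathbf G$ is tangent to $\Gamma$. This contradicts the strict negativity of the integrand on $D$, which has positive area. Hence no such periodic orbit exists.
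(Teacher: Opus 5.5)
Your proof is correct and follows the paper's route. You compute $\partial_{\bar P}G_1$ and $\partial_{\bar W}G_2$ explicitly, sign each term using compensatory feedback and monotonicity, and then apply Bendixson--Dulac with $\varphi\equiv 1$. Your remark that the $-\delta$ term alone secures strict negativity is a small improvement over the paper's appeal to $\lambda_R'<0$: it means you only need $\lambda_R',\lambda_P'\le 0$, which is all that strict monotonicity of a $C^1$ map guarantees.
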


\begin{proof}
Compute
\[
\partial_{\bar P}G_1(\bar P,\bar W)
=\big(p_1(\bar W)-p_2(\bar W)\big)\lambda_P(\bar W)+\lambda_R'(\bar P)\bar W<0
\]
since $p_1-p_2<0$ and $\lambda_R'<0$.
Next,
\[
\partial_{\bar W}G_2(\bar P,\bar W)
=\Big[(-p_1'(\bar W)+p_2'(\bar W))\lambda_P(\bar W)
+(1-p_1(\bar W)+p_2(\bar W))\lambda_P'(\bar W)\Big]\bar P-(\delta+\lambda_R(\bar P)).
\]
Under compensatory feedback, $-p_1'+p_2'=\Delta'(\bar W)<0$; moreover $\lambda_P'(\bar W)<0$ and
$1-p_1+p_2>0$, hence the bracketed term is negative, so $\partial_{\bar W}G_2<0$.
Summing yields \eqref{eq:neg_div}. The Bendixson--Dulac criterion \citep{perko_differential_2001} applies on any simply connected
subset of $(0,\infty)^2$ to exclude periodic orbits.
\end{proof}

\begin{theorem}[Global convergence among bounded trajectories]
\label{thm:global_convergence}
Assume the hypotheses of Theorem~\ref{thm:no_cycles}. If an interior equilibrium exists and is
unique (e.g.\ by Theorem~\ref{thm:unique_equilibrium_strong}), then every bounded trajectory of \eqref{eq:reduced_balance_recall}
with initial data in $(0,\infty)^2$ converges to that equilibrium as $t\to\infty$.
\end{theorem}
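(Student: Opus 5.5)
The argument is a Poincaré--Bendixson argument on the closed quadrant, combined with the exclusion of periodic orbits from Theorem~\ref{thm:no_cycles}. Fix initial data $(\bar P(0),\bar W(0))\in(0,\infty)^2$ with a bounded forward trajectory. The $\omega$-limit set $\omega$ is then nonempty, compact, connected and invariant. It lies in the closed quadrant, which is forward invariant by Corollary~\ref{cor:uniform-death-closure}.

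\textbf{Step 1: the trajectory stays in the open quadrant.} If $\bar P=0$ and $\bar W>0$, then $\dot{\bar P}=\lambda_R(0)\bar W>0$. If $\bar W=0$ and $\bar P>0$, then $\dot{\bar W}=(1-p_1(0)+p_2(0))\lambda_P(0)\bar P>0$. So the boundary minus the origin is crossed strictly inward, and interior trajectories remain interior for all $t\ge0$.

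\textbf{Step 2: locate the boundary equilibria.} The only equilibrium on $\partial(0,\infty)^2$ is the origin, by the same sign computation. By Poincaré--Bendixson, $\omega$ is one of three things:
\begin{enumerate}[label=(\alph*)]
\item an equilibrium, namely the origin or the interior point $E^*=(\bar P^*,\bar W^*)$;
\item a periodic orbit;
\item a union of equilibria together with orbits connecting them (a heteroclinic or homoclinic cycle).
\end{enumerate}

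\textbf{Step 3: exclude periodic orbits and connecting cycles.} A periodic orbit in $\omega$ cannot touch the boundary, because the inward flow there prevents a closed orbit from lying on or crossing it. It is therefore contained in $(0,\infty)^2$, and Theorem~\ref{thm:no_cycles} rules it out. A homoclinic loop at $E^*$, or a cycle through $E^*$, bounds a region $D$ inside the open quadrant on which the vector field is $C^1$. Green's theorem gives $\iint_D\nabla\cdot\mathbf G\,dA=\oint_{\partial D}\mathbf G\cdot n\,ds=0$ because $\mathbf G$ is tangent to $\partial D$, and this contradicts \eqref{eq:neg_div}. This is the standard Dulac extension to separatrix cycles. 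The only remaining cycles would have to pass through the origin. Such a loop would consist of boundary-adjacent orbits leaving along the unstable manifold and returning, and its interior still lies in $(0,\infty)^2$ where the divergence is negative. The same Green's-theorem argument therefore applies, since $\mathbf G$ is continuous up to the corner.

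\textbf{Step 4: exclude convergence to the origin.} This is the main obstacle, because the origin is in fact stable in the extinction regime $\Delta p<\Delta p_{\mathrm{crit}}$ of Theorem~\ref{thm:extinction_growth}. Under compensatory feedback, $\Delta(0)=\hat p_2-\hat p_1>0$, so $\Delta p<0$, and one can have $\Delta p<-\hat\lambda_R/\delta$ as well. I therefore expect to argue as follows:
\begin{itemize}
\item When the origin is a saddle ($\Delta p>\Delta p_{\mathrm{crit}}$), its one-dimensional stable manifold is tangent to the negative-eigenvalue eigenvector. One checks that this eigenvector has components of opposite sign, since the off-diagonal entries of $A$ are positive and the matrix is Metzler. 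Hence the stable manifold does not enter the open quadrant, and no interior trajectory can converge to $0$. The same Metzler argument shows $\omega\neq\{0\}$ and excludes $0$ from any cycle, because a nonempty limit set containing a saddle must also contain its stable-manifold branches.
\item In the stable case, global convergence to $E^*$ is false in general. The proof must then implicitly use that the existence of an interior equilibrium, together with its uniqueness, forces the saddle configuration. Using the sign of $r$ from Theorem~\ref{thm:unique_equilibrium_strong} near $\bar W=0$, I would try to show that $r(0^+)>0$ is exactly $\det A<0$. If that fails, I would add the hypothesis $\Delta p>\Delta p_{\mathrm{crit}}$ explicitly and note that it is needed.
\end{itemize}

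With $0$ and all cycles excluded, $\omega=\{E^*\}$, and the trajectory converges to $E^*$.
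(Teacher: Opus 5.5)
Step 4, second bullet, is where your proposal breaks, and it cannot be repaired. Existence and uniqueness of $E^*$ do not force the saddle configuration, and the statement as written is false.

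Extend $r$ from \eqref{eq:r_def} continuously to $\bar W=0$; note that $\bar P(0)=0$. Comparing with \eqref{eq:tr_det_origin} gives $\det A=-\hat\lambda_P(\delta\Delta p+\hat\lambda_R)=\hat\lambda_P\,r(0)$. So $r(0)>0$ corresponds to $\det A>0$, the opposite of the identification you hoped for. Meanwhile $\mathrm{tr}\,A<0$ always holds under compensatory feedback, because $\Delta p=-\Delta(0)<0$.

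Now suppose uniqueness is supplied by Theorem~\ref{thm:unique_equilibrium_strong}, as the statement itself suggests. Then $r$ is strictly decreasing, so a root $\bar W^*$ forces $r(0)>r(\bar W^*)=0$. The origin is therefore a hyperbolic sink, and every orbit starting in $(0,\infty)^2$ close to it is bounded and converges to $(0,0)$, not to $E^*$.

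This case is not vacuous. Take $\delta=1$, $p_1=0.1/(1+\bar W)$, $p_2=0.6/(1+\bar W)$, $\lambda_P=1/(1+\bar W)$ and $\lambda_R=0.1+0.1/(1+\bar P)$.
\begin{itemize}
\item These satisfy all hypotheses of Theorems~\ref{thm:no_cycles} and~\ref{thm:unique_equilibrium_strong}. Here $\bar P(\bar W)=\bar W(1+\bar W)$, and slope dominance reduces to $(1+2\bar W)(1+\bar W)^2<5(1+\bar W+\bar W^2)^2$, which holds for all $\bar W$.
\item Since $r(0)=0.3>0$ and $r(\bar W)\to-0.1$ as $\bar W\to\infty$, there is exactly one interior equilibrium.
\item Yet $\Delta p=-0.5<-0.2=\Delta p_{\mathrm{crit}}$, and $A$ has eigenvalues $-0.2$ and $-1.5$, so the origin attracts nearby interior orbits.
\end{itemize}

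The paper's proof has the same Poincar\'e--Bendixson skeleton as yours but never confronts this. It tacitly places the $\omega$-limit set in the open quadrant and lets uniqueness of the interior equilibrium dispose of connecting orbits. It therefore misses both the origin and homoclinic loops at $E^*$.

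The rest of your argument supplies exactly what a correct proof needs:
\begin{itemize}
\item Steps 1--3: strict inflow on the boundary, and the Green's-theorem exclusion of separatrix loops.
\item The saddle half of Step 4: the Metzler sign of the stable eigenvector plus the Butler--McGehee property. Add one line here: forward invariance of the closed quadrant shows that the global stable manifold of $0$ meets the quadrant only at $0$.
\end{itemize}

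So take your fallback as the actual fix. Assume $\Delta p>\Delta p_{\mathrm{crit}}$, equivalently $r(0)<0$, and your argument then proves the theorem. You must also drop the ``e.g.'' clause, since a strictly decreasing $r$ with $r(0)<0$ has no root. Uniqueness must then be established by other means; for the reference parameters, $r(0)=-0.2$ and $r$ is not monotone.
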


\begin{proof}
Let $(\bar P(t),\bar W(t))$ be a bounded trajectory in $(0,\infty)^2$.
Its $\omega$-limit set is nonempty, compact, and invariant. By the Poincar\'e--Bendixson theorem,
the $\omega$-limit set is either an equilibrium, a periodic orbit, or a union of equilibria with
connecting orbits. Theorem~\ref{thm:no_cycles} excludes periodic orbits, hence the $\omega$-limit set
consists of equilibria (possibly with heteroclinic connections). Uniqueness of the interior equilibrium
forces the $\omega$-limit set to be that single point, i.e.\ convergence.
\end{proof}

\paragraph{Global structure (not merely local stability)}
Theorems~\ref{thm:no_cycles}--\ref{thm:global_convergence} constrain the entire phase portrait
in the interior: oscillations are ruled out and bounded dynamics must settle to equilibrium. This is
strictly stronger than a local eigenvalue computation at $(\bar P^*,\bar W^*)$.

\subsection{Scaling invariance under uniform gain amplification}
\label{subsec:scaling_invariance}
Finally, the Hill feedback family \eqref{eq:Hill_maps} admits a scaling symmetry that separates composition from scale. For $A>0$, rescale all Hill gains by $k_i\mapsto A k_i$ and
denote the corresponding maps by $p_{j,A},\lambda_{P,A},\lambda_{R,A}$. Then
$p_{j,A}(w/A)=p_j(w)$ and $\lambda_{P,A}(w/A)=\lambda_P(w)$, while
$\lambda_{R,A}(p/A)=\lambda_R(p)$.

\begin{proposition}[Gain-scaling symmetry]\label{prop:gain_scaling}
Assume uniform death $\delta>0$ and Hill-type feedback \eqref{eq:Hill_maps}.
If $(\bar P^*,\bar W^*)$ is an interior equilibrium of \eqref{eq:reduced_balance_recall}, then
\begin{equation}\label{eq:scaling_map}
(\bar P^*,\bar W^*)\longmapsto \Big(\frac{\bar P^*}{A},\frac{\bar W^*}{A}\Big)
\end{equation}
is an interior equilibrium of the gain-scaled system. Moreover, the equilibrium ratio
$\bar P^*/\bar W^*$ is invariant under this transformation.
\end{proposition}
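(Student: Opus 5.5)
The plan is a direct substitution argument built on the equivariance identities stated just before the proposition, together with the fact that the reduced vector field is linear in $(\bar P,\bar W)$ once the feedback arguments are frozen. Write the right-hand side of \eqref{eq:reduced_balance_recall} as $\mathbf G=(G_1,G_2)$, and let $\mathbf G_A=(G_{1,A},G_{2,A})$ be the same expressions with $p_j,\lambda_P,\lambda_R$ replaced by $p_{j,A},\lambda_{P,A},\lambda_{R,A}$.

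The first step is to check the identities from \eqref{eq:Hill_maps}. For example,
\[
p_{j,A}(w/A)=\frac{\hat p_j}{1+(Ak_j\,w/A)^{m_j}}=p_j(w),
\]
and the same cancellation gives $\lambda_{P,A}(w/A)=\lambda_P(w)$ and $\lambda_{R,A}(p/A)=\lambda_R(p)$. The open-loop values $\hat p_j,\hat\lambda_P,\hat\lambda_R$ and $\delta$ are not rescaled. Hence $p_{3,A}(w/A)=p_3(w)$, and the admissibility constraints $p_{1,A}+p_{2,A}\le 1$ continue to hold.

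Next, evaluate $\mathbf G_A$ at $(\bar P^*/A,\bar W^*/A)$. Every feedback factor reduces to its unscaled value at $(\bar P^*,\bar W^*)$. The remaining state factors $\bar P$ and $\bar W$ appear linearly in each term, including the death term $\delta\bar W$, so each contributes a factor $1/A$. Therefore
\[
\mathbf G_A\big(\bar P^*/A,\bar W^*/A\big)=\tfrac1A\,\mathbf G(\bar P^*,\bar W^*)=0 .
\]
Since $A>0$, the scaled point still lies in $(0,\infty)^2$, so it is an interior equilibrium of the gain-scaled system.

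The ratio statement follows immediately, since $(\bar P^*/A)/(\bar W^*/A)=\bar P^*/\bar W^*$. As a consistency check, one can verify the Ratio law of Theorem~\ref{thm:equalization_ratio_laws} for the scaled system: $\delta/\lambda_{P,A}(\bar W^*/A)=\delta/\lambda_P(\bar W^*)$. Likewise the Equalization law transfers verbatim.

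There is no real obstacle here. The only point needing care is that the homogeneity argument relies on exactly two facts: the death term is linear (uniform $\delta$), and the gains enter the Hill maps only through the products $k_i\bar W$ and $k_4\bar P$. It should also be noted that the map is a bijection between the two equilibrium sets, with inverse corresponding to $A\mapsto 1/A$. Consequently, uniqueness transfers between the two systems as well.
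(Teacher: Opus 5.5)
Your proof is correct, but it takes a slightly different route from the paper. The paper never evaluates the vector field. It invokes the characterization of interior equilibria by the Equalization and Ratio laws (Theorem~\ref{thm:equalization_ratio_laws}). It then observes that, because $p_{j,A}(w/A)=p_j(w)$, $\lambda_{P,A}(w/A)=\lambda_P(w)$ and $\lambda_{R,A}(p/A)=\lambda_R(p)$, both laws hold at $(\bar P^*/A,\bar W^*/A)$ for the scaled maps. You instead use that each component of $\mathbf G$ is linear in $(\bar P,\bar W)$ once the feedback arguments are frozen. This gives $\mathbf G_A(\bar P/A,\bar W/A)=\tfrac1A\mathbf G(\bar P,\bar W)$ directly.

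The paper's version phrases the symmetry in terms of the observable laws the paper is built around. However, it depends on the equivalence in Theorem~\ref{thm:equalization_ratio_laws}, and implicitly on $\lambda_P(\bar W^*)>0$ so that the Ratio law can be written. Your version is self-contained, and it is strictly stronger.

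Your homogeneity identity holds at every point of the quadrant, not only at equilibria. So $t\mapsto(\bar P(t)/A,\bar W(t)/A)$ solves the scaled system whenever $(\bar P(t),\bar W(t))$ solves the original one, with no time rescaling. Differentiating $\mathbf G_A(y)=\tfrac1A\mathbf G(Ay)$ also shows that the Jacobians at corresponding equilibria coincide. Hence stability type, basins, and the conclusion of Theorem~\ref{thm:global_convergence} transfer too, not just the location of the equilibrium. You only wrote the identity at the equilibrium point, so it is worth stating this stronger form explicitly.
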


\begin{proof}[Proof sketch]
The equilibrium is characterized by the two algebraic laws
\eqref{eq:equalization_law}--\eqref{eq:ratio_law}. Under the gain scaling,
evaluating the scaled maps at $(\bar P^*/A,\bar W^*/A)$ reproduces the original values at
$(\bar P^*,\bar W^*)$, hence both laws remain satisfied. Ratio invariance is immediate.
\end{proof}

Uniform gain amplification changes the level of homeostasis but not the mix:
the stem-to-TD ratio remains fixed while totals rescale by $1/A$. In the game-theoretic view
(Section~\ref{sec:replicator_game}), this corresponds to a uniform amplification of feedback
sensitivity that rescales absolute abundances while preserving the Nash/ESS composition. Figure~\ref{fig:dynamics_structure} visualizes the extinction-growth threshold (Theorem~\ref{thm:extinction_growth}) and the gain-scaling symmetry (Proposition~\ref{prop:gain_scaling}).

\begin{figure*}[htbp]
 \centering
 \includegraphics[width=\textwidth]{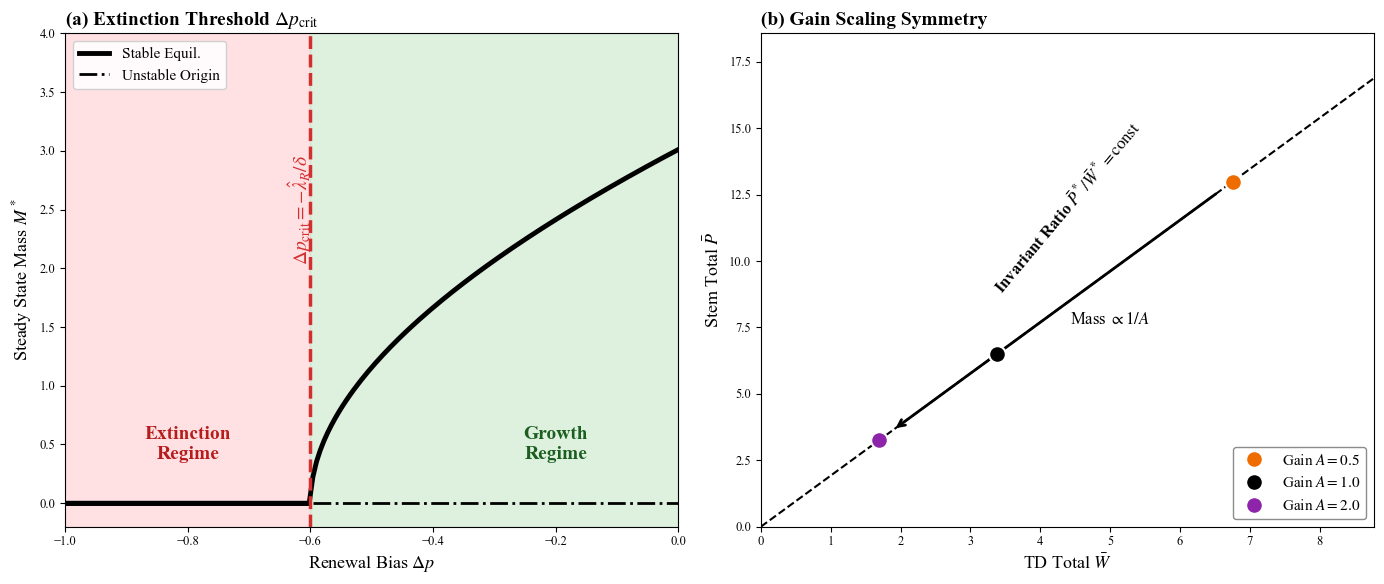} 
 \caption{\textbf{Global qualitative structure and scaling symmetry.}
 \textbf{(a) Extinction-growth threshold (Theorem 5.1):} A stability threshold at the origin separates the extinction regime (red zone, stable origin) from the growth regime (green zone, stable interior equilibrium). The critical threshold occurs exactly at $\Delta p_{\mathrm{crit}}$.
 \textbf{(b) Gain-scaling symmetry (Proposition 5.6):} Under uniform amplification of feedback gains ($A \to 2A$), the equilibrium point shifts along a ray of constant stem-to-TD ratio (dashed line), demonstrating that tissue composition is invariant to global sensitivity scaling while total mass scales as $1/A$.}
 \label{fig:dynamics_structure}
\end{figure*}

\section{Numerical methods and reproducible verification}
\label{sec:numerics}

\begin{table}[h]
\centering
\caption{Correspondence between analytical results and certified numerical checks.}
\label{tab:evidence_chain}
\begin{tabular}{lll}
\hline
\textbf{Theorem / Result} & \textbf{Numerical check} & \textbf{Section / Figure} \\
\hline
Theorem~\ref{thm:balance-laws} (balance laws) &
  Certified residual bounds $R_P,R_W,R_M$ &
  \ref{subsubsec:balance_residual}, Fig.~\ref{fig:balance_residual} \\
Corollary~\ref{cor:uniform-death-closure} (ODE closure) &
  PDE totals vs.\ ODE &
  \ref{subsubsec:pde_vs_ode}, Fig.~\ref{fig:pde_vs_ode} \\
Theorem~\ref{thm:nash_equiv} (Nash = homeostasis) &
  Replicator field zero-crossing &
  \ref{subsubsec:replicator_evidence}, Fig.~\ref{fig:replicator} \\
Theorem~\ref{thm:equalization_ratio_laws} (equalization/Ratio laws) &
  Certified equilibrium residuals &
  \ref{subsubsec:nash_cert} \\
Theorem~\ref{thm:extinction_growth} (threshold) &
  Bifurcation scan &
  \ref{subsubsec:bifurcation}, Fig.~\ref{fig:bifurcation} \\
Theorem~\ref{thm:no_cycles} (Dulac / no cycles) &
  Divergence field sign check &
  \ref{subsubsec:divergence}, Fig.~\ref{fig:divergence} \\
Theorem~\ref{thm:global_convergence} (global convergence) &
  Phase portrait convergence &
  \ref{subsubsec:phase_portrait}, Fig.~\ref{fig:phase_portrait} \\
Proposition~\ref{prop:gain_scaling} (gain scaling) &
  Scaling invariance to $\varepsilon_{\mathrm{mach}}$ &
  \ref{subsubsec:scaling}, Table~\ref{tab:scaling} \\
\hline
\end{tabular}
\end{table}

This section provides a self-contained computational counterpart to the analytical results
of Sections~\ref{sec:balance_reduction}--\ref{sec:dynamics}.
Following best practice for certified computational verification in mathematical biology
\citep{oberkampf_verification_2002,roache_verification_1998},
we organize the material into two layers.
The first is the verification layer. 
Do the discrete schemes solve the stated equations accurately?
The second is the certified computational check layer. 
Do the numerical solutions exhibit the qualitative
behavior predicted by the theorems, with all reported quantities programmatically emitted
by deterministic, parameter-locked scripts?
Table~\ref{tab:evidence_chain} maps each main theorem to the specific verification test or certified computational check that supports it. 
Related recent work also stresses algebraic–spectral stability tests for explicit time-stepping schemes (e.g., forward Euler) alongside closed-form thresholds \citep{wang_algebraicspectral_2026}, and it provides a complementary verification paradigm for mechanistic dynamical models.

\subsection{Numerical schemes}
\label{subsec:schemes}

Our PDE solver is the first-order upwind finite-volume scheme with operator splitting. We discretize the damage-structured system \eqref{eq:PDE_model} on a uniform grid
$\{x_j=\bigl(j+\tfrac12\bigr)\Delta x\}_{j=0}^{N_x-1}$ over $[0,x_{\max}]$
with $\Delta x=x_{\max}/N_x$ and 
$x_{\max}=10$ used for baseline runs; for closure-verification and long-horizon comparisons we increase $x_{\max}$ (Section~\ref{subsubsec:pde_vs_ode}).

Since $v_P,v_W>0$, information propagates in the positive-$x$ direction and the upwind
numerical flux at the interface $x_{j+1/2}$ is
\begin{equation}\label{eq:num_upwind_flux}
\mathcal F_{j+1/2}^P = v_P\, P_j^n, \qquad
\mathcal F_{j+1/2}^W = v_W\, W_j^n,
\end{equation}
with $\mathcal F_{1/2}^P=\mathcal F_{1/2}^W=0$ enforcing the homogeneous inflow boundary
condition $P(t,0)=W(t,0)=0$.
The conservative finite-volume update for the transport step is
\begin{equation}\label{eq:num_fv_transport}
P_j^{n+1,\mathrm{tr}} = P_j^n - \frac{\Delta t}{\Delta x}
\bigl(\mathcal F_{j+1/2}^P - \mathcal F_{j-1/2}^P\bigr),
\end{equation}
and analogously for $W_j^{n+1,\mathrm{tr}}$.

The birth operators $\mathcal B_P,\mathcal B_W$ (cf.~\eqref{eq:birth_ops})
involve the dilation (push-forward) map
$[\mathcal T_\alpha f](x) = (1/\alpha)\,f(x/\alpha)$
for each inheritance fraction
$\alpha\in\{\alpha_1,\alpha_2,\gamma_1\}$ (stem branch) or
$\alpha\in\{\beta_1,\beta_2,\gamma_2\}$ (TD branch).
The fundamental property driving the balance-law reduction is the mass identity
$\int_0^\infty [\mathcal T_\alpha f]\,dx = \int_0^\infty f\,dx$
(Lemma~\ref{lem:cov_identity}). A discretization that does not preserve this identity
exactly would introduce a spurious mass defect that contaminates the balance-law residual
(Test~3) and the PDE-vs-ODE closure check.

We therefore use a conservative finite-volume projection (flux-form remapping)
rather than pointwise interpolation.
Treating $P_k$ as the cell average on $C_k=[k\Delta x,(k+1)\Delta x]$, the dilation maps
$C_k$ to the image interval $\alpha C_k=[\alpha k\Delta x,\alpha(k+1)\Delta x]$ of width
$\alpha\Delta x$.
Since $\alpha\in(0,1)$, this image is narrower than $\Delta x$ and overlaps at most two
target cells.
We redistribute the source mass $P_k\Delta x$ onto the target grid by exact overlap
fractions:
\begin{equation}\label{eq:num_conservative_remap}
[\mathcal T_{\alpha,h} P]_j
= \frac{1}{\alpha\,\Delta x}\sum_{k=0}^{N_x-1} P_k\;\omega_{jk}(\alpha),
\end{equation}
where $\omega_{jk}(\alpha):=|\alpha C_k\cap C_j|$ is the length of the overlap between
the image interval and target cell $C_j$.
Because $\sum_j \omega_{jk}(\alpha)=|\alpha C_k|=\alpha\,\Delta x$ for every source cell $k$
(provided $\alpha C_k\subset[0,x_{\max}]$), we obtain the exact discrete conservation
identity
\begin{equation}\label{eq:num_exact_conservation}
\sum_{j=0}^{N_x-1} [\mathcal T_{\alpha,h} P]_j\,\Delta x
= \sum_{k=0}^{N_x-1} P_k\,\Delta x
\end{equation}
to machine precision ($\varepsilon_{\mathrm{mach}}\approx 2.2\times10^{-16}$).
The overlap weights $\omega_{jk}$ are nonzero only for $j\in\{j_L(k),j_L(k)+1\}$ with $j_L(k) = \lfloor \alpha k\rfloor$, so the
operation is $\mathcal O(N_x)$ and implemented via vectorized scatter-add.

The full discrete birth operators are then assembled as in the continuous case:
\[
[\mathcal B_{P,h}]_j
= p_1(\bar W_h)\,\lambda_P(\bar W_h)
  \bigl([\mathcal T_{\alpha_1,h}P]_j+[\mathcal T_{\alpha_2,h}P]_j\bigr)
+ p_3(\bar W_h)\,\lambda_P(\bar W_h)\,[\mathcal T_{\gamma_1,h}P]_j,
\]
and analogously for $\mathcal B_{W,h}$.
By linearity of \eqref{eq:num_exact_conservation} and $p_1+p_2+p_3=1$, this yields the
exact discrete analogue of Lemma~\ref{lem:birth_mass}:
\[
\sum_j [\mathcal B_{P,h}+\mathcal B_{W,h}]_j\,\Delta x
= 2\,\lambda_P(\bar W_h)\,\bar P_h.
\]

We use first-order operator splitting: given $(P^n,W^n)$ at $t^n$,
we first advance the transport step by \eqref{eq:num_fv_transport} (and its analogue for $W$),
obtaining $(P^{n+1,\mathrm{tr}},W^{n+1,\mathrm{tr}})$.
We then update the reaction step by forward Euler,
\begin{align}\label{eq:num_fv_reaction}
P_j^{n+1} &= P_j^{n+1,\mathrm{tr}}
+ \Delta t\Big( \bigl[\mathcal B_{P,h}[P^{n+1,\mathrm{tr}}]\bigr]_j
- \lambda_P(\bar W_h^{n+1,\mathrm{tr}})\,P_j^{n+1,\mathrm{tr}}
+ \lambda_R(\bar P_h^{n+1,\mathrm{tr}})\,W_j^{n+1,\mathrm{tr}}\Big),\\
\nonumber
W_j^{n+1} &= W_j^{n+1,\mathrm{tr}}
+ \Delta t\Big( \bigl[\mathcal B_{W,h}[P^{n+1,\mathrm{tr}}]\bigr]_j
- \big(\delta+\lambda_R(\bar P_h^{n+1,\mathrm{tr}})\big)\,W_j^{n+1,\mathrm{tr}}\Big),
\end{align}
where $\bar P_h^{n+1,\mathrm{tr}}=\sum_j P_j^{n+1,\mathrm{tr}}\Delta x$ and
$\bar W_h^{n+1,\mathrm{tr}}=\sum_j W_j^{n+1,\mathrm{tr}}\Delta x$.
Time integration is performed under the CFL restriction
\begin{equation}\label{eq:num_CFL}
\Delta t \le \mathrm{CFL}\cdot\frac{\Delta x}{\max(v_P,v_W)},\qquad
\mathrm{CFL}=0.8.
\end{equation}

To suppress rare floating-point undershoots, we apply a positivity fix only when
$\min_j P_j^{n+1}< -10^{-14}$ or $\min_j W_j^{n+1}< -10^{-14}$, by setting negative values
to zero. We record the induced mass defect at each step and verify it remains below $10^{-12}$
relative to total mass in all runs reported here.

Because the scheme is first-order in both the transport upwinding and the operator splitting,
we expect $\mathcal{O}(\Delta x)$ convergence of the integrated quantities
$\bar P(t),\bar W(t)$; this is confirmed in
Section~\ref{subsubsec:grid_convergence}.

We now turn to the ODE solver. The exact two-compartment closure \eqref{eq:reduced_balance} is integrated using an adaptive Runge–Kutta method with error control.
The exact two-compartment closure \eqref{eq:reduced_balance} is integrated using the
Dormand--Prince RK4(5) pair (\textsc{RK45}), with relative tolerance $\varepsilon_{\mathrm{rel}}=10^{-10}$
and absolute tolerance $\varepsilon_{\mathrm{abs}}=10^{-12}$.
These stringent tolerances ensure that the ODE solution serves as a reference truth against
which PDE totals can be compared. For verification, we also cross-check against the
eighth-order Dormand--Prince method (\textsc{DOP853}) and the implicit Radau~IIA(5) solver;
agreement to $\mathcal O(10^{-10})$ confirms that temporal truncation error is negligible
(Section~\ref{subsubsec:timestep}).

Interior equilibria of \eqref{eq:reduced_balance} are characterised by the one-dimensional
root condition $r(\bar W)=0$ (cf.\ \eqref{eq:r_def}), where
\begin{equation}\label{eq:num_r_algo}
r(\bar W) \coloneqq \delta\,\bigl(p_2(\bar W)-p_1(\bar W)\bigr)
- \lambda_R\!\left(\frac{\delta\,\bar W}{\lambda_P(\bar W)}\right).
\end{equation}
The companion coordinate is recovered from the Ratio law:
$\bar P^* = \delta\,\bar W^*/\lambda_P(\bar W^*)$.

For the reference parameter set (Table~\ref{tab:params}), Algorithm~\ref{alg:nash}
detects exactly one sign change of $r(\bar W)$ and returns a unique interior equilibrium
$(\bar P^*,\bar W^*)$.
The Equalization law, Ratio law, and ODE steady-state residuals are all satisfied to
double-precision accuracy
(see Section~\ref{subsubsec:nash_cert}).

\begin{algorithm}[H]
\caption{Bracket-and-refine Nash solver}\label{alg:nash}
\begin{algorithmic}[1]
\Require Feedback maps $p_1,p_2,\lambda_P,\lambda_R$; death rate $\delta>0$;
         search interval $[\bar W_{\min},\bar W_{\max}]$; tolerance $\varepsilon$.
\Ensure  List of certified interior equilibria $\{(\bar P^*_k,\bar W^*_k)\}$.
\State Evaluate $r(\bar W)$ on a log-spaced grid of $N_{\mathrm{grid}}=2000$ points in
       $[\bar W_{\min},\bar W_{\max}]$.
\ForAll{consecutive pairs $(r_j,r_{j+1})$ with $r_j\,r_{j+1}<0$}
  \State Apply Brent's method on $[\bar W_j,\bar W_{j+1}]$ with tolerance $\varepsilon$ to
         obtain $\bar W^*$.
  \State Set $\bar P^*=\delta\,\bar W^*/\lambda_P(\bar W^*)$.
  \State Certify: evaluate both equilibrium laws \eqref{eq:equalization_law}--\eqref{eq:ratio_law}
         and the ODE right-hand side at $(\bar P^*,\bar W^*)$; accept if all residuals are
         $<10\varepsilon$.
\EndFor
\end{algorithmic}
\end{algorithm}

\begin{table}[t]
\centering
\caption{Reference parameter set for all numerical experiments.
The compensatory feedback conditions of Definition~\ref{def:comp_feedback}
are satisfied: $\Delta(\bar W):=p_2(\bar W)-p_1(\bar W)>0$ and
$\Delta'(\bar W)<0$ for all $\bar W\ge 0$
(verified numerically for the Hill families with these parameters).}
\label{tab:params}
\begin{tabular}{llc}
\hline
\textbf{Symbol} & \textbf{Description} & \textbf{Value} \\
\hline
$\hat p_1$        & Baseline self-renewal probability     & 0.20 \\
$\hat p_2$        & Baseline symmetric differentiation     & 0.40 \\
$\hat\lambda_P$   & Baseline stem division rate            & 1.0  \\
$\hat\lambda_R$   & Baseline dedifferentiation rate        & 0.3  \\
$k_1,k_2,k_3,k_4$ & Hill gains                            & 0.5  \\
$m_1,m_2,m_3,m_4$ & Hill exponents                        & 2    \\
$\delta$           & Uniform TD death rate                 & 0.5  \\
$v_P,v_W$          & Transport (damage accumulation) speeds & 1.0  \\
$\alpha_i,\beta_i,\gamma_i$ & Symmetric damage inheritance & 0.5  \\
\hline
\end{tabular}
\end{table}

All verification tests use the reference parameter set recorded in Table~\ref{tab:params}.
Initial data for the PDE are Gaussian bumps (rapidly decaying to working precision):
$P_0(x)=\exp\!\bigl(-(x-2)^2/0.5\bigr)$ and $W_0(x)=0.5\,\exp\!\bigl(-(x-1.5)^2/0.5\bigr)$.

\subsection{Test~1: PDE grid convergence}
\label{subsubsec:grid_convergence}
We run the PDE solver on four successively refined grids
$N_x\in\{100,200,400,800\}$ with $T=5$ and compare the computed total masses
$\bar P(t),\bar W(t)$ against the finest grid. Table~\ref{tab:grid_conv} reports the
$L^\infty$-norm of the inter-grid differences and the estimated convergence rates. Figure~\ref{fig:grid_convergence} shows the consistency of the stem and TD mass across different spatial resolutions.

\begin{table}[h]
\centering
\caption{Grid convergence of PDE totals ($T=5$, CFL${}=0.8$).
Differences are measured in $L^\infty([0,T])$ between consecutive refinements.}
\label{tab:grid_conv}
\begin{tabular}{cccccc}
\hline
$N_x^{\mathrm{coarse}}$ & $N_x^{\mathrm{fine}}$ &
$\|\Delta\bar P\|_\infty$ & $\|\Delta\bar W\|_\infty$ &
rate ($\bar P$) & rate ($\bar W$) \\
\hline
100 & 200 & $1.80\times 10^{-3}$ & $6.19\times 10^{-3}$ & --- & --- \\
200 & 400 & $8.69\times 10^{-4}$ & $2.97\times 10^{-3}$ & 1.05 & 1.06 \\
400 & 800 & $4.27\times 10^{-4}$ & $1.46\times 10^{-3}$ & 1.02 & 1.03 \\
\hline
\end{tabular}
\end{table}

\begin{figure}[htbp]
    \centering
    \includegraphics[width=\linewidth]{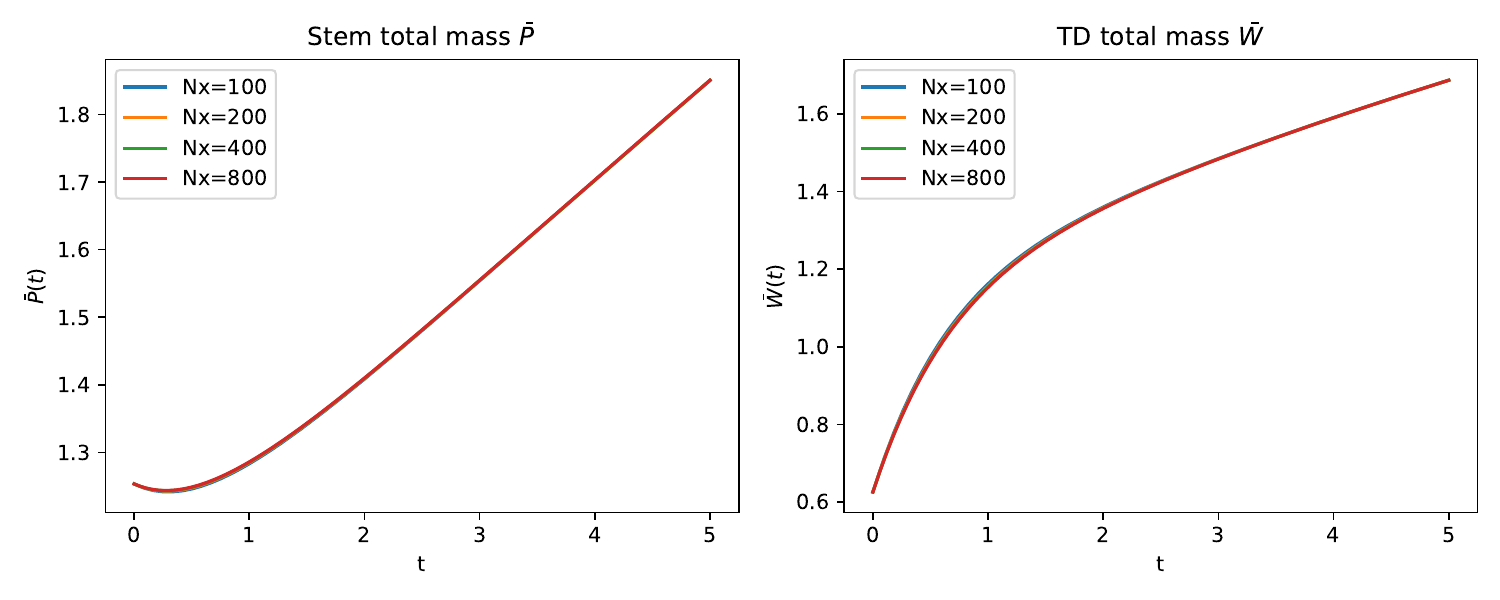}
    \caption{\textbf{Stem and TD total mass consistency under grid refinement tests}. \textbf{(a) Stem mass consistency:} Time evolution of the total stem population $\bar P(t)$ simulated on uniform grids with $N_x \in \{100, 200, 400, 800\}$. The trajectories for different resolutions overlap, indicating solution stability.
\textbf{(b) TD mass consistency:} Corresponding dynamics for the TD population $\bar W(t)$, exhibiting similar robustness to spatial discretization.}
    \label{fig:grid_convergence}
\end{figure}

\noindent
The rates are consistent with the expected first-order convergence of the upwind scheme
in the integrated $L^1$ sense. Higher-order accuracy could be attained with slope-limited MUSCL
reconstruction, but is not pursued here since the primary analytical conclusions rest on the
exact ODE closure.

\subsection{Test~2: ODE time-step and method independence}
\label{subsubsec:timestep}
We integrate the exact closure \eqref{eq:reduced_balance} with
$\Delta t_{\max}\in\{0.1,0.05,0.025,0.0125\}$ and compare to the finest-step trajectory at $T=10$.
Trajectory differences are $\mathcal O(10^{-11}\text{--}10^{-12})$ throughout, confirming that
temporal truncation error is negligible relative to other error sources.
Cross-method comparison (RK45 vs.\ DOP853 vs.\ Radau~IIA) yields agreement to
$\mathcal O(10^{-10}\text{--}10^{-11})$, providing independent confirmation.

\subsection{Test~3a: Balance-law residual}
\label{subsubsec:balance_residual}
Under uniform death $\delta(x)\equiv\delta$, Theorem~\ref{thm:balance-laws} and
Corollary~\ref{cor:uniform-death-closure} predict exact closure of the total-mass dynamics.
To ensure we are verifying the closure rather than measuring domain-truncation artefacts
(cf.\ Section~\ref{subsubsec:pde_vs_ode}), we use $x_{\max}=20$ with $N_x=1600$
($\Delta x\approx 0.0125$).

We define the computed balance residuals for the stem compartment as
\begin{equation}\label{eq:num_residual_P}
R_P(t) \coloneqq \frac{d\bar P_h}{dt}\bigg|_{\mathrm{num}}
- \Bigl[\bigl(p_1(\bar W_h)-p_2(\bar W_h)\bigr)\lambda_P(\bar W_h)\,\bar P_h
+ \lambda_R(\bar P_h)\,\bar W_h\Bigr],
\end{equation}
i.e.\ the numerical time derivative minus the exact right-hand side of the
$\bar P$-balance law \eqref{eq:balance_laws_general}.
The analogous residual $R_W(t)$ is defined using the $\bar W$-equation.
As an independent cross-check, we compute the total-mass residual
$R_M(t)$ both as $R_P(t)+R_W(t)$ and directly from the total-mass identity
$\dot M=\lambda_P(\bar W)\bar P-\delta\bar W$; agreement between these two forms
(to machine epsilon, $\approx 4\times 10^{-16}$)
provides an additional verification point.

At $T=5$, we observe $\|R_P\|_\infty,\|R_W\|_\infty,\|R_M\|_\infty$ at the
$\mathcal O(10^{-3})$ level (Figure~\ref{fig:balance_residual}).
A domain-sensitivity check confirms that these residuals are independent of $x_{\max}$
(identical at $x_{\max}\in\{10,15,20\}$ for $T=5$): at this time horizon the density support
has not reached $x_{\max}=10$, so truncation does not contaminate the residual.
The magnitudes of residuals decrease under grid refinement, confirming that the balance laws hold to the accuracy of
the spatial discretization.

\begin{figure}[htbp]
    \centering
    \includegraphics[width=\linewidth]{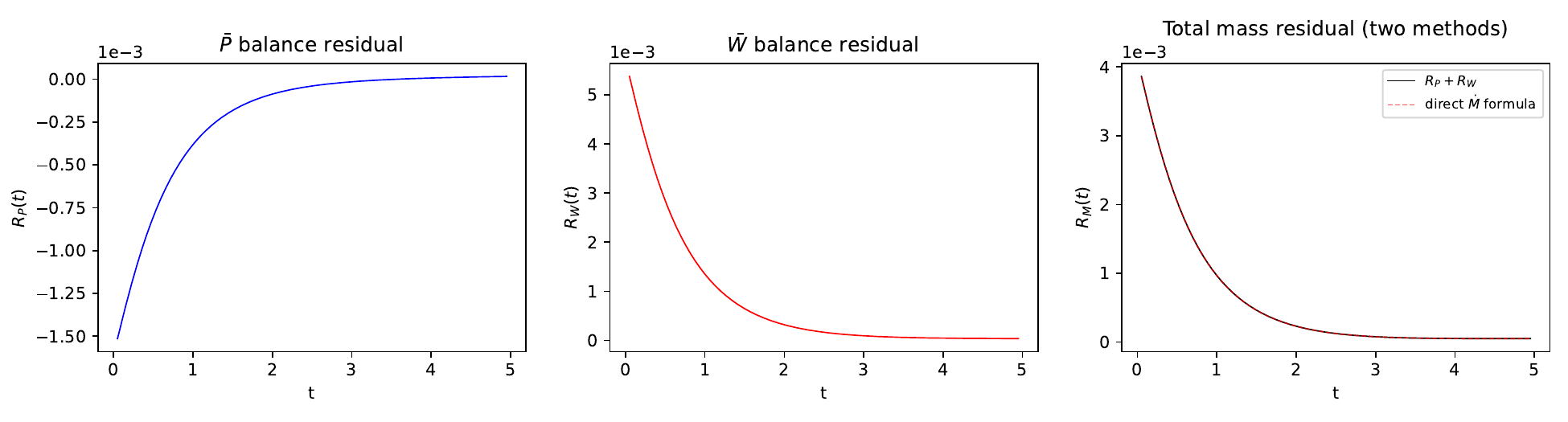}
    \caption{\textbf{Temporal evolution of the stem, TD, and total mass residuals}. The computed residuals for stem \textbf{(a)} and TD \textbf{(b)} masses remain bounded at the $\mathcal{O}(10^{-3})$ level, reflecting the expected first-order accuracy of the upwind finite-volume discretization.
\textbf{(c) Total mass residual consistency:} Consistency of $R_M(t)$ obtained from both the $R_P(t)+R_W(t)$ and the total-mass identity $\dot M = \lambda_P(\bar W)\bar P - \delta \bar W$.}
    \label{fig:balance_residual}
\end{figure}

\subsection{Test~3b: Identifiability check for uniform death via effective mortality}
\label{subsubsec:identifiability_delta}

When $\delta(x)$ is non-constant, the mortality flux in \eqref{eq:balance_laws_general} cannot be written as $\delta\,\bar W(t)$ and the two-dimensional closure \eqref{eq:reduced_balance} fails in general (Section~\ref{subsec:nonconstant_delta_breaks}, Fig.~\ref{fig:closure_condition}). A diagnostic quantity that makes this failure explicit is the \emph{effective mortality}
\begin{equation}\label{eq:delta_eff}
\delta_{\mathrm{eff}}(t):=\frac{\int_0^\infty \delta(x)W(t,x)\,dx}{\bar W(t)}\qquad(\bar W(t)>0).
\end{equation}
Exact closure on totals would require $\delta_{\mathrm{eff}}(t)\equiv\delta$ to be constant in time. Hence, any time-variation of $\delta_{\mathrm{eff}}(t)$ provides a direct, quantitative certificate that the uniform-death reduction is not valid for the simulated (or measured) regime.
In our uniform-death reference setting, \eqref{eq:delta_eff} is constant to machine precision; under damage-dependent $\delta(x)$ (not shown), $\delta_{\mathrm{eff}}(t)$ varies in time and the closure residuals increase accordingly, consistent with the mechanism in Fig.~\ref{fig:closure_condition}.

If $\delta(x)$ is weakly state-dependent, then $\delta_{\mathrm{eff}}(t)$ may remain approximately constant, yielding an approximate low-dimensional closure. Quantifying this approximation (e.g., via moment augmentation) is a natural extension, but is not required for the exact reduction results presented here.

We now present numerical solutions that independently corroborate each main theorem,
providing a figure-by-figure correspondence between analytical predictions and computed behaviour.

\subsection{PDE totals vs.\ exact ODE closure (Corollary~\ref{cor:uniform-death-closure})}
\label{subsubsec:pde_vs_ode}
As a direct check that the PDE total masses satisfy the closed ODE, we integrate both the full
PDE and the two-dimensional ODE from matched initial conditions over $T=15$.

Because the PDE is posed on a truncated domain $[0,x_{\max}]$ with zero padding beyond
$x_{\max}$ in the birth-operator interpolation, mass whose damage coordinate exceeds $x_{\max}$
is negligible.
To isolate this artefact from the spatial discretization error, we fix
$\Delta x\approx 0.0125$ and vary $x_{\max}\in\{10,15,20\}$
(scaling $N_x$ proportionally).
The PDE-vs-ODE discrepancy drops from $\mathcal O(10^{-1})$ at $x_{\max}=10$ to $\mathcal O(10^{-3})$ at
$x_{\max}=15$ and $\mathcal O(10^{-4})$ at $x_{\max}=20$, confirming that the dominant error source at
$x_{\max}=10$ is domain truncation rather than spatial discretization.

With $x_{\max}=20$ (sufficiently large that truncation effects are negligible),
the PDE-vs-ODE discrepancy in $\|\bar P_{\mathrm{PDE}}-\bar P_{\mathrm{ODE}}\|_{L^\infty[0,T]}$
decreases under grid refinement at the expected first-order rate
(Table~\ref{tab:pde_ode_conv}), directly corroborating that the closure
is exact and the remaining error is entirely due to the $\mathcal{O}(\Delta x)$ upwind scheme. Figure~\ref{fig:pde_vs_ode} shows the tight consistency between the totals in the PDE and the ODE models.
The positivity-fix mass defect is zero in all runs.

\begin{table}[h]
\centering
\caption{PDE-vs-ODE closure error under grid refinement ($x_{\max}=20$, $T=15$).
All entries are programmatically emitted.}
\label{tab:pde_ode_conv}
\begin{tabular}{ccccc}
\hline
$N_x$ & $\Delta x$ &
$\|\Delta\bar P\|_\infty$ & $\|\Delta\bar W\|_\infty$ & rate ($\bar P$) \\
\hline
 400 & 0.050 & $1.72\times 10^{-3}$ & $5.86\times 10^{-3}$ & --- \\
 800 & 0.025 & $8.50\times 10^{-4}$ & $2.89\times 10^{-3}$ & 1.02 \\
1600 & 0.0125 & $4.23\times 10^{-4}$ & $1.44\times 10^{-3}$ & 1.01 \\
3200 & 0.00625 & $2.11\times 10^{-4}$ & $7.16\times 10^{-4}$ & 1.00 \\
\hline
\end{tabular}
\end{table}

\begin{figure}
    \centering
    \includegraphics[width=\linewidth]{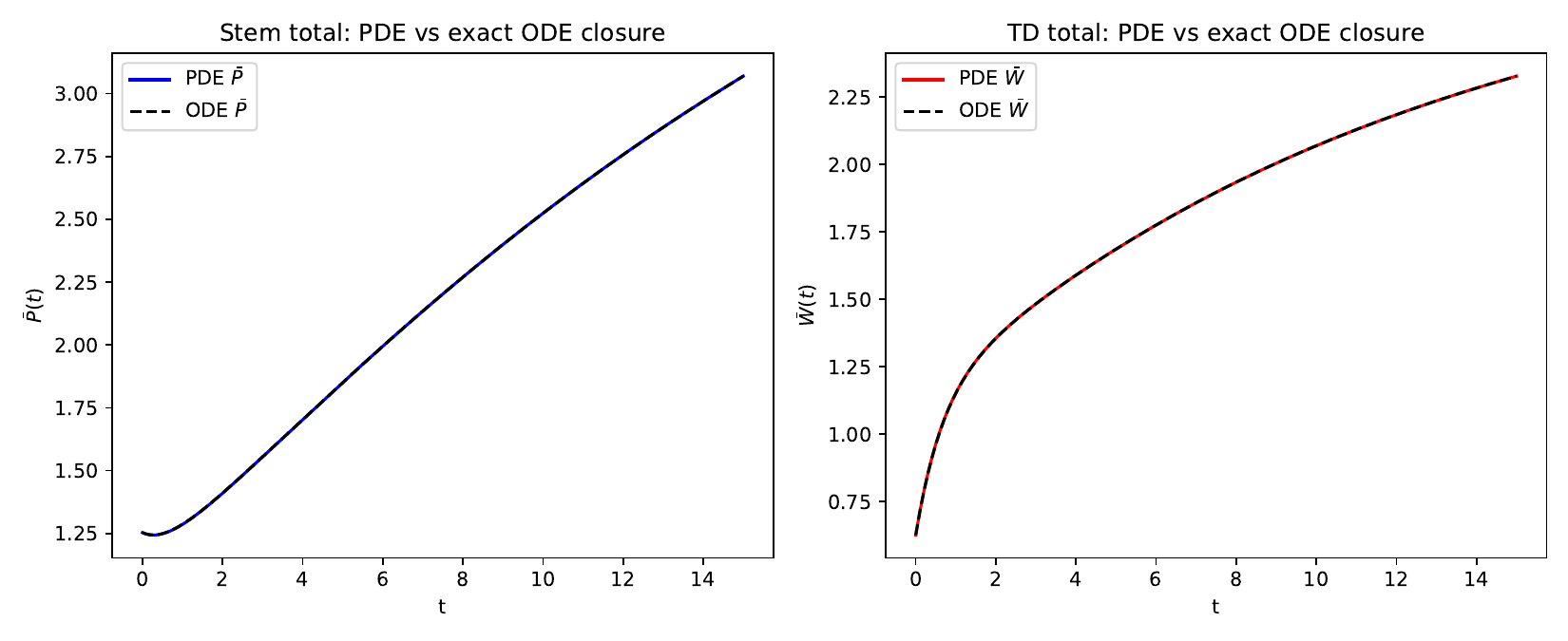}
    \caption{\textbf{Temporal evolution of the stem and TD total masses}.
    \textbf{(a) Stem total consistency:} The consistency of stem totals obtained from integrating the PDE solution and from solving the stem-total ODE. The overlap between the solid line (PDE) and the dashed line (ODE) validates the exact ODE closure. 
    \textbf{(b) TD total consistency:} The consistency of TD totals obtained from the same way as the stem totals.}
    \label{fig:pde_vs_ode}
\end{figure}

\subsection{Phase portrait with nullclines
(Theorems~\ref{thm:extinction_growth} and \ref{thm:global_convergence})}
\label{subsubsec:phase_portrait}
Figure~\ref{fig:phase_portrait} shows the $(\bar P,\bar W)$ phase plane with:
(i) the $\dot{\bar P}=0$ nullcline and the $\dot{\bar W}=0$ nullcline,
computed by solving the respective implicit equations via bisection;
(ii) a normalized vector field (arrows colored by speed);
(iii) eight trajectories launched from diverse initial conditions in $(0,\infty)^2$, integrated
with RK45 to $T=30$.
All trajectories converge monotonically to the unique interior equilibrium
$(\bar P^*,\bar W^*)$, with no oscillations or spiraling,
consistent with the strictly negative divergence established in Theorem~\ref{thm:no_cycles}
and the global convergence result of Theorem~\ref{thm:global_convergence}.
The origin $(0,0)$ is a saddle, confirming the growth regime:
for the reference parameters, $\Delta p=\hat p_1-\hat p_2=-0.20$ while
$\Delta p_{\mathrm{crit}}=-\hat\lambda_R/\delta=-0.60$, so
$\Delta p>\Delta p_{\mathrm{crit}}$
(Theorem~\ref{thm:extinction_growth}).

\begin{figure}[htbp]
    \centering
    \includegraphics[width=0.7\linewidth]{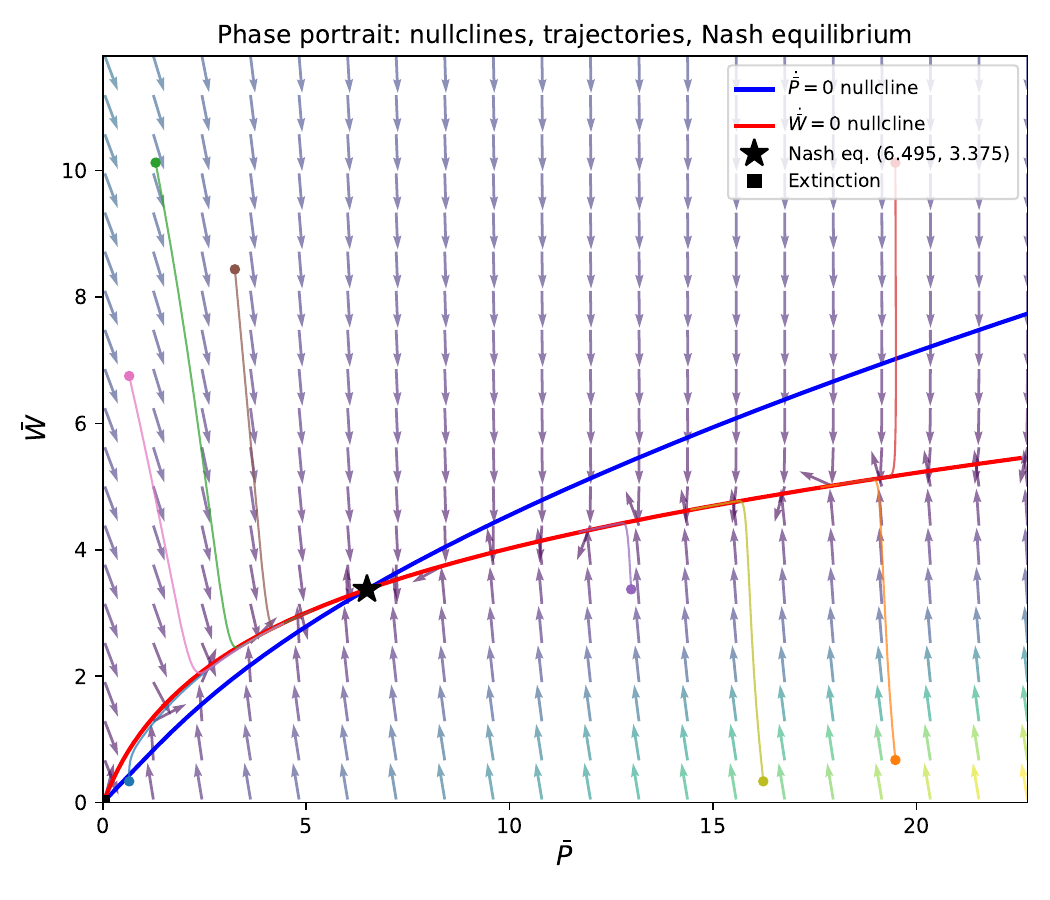}
    \caption{\textbf{Phase portrait of \eqref{eq:reduced_balance_recall}}. 
    The isoclines where the net growth rates vanish ($\dot{\bar{P}}=0$, blue) and ($\dot{\bar{W}}=0$, orange) divide the phase space into regions of monotonic increase or decrease.
    Their intersection defines the unique positive equilibrium $(\bar{P}^*, \bar{W}^*)$, representing the homeostatic tissue size.
    Sample trajectories (gray curves) initiated from various total mass configurations all converge to the equilibrium, numerically confirming the global stability guaranteed by the Bendixson–Dulac criterion.}
    \label{fig:phase_portrait}
\end{figure}

\subsection{Replicator field and Nash frequency
(Theorem~\ref{thm:nash_equiv})}
\label{subsubsec:replicator_evidence}
Figure~\ref{fig:replicator} plots
$\dot s = s(1-s)(F_P-F_W)$ as a function of the stem frequency $s=\bar P/M$
for several fixed values of the total mass $M\in\{0.3M^*,0.6M^*,M^*,1.5M^*,2.5M^*\}$,
where $M^*=\bar P^*+\bar W^*$.
In every case the replicator field has a unique zero-crossing at the Nash/homeostasis frequency
$s^*$ predicted by Theorem~\ref{thm:nash_equiv} and the equilibrium laws
\eqref{eq:equalization_law}--\eqref{eq:ratio_law}.
For $s<s^*$ the field is positive ($\dot s>0$: stem fraction increasing),
and for $s>s^*$ it is negative ($\dot s<0$: stem fraction decreasing),
confirming that $s^*$ is a stable rest point of the replicator equation \eqref{eq:replicator_form}
and hence an ESS of the induced two-phenotype game.

\begin{figure}[htbp]
    \centering
    \includegraphics[width=0.75\linewidth]{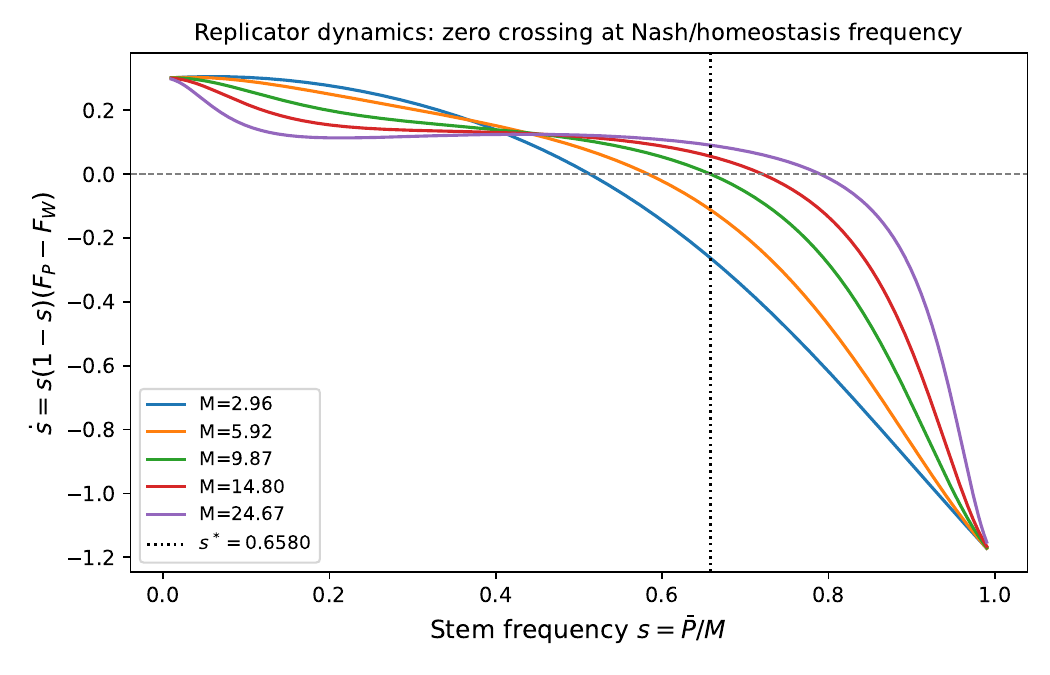}
    \caption{\textbf{Stem frequency $\dot s = s(1-s)(F_P-F_W)$}.
    The rate of change of stem frequency $\dot s$ is plotted against $s$ for fixed total mass values $M\in\{0.3M^*,0.6M^*,M^*,1.5M^*,2.5M^*\}$. The dynamics are governed by the replicator equation $\dot s = s(1-s)(F_P - F_W)$.
    All curves intersect $\dot s = 0$ at a unique interior frequency $s^*$, corresponding to the unique Nash equilibrium where the per-capita payoffs equalize ($F_P = F_W$).
    The negative slope at $s^*$ indicates that the equilibrium is evolutionarily stable (ESS), driving the population composition toward the homeostatic ratio regardless of the initial fraction.}
    \label{fig:replicator}
\end{figure}

\subsection{Bifurcation scan in \texorpdfstring{$\Delta p$}{Delta p} and \texorpdfstring{$\delta$}{delta}
(Theorem~\ref{thm:extinction_growth})}
\label{subsubsec:bifurcation}
Figure~\ref{fig:bifurcation} presents one-parameter continuation scans of the interior
equilibrium as $\Delta p:=\hat p_1-\hat p_2$ and $\delta$ are varied independently, with all
other parameters held at their reference values (Table~\ref{tab:params}). In panel (a),
$\Delta p$ is implemented through the admissible affine parameterization
$\hat p_1=(0.8+\Delta p)/3$ and $\hat p_2=(0.8-2\Delta p)/3$, so that
$\hat p_1-\hat p_2=\Delta p$ while the critical value
$\Delta p_{\mathrm{crit}}=-\hat\lambda_R/\delta$ remains in the interior of the scan window.

\begin{figure}[htbp]
    \centering
    \includegraphics[width=\linewidth]{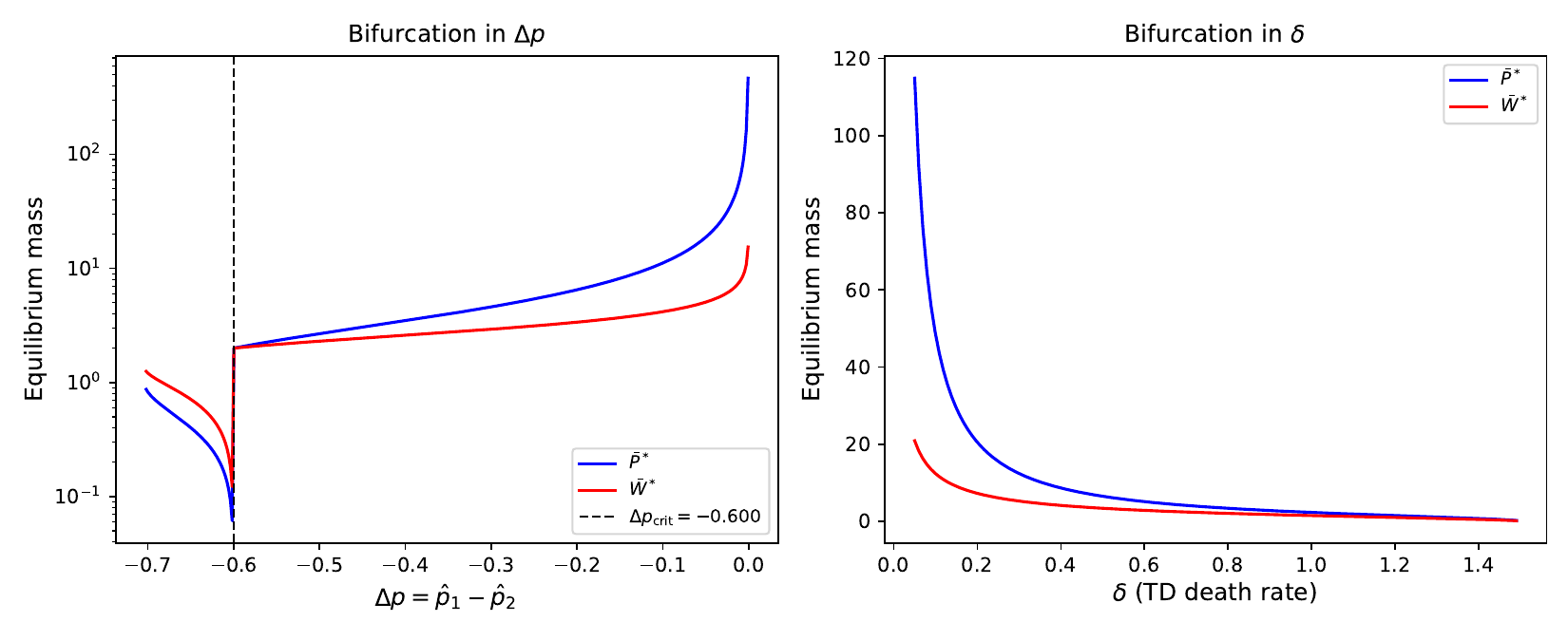}
    \caption{\textbf{One-parameter continuation scans}.
    	\textbf{(a) Critical threshold:} The equilibrium stem mass $\bar P^*$ (blue) and TD mass $\bar W^*$ (orange) are plotted against the renewal bias $\Delta p = \hat p_1 - \hat p_2$ using the affine parameterization $\hat p_1=(0.8+\Delta p)/3$, $\hat p_2=(0.8-2\Delta p)/3$. Both curves are displayed on a semilogarithmic scale. Gaps indicate parameter values for which either the probabilities are inadmissible or no positive equilibrium is detected. The dashed line marks $\Delta p_{\mathrm{crit}} = -\hat\lambda_R/\delta$.
    	\textbf{(b) Mortality sensitivity:} Dependence of the equilibrium masses on the TD death rate $\delta$. As mortality increases, the homeostatic tissue size contracts until extinction.}
    \label{fig:bifurcation}
\end{figure}

Theorem~\ref{thm:extinction_growth} establishes $\Delta p_{\mathrm{crit}}=-\hat\lambda_R/\delta$
as a local threshold at the origin: for $\Delta p<\Delta p_{\mathrm{crit}}$ the origin is
locally asymptotically stable, while for $\Delta p>\Delta p_{\mathrm{crit}}$ it is a saddle with an unstable manifold.
In the present parameter family that satisfies the hypotheses of
Theorems~\ref{thm:unique_equilibrium_strong}--\ref{thm:global_convergence} throughout
the scan range, the local instability of the origin coincides with the appearance of a
  unique positive equilibrium. Numerically, panel (a) resolves this branch over the admissible
  window $\Delta p\in[-0.8,0.4]$ determined by the chosen affine parameterization of
  $(\hat p_1,\hat p_2)$. The branch is omitted, rather than plotted at zero, whenever the
  probability constraints fail or the one-dimensional root solver detects no interior equilibrium.
  Within the displayed admissible range, the threshold $\Delta p_{\mathrm{crit}}=-0.6$ lies in the
  interior of the scan and separates the extinction regime from the positive branch. Near
  $\Delta p=0^-$ the equilibrium masses grow rapidly, so the semilogarithmic presentation is used
  to display the large variation in scale without conflating nonexistence with vanishing mass.
  In the $\delta$-scan, increasing $\delta$ shifts the equilibrium toward smaller total mass,
  primarily by increasing the net loss term in the $\bar W$-equation.
  No secondary bifurcation or multi-stability is observed in this parameter family.

\subsection{Gain-scaling invariance (Proposition~\ref{prop:gain_scaling})}
\label{subsubsec:scaling}
Proposition~\ref{prop:gain_scaling} predicts that under uniform amplification
$k_i\mapsto A k_i$ ($i=1,\dots,4$), the equilibrium rescales as
$(\bar P^*,\bar W^*)\mapsto(\bar P^*/A,\bar W^*/A)$
while the ratio $\bar P^*/\bar W^*$ is invariant.
Table~\ref{tab:scaling} confirms this to machine precision
(ratio errors $\le 3\times 10^{-15}$ for all $A$ tested),
providing a stringent check that both the analytical scaling law
and the numerical root-finder are correct. Figure~\ref{fig:scaling} shows the scaling of equilibrium levels and invariance of ratios under uniform scaling. 

\begin{table}[h]
\centering
\caption{Gain-scaling test: equilibrium under $k_i\mapsto A k_i$.
Let $\bar P^*_{A=1},\bar W^*_{A=1}$ denote the equilibrium at $A=1$.
The ratio $\bar P^*/\bar W^*$ is invariant to machine precision.}
\label{tab:scaling}
\begin{tabular}{rrrcrr}
\hline
$A$ & \multicolumn{1}{c}{$\bar P^*$} & \multicolumn{1}{c}{$\bar W^*$} &
$\bar P^*_A\big/\bigl(\bar P^*_{A=1}/A\bigr)$ &
$\bar P^*/\bar W^*$ & ratio error \\
\hline
0.5 & 12.9890 & 6.7507 & 1.0000 & 1.9241 & $2.89\times10^{-15}$ \\
1.0 &  6.4945 & 3.3753 & 1.0000 & 1.9241 & $0.00\times10^{0}$ \\
2.0 &  3.2473 & 1.6877 & 1.0000 & 1.9241 & $2.89\times10^{-15}$ \\
4.0 &  1.6236 & 0.8438 & 1.0000 & 1.9241 & $2.89\times10^{-15}$ \\
8.0 &  0.8118 & 0.4219 & 1.0000 & 1.9241 & $2.89\times10^{-15}$ \\
\hline
\end{tabular}
\end{table}

\begin{figure}
    \centering
    \includegraphics[width=\linewidth]{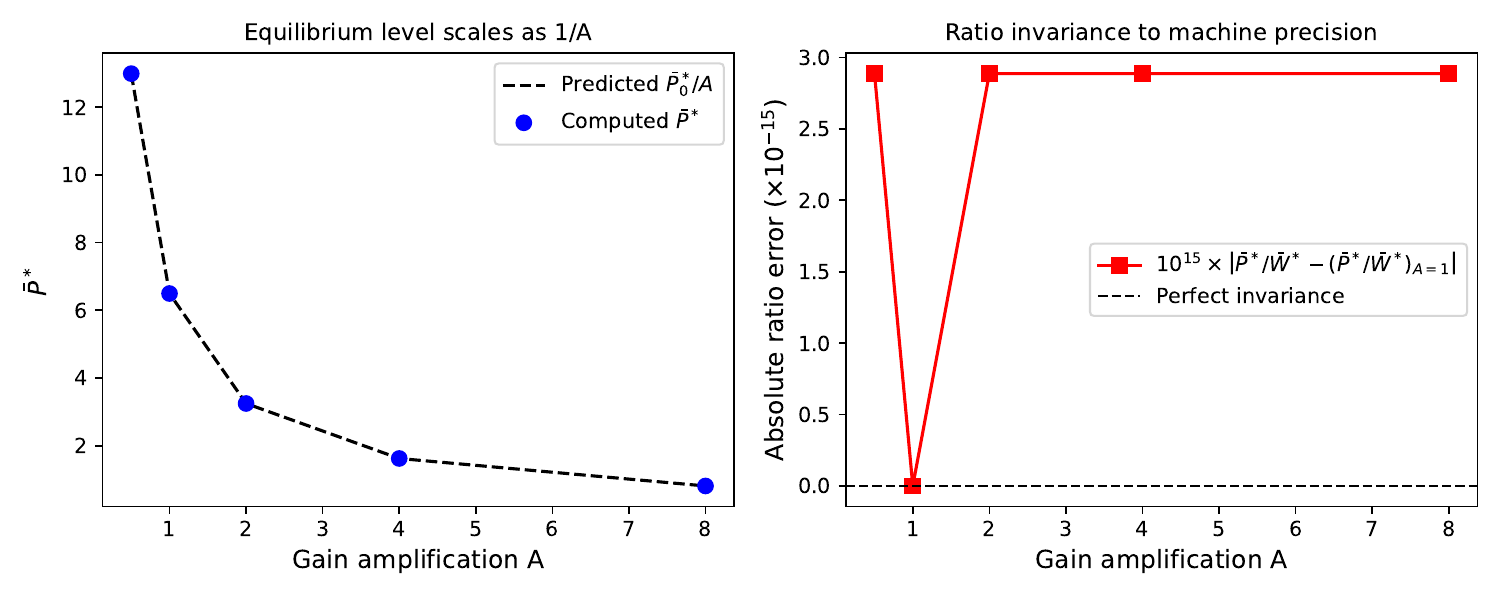}
    \caption{\textbf{Rescaling of the equilibrium and invariance of population ratio}.
    \textbf{(a) Gain rescaling:} The equilibrium stem mass $\bar P^*$ (blue dot) and predicted stem mass from Proposition~\ref{prop:gain_scaling} $\bar W^*$ (dashed curve) are plotted as a function of the feedback gain scaling factor $A$. As predicted by Proposition~\ref{prop:gain_scaling}, increasing the sensitivity of the feedback loops ($A \to 8.0$) scales the total tissue mass down exactly by $1/A$.
    \textbf{(b) Ratio invariance:} Rather than plotting the nearly constant raw ratio $\bar P^*/\bar W^*$, the panel reports the absolute deviation from the $A=1$ baseline ratio in units of $10^{-15}$. The deviations remain at machine precision across all tested scalings, confirming that the composition is invariant up to roundoff error.}
    \label{fig:scaling}
\end{figure}

\subsection{Certified Nash equilibrium computation
(Theorem~\ref{thm:equalization_ratio_laws})}
\label{subsubsec:nash_cert}
Algorithm~\ref{alg:nash} returns a unique interior equilibrium
$(\bar P^*,\bar W^*)$. The certification step evaluates four independent residual bounds:
\begin{center}
\begin{tabular}{lc}
\hline
\textbf{Certified residual} & \textbf{Bound} \\
\hline
Equalization law: $\bigl|(p_2(\bar W^*)-p_1(\bar W^*))\delta-\lambda_R(\bar P^*)\bigr|$ &
$< 10^{-16}$ \\
Ratio law: $\bigl|\bar P^*/\bar W^*-\delta/\lambda_P(\bar W^*)\bigr|$ &
$= 0$ \\
ODE RHS: $|\dot{\bar P}|$ at $(\bar P^*,\bar W^*)$ & $< 10^{-15}$ \\
ODE RHS: $|\dot{\bar W}|$ at $(\bar P^*,\bar W^*)$ & $< 10^{-15}$ \\
\hline
\end{tabular}
\end{center}
All residuals are at or below double-precision machine epsilon ($\approx 2.2\times 10^{-16}$),
certifying the equilibrium to the maximum attainable numerical accuracy.

\subsection{Negative divergence field (Theorem~\ref{thm:no_cycles})}
\label{subsubsec:divergence}
Figure~\ref{fig:divergence} displays the divergence
$\nabla\!\cdot\!\mathbf G(\bar P,\bar W)$ of the reduced vector field,
computed by centered finite differences on a $200\times 200$ grid over
$[0.01,\,3.5\bar P^*]\times[0.01,\,3.5\bar W^*]$.
The divergence is strictly negative at every sampled grid point;
the supremum over all $200\times 200$ evaluation points
satisfies $\sup\nabla\!\cdot\!\mathbf G<0$.
No zero-level contour appears.
This is consistent with the analytical result of
Theorem~\ref{thm:no_cycles}, which establishes strict negativity on all of $(0,\infty)^2$;
the numerical check provides independent corroboration on the sampled subdomain.

\begin{figure}
    \centering
    \includegraphics[width=0.75\linewidth]{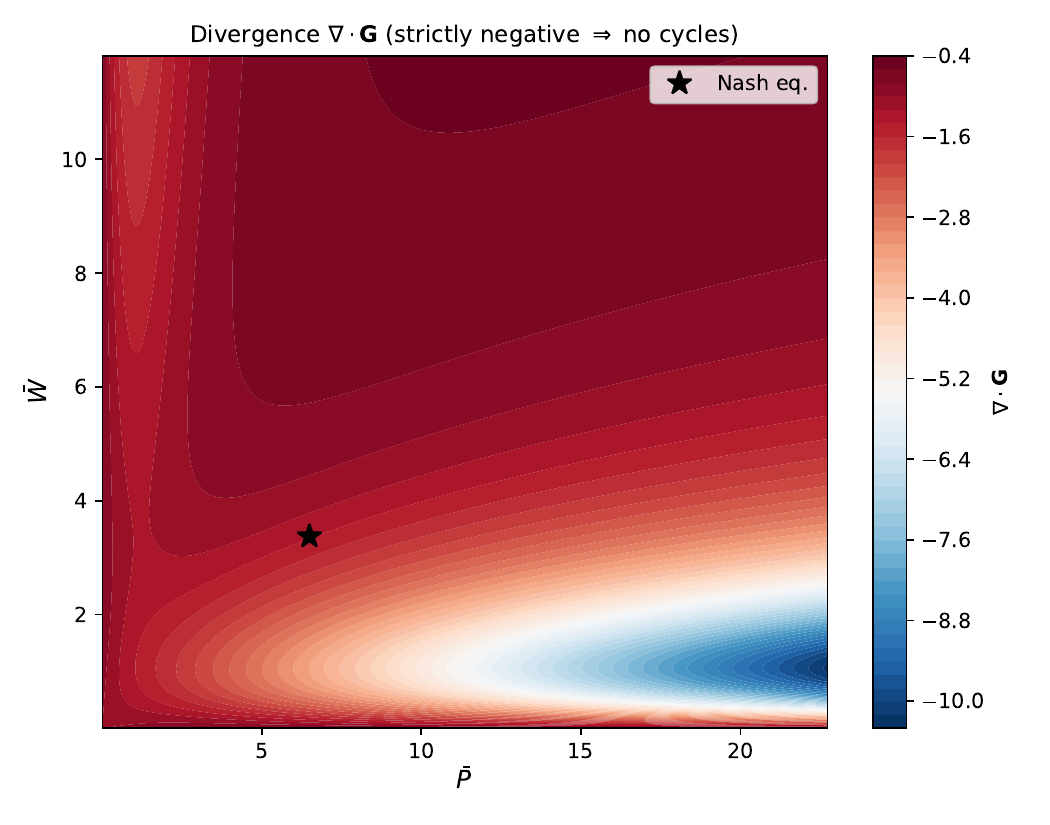}
    \caption{\textbf{Heatmap of the divergence $\nabla\cdot G$}.
    Heatmap of the vector field divergence $\nabla \cdot G = \partial_{\bar P} \dot{\bar P} + \partial_{\bar W} \dot{\bar W}$ across the phase space. The divergence is strictly negative everywhere, satisfying the Bendixson–Dulac criterion (with weighting function $\varphi \equiv 1$) and rigorously precluding the existence of periodic orbits. This global contraction of phase volume guarantees that all trajectories in the positive quadrant must converge to the unique homeostatic equilibrium.}
    \label{fig:divergence}
\end{figure}

\section{Case study: murine intestinal crypt homeostasis and regeneration}
\label{sec:case_study}

We illustrate the modeling pipeline on the murine small-intestinal crypt, a canonical
fast-renewing tissue in which stem-cell numbers, turnover times, and injury-induced
plasticity are unusually well quantified. In this system, Lgr5$^+$ crypt-base-columnar
(CBC) stem cells sustain epithelial renewal under homeostasis and can be replenished
after injury through progenitor-cell plasticity and dedifferentiation, making the crypt
a natural biological testbed for the equilibrium laws and regeneration dynamics derived
in Sections~\ref{sec:replicator_game}--\ref{sec:dynamics}
\citep{snippert_intestinal_2010,hagemann_quantitation_1970,van_der_flier_stem_2009,murata_ascl2-dependent_2020}.

\subsection{Biological context and aggregate calibration targets}
\label{subsec:crypt_biology}

Quantitative lineage-tracing experiments indicate that a murine small-intestinal crypt
contains approximately $14\pm2$ Lgr5$^{\mathrm{hi}}$ stem cells, and that these cells
divide roughly once per day under homeostatic conditions \citep{snippert_intestinal_2010}. At the
tissue level, the intestinal epithelium renews on a $\sim 4$--$5$ day timescale
\citep{van_der_flier_stem_2009}. As an aggregate calibration target for the present
two-compartment reduction, we take the total crypt cellularity to be approximately
292 cells, so that after assigning 14 cells to the Lgr5$^+$ stem compartment, the
remaining $\sim278$ cells define the effective TD compartment used below
\citep{hagemann_quantitation_1970}.

To probe regeneration, we use the radiation-injury paradigm. Lgr5-marked CBC cells are
highly radiosensitive and are strongly depleted after irradiation, while subsequent crypt
recovery requires restoration of the stem-cell pool \citep{richmond_enduring_2016}.
Recent lineage-tracing work further shows that this restoration is driven in large part
by ASCL2-dependent dedifferentiation of committed absorptive and secretory progenitors
back into the stem state \citep{murata_ascl2-dependent_2020}. These observations provide a natural
dynamical perturbation against which to compare the reduced model.

\subsection{Mapping the crypt onto the two-compartment reduction}
\label{subsec:crypt_mapping}

We identify the model stem compartment $P$ with the Lgr5$^+$ CBC population and the
effective TD compartment $W$ with all remaining crypt-resident TD cells.
This lumped $W$-compartment includes both transit-amplifying progenitors and more mature
epithelial lineages. Accordingly, the present case study should be read as an aggregate
coarse-graining of the crypt rather than as a one-to-one transcriptional cell-state model.

We set the effective loss rate in the $W$-compartment to
\[
\delta=\frac{1}{4}\;\mathrm{day}^{-1},
\]
corresponding to a representative 4-day epithelial turnover time. The homeostatic
aggregate calibration targets are therefore
\[
(\bar P^*,\bar W^*)=(14,278),
\qquad
s^*=\frac{\bar P^*}{\bar P^*+\bar W^*}=\frac{14}{292}=0.0479.
\]

\subsection{Calibration from the equilibrium laws}
\label{subsec:crypt_calibration}

A central advantage of the theory is that the homeostatic equilibrium is constrained
algebraically by the Ratio law and Equalization law of
Theorem~\ref{thm:equalization_ratio_laws}. Substituting the crypt targets
$(\bar P^*,\bar W^*)=(14,278)$ and $\delta=1/4$ into the Ratio law gives
\[
\lambda_P(\bar W^*)=\delta\frac{\bar W^*}{\bar P^*}
=\frac14\frac{278}{14}\approx 4.9643\;\mathrm{day}^{-1}.
\]
This value should be interpreted as an \emph{aggregate effective production rate} in the
two-compartment coarse-graining, not as the literal single-cell cycling rate of an
Lgr5$^+$ CBC cell. Biologically, this distinction is expected: the present reduction
lumps the transit-amplifying cascade into the effective TD compartment, so the
net stem-to-output flux is amplified relative to the $\sim1$ division/day CBC rate
\citep{snippert_intestinal_2010}.

We then calibrate Hill-type feedback maps to place the interior equilibrium exactly at
the chosen aggregate target. The calibrated parameter set returned by the script is
\begin{align*}
\hat p_1 &= 0.076603, &
\hat p_2 &= 0.222652, \\
k_{p_1} &= 0.004260, &
k_{p_2} &= 0.004260, \\
\hat\lambda_P &= 5.884502\;\mathrm{day}^{-1}, &
k_{\lambda_P} &= 0.001549, \\
\hat\lambda_R &= 0.039233\;\mathrm{day}^{-1}, &
k_{\lambda_R} &= 0.089821,
\end{align*}
with Hill exponents fixed at
\[
m_{p_1}=m_{p_2}=m_{\lambda_P}=m_{\lambda_R}=2,
\qquad
\delta=0.25\;\mathrm{day}^{-1}.
\]
For this calibrated model, the optimization objective is
$7.158\times10^{-28}$, the selected equilibrium is
\[
(\bar P^*,\bar W^*)=(14.0000,278.0000),
\]
the equalization residual is $-1.735\times10^{-18}$, and the ratio residual is
$6.939\times10^{-18}$. Thus, the equilibrium laws are satisfied up to numerical
roundoff, and the reduced system reproduces the prescribed aggregate homeostatic state
exactly at floating-point precision. 

\subsection{Homeostasis as payoff equalization}
\label{subsec:crypt_nash}

Under this calibration, the crypt homeostatic state is simultaneously:
(i) the interior equilibrium of the reduced two-compartment system,
(ii) the intersection of the ratio and Equalization laws, and
(iii) the interior rest point of the induced replicator equation for the stem frequency
\[
s(t)=\frac{\bar P(t)}{\bar P(t)+\bar W(t)}.
\]
Its equilibrium composition is
\[
s^*=0.0479,
\]
that is, a stem fraction of approximately $4.79\%$.

In the game-theoretic interpretation of Section~\ref{sec:replicator_game}, this means
that intestinal crypt homeostasis corresponds to payoff equalization between the stem
and TD phenotypes:
\[
F_P(\bar P^*,\bar W^*)=F_W(\bar P^*,\bar W^*)=0.
\]
Thus, the stem phenotype is not maintained because it is intrinsically dominant, but
because feedback-regulated lineage dynamics drive the system to a composition at which
neither phenotype has a per-capita growth advantage. In this sense, crypt homeostasis
is an induced Nash equilibrium of the reduced population game.
Figure~\ref{fig:crypt_main} shows murine intestinal crypt homeostasis and regeneration in the calibrated two-compartment reduction.

\begin{figure}[htbp]
    \centering
    \includegraphics[width=0.95\textwidth]{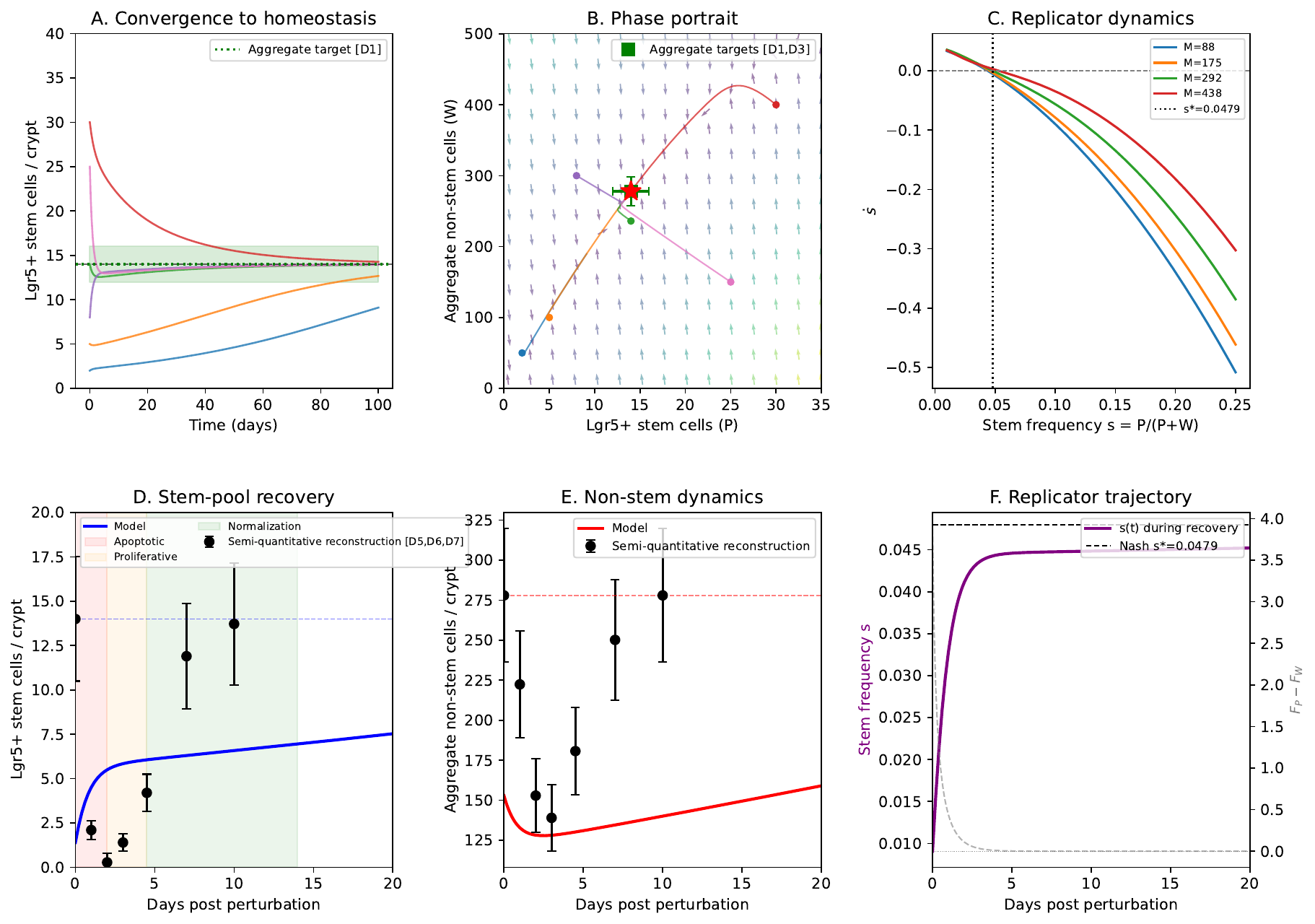}
    \caption{\textbf{Murine intestinal crypt homeostasis and regeneration in the calibrated two-compartment reduction.}
    \textbf{(A) Convergence to the homeostatic aggregate state.} Trajectories from diverse initial conditions converge to the calibrated equilibrium
    $(\bar P^*,\bar W^*)=(14,278)$, corresponding to a stem fraction
    $s^*=\bar P^*/(\bar P^*+\bar W^*)=0.0479$.
    \textbf{(B) Phase portrait and literature-based homeostatic target.} The nullclines of the reduced system intersect at the unique positive equilibrium. The highlighted target point corresponds to the aggregate homeostatic crypt state used for calibration, with approximately 14 Lgr5$^+$ stem cells and $\sim 292$ total crypt cells.
    \textbf{(C) Replicator dynamics and payoff equalization.} The induced frequency dynamics
    $\dot s=s(1-s)(F_P-F_W)$ admit a unique interior rest point at $s^*=0.0479$, showing that homeostasis corresponds to payoff equalization between stem and TD phenotypes.
    \textbf{(D) Stem-pool recovery after irradiation-like depletion.} Following acute depletion of the stem compartment, the model trajectory returns toward the homeostatic state. Black points denote a semi-quantitative reconstruction of the published post-irradiation response, included as a biological reference rather than as a single-cohort fit.
    \textbf{(E) TD compartment dynamics during regeneration.} The effective TD pool initially declines after injury and subsequently recovers as the system returns to the homeostatic composition.
    \textbf{(F) Recovery of stem frequency.} The stem fraction $s(t)$ returns toward the Nash/homeostatic frequency $s^*=0.0479$ after injury-induced depletion, illustrating that regeneration restores not only total abundance but also the homeostatic phenotypic composition. The biological context for the calibration is the murine small-intestinal crypt, in which Lgr5$^+$ CBC stem cells divide approximately once per day and the epithelium renews on a 4--5 day timescale. \label{fig:crypt_main}}
\end{figure}

\subsection{Post-injury regeneration and the role of dedifferentiation}
\label{subsec:crypt_regeneration}

To connect the equilibrium theory with regeneration, we consider an irradiation-like
perturbation in which the Lgr5$^+$ compartment is sharply depleted from its homeostatic
level, mimicking the experimentally observed radiosensitivity of CBC stem cells
\citep{richmond_enduring_2016}. In the numerical case study, the subsequent recovery
trajectory is compared against a semi-quantitative reconstruction of the
published post-irradiation response rather than a single harmonized cohort.

The key qualitative prediction is that recovery of the stem pool is driven initially by
the return flux $\lambda_R(\bar P)\bar W$ from the TD compartment rather than by
net self-renewal of the depleted stem population. This is consistent with the mechanism
highlighted experimentally by Murata et al. \citep{murata_ascl2-dependent_2020}, who showed that regeneration of the
ablated ISC compartment is explained to a large extent by dedifferentiation of
absorptive and secretory progenitors and requires ASCL2 activity.
Within the reduced model, the same biology appears as a transient regime in which the
stem frequency falls below $s^*$ after injury, generating a positive payoff difference
$F_P-F_W$ and driving the system back toward the homeostatic composition.
Figure~\ref{fig:crypt_dediffer} shows dedifferentiation as the dominant regenerative mechanism in the calibrated crypt model.
\begin{figure}[htbp]
    \centering
    \includegraphics[width=\textwidth]{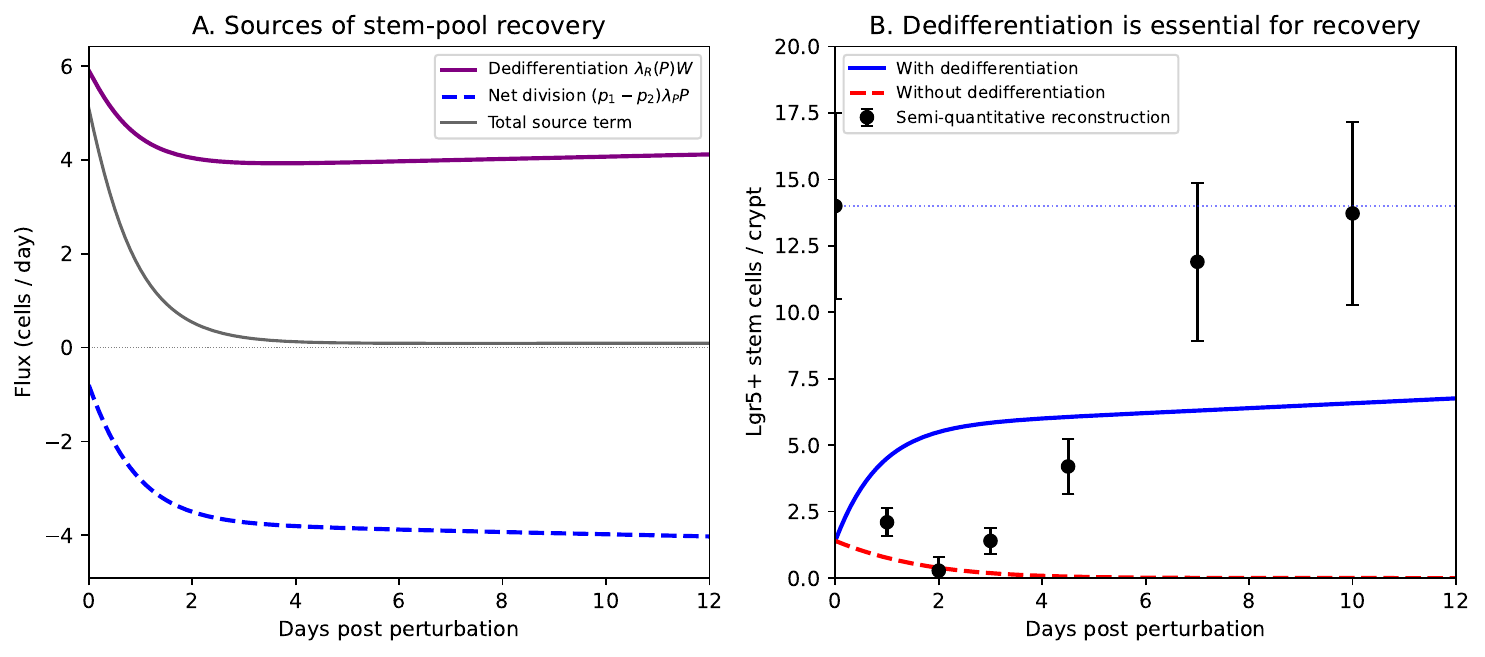}
    \caption{\textbf{Dedifferentiation is the dominant regenerative mechanism in the calibrated crypt model.}
    \textbf{(A) Source decomposition of stem recovery.} During the early recovery phase after irradiation-like depletion, the dominant positive contribution to stem restoration is the dedifferentiation flux
    $\lambda_R(\bar P)\bar W$, whereas the net division contribution
    $(p_1-p_2)\lambda_P\bar P$ is comparatively smaller and can remain negative during severe depletion. The total stem source term is their sum.
    \textbf{(B) Recovery with and without dedifferentiation.} Starting from the same post-injury depleted initial state, the full model (with dedifferentiation) restores the stem pool toward homeostasis, whereas the comparison model with $\lambda_R\equiv 0$ fails to recover the stem compartment. This mirrors the biological picture in which intestinal regeneration after stem-cell ablation depends strongly on progenitor-cell plasticity and ASCL2-dependent dedifferentiation.
    The comparison should be interpreted as an intervention on the regenerative dynamics from a common injury-depleted baseline, not as a claim that the no-dedifferentiation model shares the same pre-injury homeostatic state. \label{fig:crypt_dediffer}}
\end{figure}

This interpretation yields a clean mechanistic statement: following injury, the crypt
does not merely regrow; rather, it dynamically rebalances phenotypic payoffs until the
homeostatic equalization condition is restored. The replicator rest point therefore
organizes both steady maintenance and post-injury composition recovery.

\subsection{Biological predictions}
\label{subsec:crypt_predictions}

The calibrated case study yields three experimentally testable predictions.

First, the Ratio law predicts that perturbations to epithelial loss or shedding should
induce a compensatory shift in the equilibrium stem-to-TD ratio according to
\[
\frac{\bar P^*}{\bar W^*}=\frac{\delta}{\lambda_P(\bar W^*)}.
\]
Thus, increasing the effective loss rate $\delta$ should require either a larger stem
fraction or a larger effective production rate to preserve crypt homeostasis.

Second, the Equalization law predicts the dedifferentiation flux required to maintain
the homeostatic composition:
\[
\lambda_R(\bar P^*)=\delta\bigl(p_2(\bar W^*)-p_1(\bar W^*)\bigr).
\]
This provides a route for inferring otherwise hard-to-measure plasticity rates from
aggregate abundance measurements combined with proliferation and loss assays.

Third, the extinction--growth threshold of
Theorem~\ref{thm:extinction_growth} predicts a critical boundary in the balance between
renewal bias and loss rate. In the crypt setting, this implies that sufficiently severe
increases in epithelial attrition, or sufficiently strong suppression of regenerative
plasticity, should move the tissue toward a collapse regime in which a positive
homeostatic equilibrium no longer exists.

Figure~\ref{fig:crypt_pred} shows testable predictions of the calibrated intestinal-crypt case study.

\begin{figure}[htbp]
    \centering
    \includegraphics[width=\textwidth]{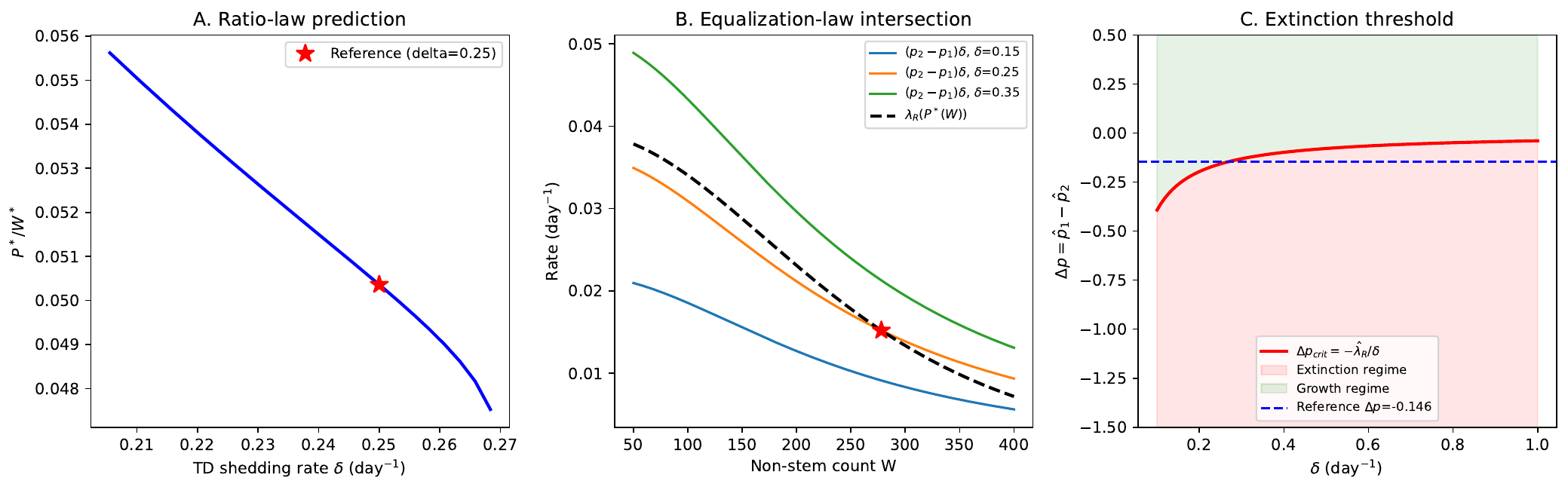}
    \caption{\textbf{Testable predictions of the calibrated intestinal-crypt case study.}
    \textbf{(A) Ratio-law prediction.} The equilibrium stem/TD ratio varies systematically with the effective TD loss rate $\delta$, in accordance with
    $\bar P^*/\bar W^*=\delta/\lambda_P(\bar W^*)$. In the present coarse-grained model, $\lambda_P(\bar W^*)$ is an aggregate effective production rate, so perturbations that alter epithelial loss or shedding are predicted to shift the equilibrium composition along this curve.
    \textbf{(B) Equalization-law geometry.} The homeostatic crypt state is determined by the intersection of the Ratio law and the Equalization law. The highlighted reference point marks the calibrated crypt equilibrium $(\bar P^*,\bar W^*)=(14,278)$.
    \textbf{(C) Extinction--growth threshold.} The reduced model predicts a threshold separating extinction and growth regimes, controlled by the balance between renewal bias and effective loss. Increasing epithelial attrition or suppressing regenerative plasticity moves the system toward the collapse boundary at which no positive homeostatic equilibrium persists.
    Together, these panels show how the aggregate laws derived from the mechanistic model yield directly falsifiable predictions for crypt composition, regenerative compensation, and loss of tissue integrity. \label{fig:crypt_pred}}
\end{figure}

\subsection{Limitations of the crypt coarse-graining}
\label{subsec:crypt_limitations}

The main biological limitation is the deliberate lumping of transit-amplifying cells
into the effective TD compartment. This is why the calibrated
$\lambda_P(\bar W^*)\approx4.9643\;\mathrm{day}^{-1}$ should not be compared directly to
the CBC single-cell cycling rate. A three-compartment stem/TA/TD extension would yield a
more literal mapping to crypt biology, but would also move the analysis away from the
exact two-dimensional closure established here.

A second limitation is that the irradiation-recovery trajectories used in the figures are
assembled as a semi-quantitative reconstruction from a literature spanning related but
not identical experimental contexts. Accordingly, the regeneration comparison should be
interpreted as a biologically grounded proof-of-principle case study rather than as a
formal statistical fit to one harmonized single-cohort dataset.

With these caveats, the murine intestinal crypt provides a compelling proof-of-principle
system: the aggregate homeostatic numbers are realistic, the injury-recovery mechanism is
biologically documented, and the induced game-theoretic laws generate concrete,
falsifiable predictions for how plasticity and feedback stabilize tissue maintenance.

\section{Discussion}
\label{sec:discussion}
\paragraph{Experimental interpretation of the homeostatic laws.}
At homeostasis, Eqs.~\eqref{eq:equalization_law}--\eqref{eq:ratio_law} can be read as two directly measurable balance statements. 
First, the Ratio Law, $\bar P^*/\bar W^*=\delta/\lambda_P(\bar W^*)$, predicts that the observed steady-state stem fraction is determined by a competition between TD clearance and the effective stem-cycle rate.
The TD clearance rate, $\delta$, can be measured using labeled-cohort survival or apoptosis assays.
The effective stem-cycle rate at the resident differentiated load, $\lambda_P(\bar W^*)$, can be estimated from EdU/BrdU incorporation, mitotic index, or time-lapse lineage tracking stratified by $\bar W^*$). 
Second, the Equalization Law, $\lambda_R(\bar P^*)=\delta\big(p_2(\bar W^*)-p_1(\bar W^*)\big)$, predicts the magnitude of the TD-to-stem return flux required to maintain a steady tissue composition. Experimentally, $\lambda_R(\bar P^*)$ can be inferred from fate-mapping or lineage tracing studies that quantify the rate at which marker-positive TD cells re-enter the stem compartment under steady-state conditions or following controlled micro-injury.
Together, these relations imply a concrete perturbation logic.
If plasticity is experimentally increased plasticity (e.g.\ injury cues that raise effective TD$\to$stem return), the steady ratio must shift along the algebraic curves in Fig.~\ref{fig:nash_geometry}. 
Conversely, if apoptosis or clearance $\delta$ is increased, the laws predict both the direction and magnitude of the compensatory shift in compartment sizes required to preserve equilibrium. 
The extinction--growth threshold (Theorem~\ref{thm:extinction_growth}) provides an orthogonal assay: depletion or repopulation experiments should display a sharp recovery vs.\ collapse boundary as renewal bias is tuned.

\subsection{Limitation and Extension}
\paragraph{On the assumption of uniform mortality}
Our derivation of the exact evolutionary game relies on the assumption of damage-independent mortality ($\delta(x) \equiv \delta$). 
Biologically, this regime corresponds to tissues where cell removal is driven primarily by extrinsic stochastic factors.
Examples include mechanical shedding in the intestinal epithelium \citep{blander_death_2016,williams_epithelial_2015}, random differentiation-linked loss, or environmental fluctuations, rather than the gradual accumulation of intracellular toxicity. 
In the context of ``neutral drift'' models of stem cell homeostasis \citep{xu_modeling_2018,traulsen_dynamics_2013}, clonal extinction is often modeled as a random process independent of fitness, aligning with our uniform rate hypothesis.

While strictly damage-dependent mortality ($\delta'(x) > 0$) would prevent the exact closure of the moment equations, it does not necessarily destroy the homeostatic stability. We argue that our reduced system captures the dominant order parameter (total tissue mass) dynamics. The analytical transparency gained by this simplification allows us to derive the explicit Equalization and Ratio Laws, offering a theoretical benchmark. Future work using moment-closure approximations could extend this framework to include weak damage-dependent mortality as a perturbation to this exact solution.

A natural extension is to incorporate intrinsic demographic fluctuations in division/death events. 
If spatial transport is introduced, one may seek a spectral analogue of the present scalar threshold laws. Recent mode-wise analyses of coupled mass-transport systems show that reciprocal feedback can be reduced to explicit Laplacian-mode instability criteria \citep{yu_mode-wise_2026}.
Recent chemical-Langevin work derived directly from continuous-time-Markov-chain event structure shows how to combine full state-dependent covariance, absorbing boundaries, and strong well-posedness in extinction-permitting population systems \citep{yu_full-covariance_2026}.
For stochastic population models, stochastic differential equation toolkits can provide well-posedness, moment control, and Lyapunov-type bounds (e.g., \citep{wang_analysis_2025-1}).
More generally, any stochastic extension of the present lineage model should be covariance-consistent at the event level rather than built from ad hoc diagonal noise. Recent mechanistic diffusion analyses emphasize that cross-covariance structure is dictated by channel stoichiometry and that open-domain and extinction-permitting absorbed formulations should be distinguished explicitly \citep{yu_beyond_2026}.
These tools could be combined with our exact closure to quantify how noise perturbs the ratio, equalization, and threshold predictions.

\begin{appendices}

\section{Technical lemmas and estimates}

\subsection{Notation and auxiliary bounds}
\label{app:notation_bounds}
For $T>0$ we work in
\[
\mathcal X_T:=C\!\big([0,T];L^1([0,\infty))\times L^1([0,\infty))\big),\qquad
\|(P,W)\|_{\mathcal X_T}:=\sup_{t\in[0,T]}\big(\|P(t,\cdot)\|_1+\|W(t,\cdot)\|_1\big).
\]
Throughout, $\|\cdot\|_1$ denotes the $L^1([0,\infty))$ norm, and we write
\[
\bar P(t)=\|P(t,\cdot)\|_1,\qquad \bar W(t)=\|W(t,\cdot)\|_1.
\]
Let $p_3(\cdot)=1-p_1(\cdot)-p_2(\cdot)$. Under Assumption~\ref{ass:basic}, the feedback maps are
bounded and Lipschitz on $[0,\infty)$. We set the global bounds
\begin{equation}\label{app:eq:bounds}
0\le p_i(\cdot)\le \hat p_i,\qquad 0\le p_3(\cdot)\le 1,\qquad
0\le \lambda_P(\cdot)\le \hat\lambda_P,\qquad 0\le \lambda_R(\cdot)\le \hat\lambda_R,
\end{equation}
and the global Lipschitz constants
\begin{equation}\label{app:eq:Lipschitz_constants}
L_{p_1}:=\sup_{x\ge 0}|p_1'(x)|,\quad
L_{p_2}:=\sup_{x\ge 0}|p_2'(x)|,\quad
L_{\lambda_P}:=\sup_{x\ge 0}|\lambda_P'(x)|,\quad
L_{\lambda_R}:=\sup_{x\ge 0}|\lambda_R'(x)|.
\end{equation}
Then $p_3$ is Lipschitz with constant $L_{p_3}:=L_{p_1}+L_{p_2}$.

\subsection{Kernel mass identities}
\label{app:kernel_mass}
A key structural ingredient in the balance-law reduction is that the nonlocal birth operators
preserve $L^1$-mass in each deterministic inheritance branch.

\begin{lemma}[Change-of-variables identity]\label{lem:cov_identity}
For any $\alpha\in(0,1)$ and any $f\in L^1_+([0,\infty))$,
\begin{equation}\label{app:eq:cov}
\int_0^\infty \frac{1}{\alpha}\, f\!\left(\frac{x}{\alpha}\right)\,dx
=\int_0^\infty f(y)\,dy.
\end{equation}
\end{lemma}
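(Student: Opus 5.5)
The identity is the linear change of variables $y=x/\alpha$, which maps $[0,\infty)$ bijectively onto itself (since $\alpha>0$). For nonnegative measurable $f$ this is justified by Tonelli and needs no integrability check; finiteness of both sides then follows from $f\in L^1_+$.

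\emph{Simple functions.} Take $f=\mathbf 1_{[a,b)}$ with $0\le a<b$. Then $x\mapsto \frac1\alpha f(x/\alpha)=\frac1\alpha\mathbf 1_{[\alpha a,\alpha b)}(x)$. Its integral is $\frac1\alpha(\alpha b-\alpha a)=b-a=\int_0^\infty f$. By linearity the identity holds for nonnegative step functions built from such intervals. More generally, it holds for indicators of Borel sets, because Lebesgue measure scales as $|\alpha E|=\alpha|E|$ under dilation.

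\emph{Passage to the limit.} For general $f\in L^1_+$, choose nonnegative simple functions $f_n\uparrow f$ pointwise a.e. Then $\frac1\alpha f_n(\cdot/\alpha)\uparrow\frac1\alpha f(\cdot/\alpha)$ a.e., since the dilation maps null sets to null sets. Applying the monotone convergence theorem to both sides gives \eqref{app:eq:cov}. In particular, the left-hand side is finite and equals $\|f\|_1$.

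There is no real obstacle here; the only point needing care is measurability of $x\mapsto f(x/\alpha)$ and preservation of null sets under dilation, and both follow from $x\mapsto x/\alpha$ being a bi-Lipschitz homeomorphism of $[0,\infty)$. The restriction $\alpha\in(0,1)$ is not used beyond $\alpha>0$. The same argument extends verbatim to signed $f\in L^1$ by splitting $f=f^+-f^-$. This is the form needed when the identity is applied to each branch $\alpha\in\{\alpha_i,\beta_i,\gamma_i\}$ in the birth operators \eqref{eq:birth_ops}, as in the proof sketch of Theorem~\ref{thm:balance-laws}.
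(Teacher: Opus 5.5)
Your proof is correct and uses the same substitution $y=x/\alpha$ that the paper applies in one line; you additionally justify the change of variables through indicators, the scaling $|\alpha E|=\alpha|E|$, and monotone convergence. One small correction: Tonelli's theorem is not what justifies the argument in your first paragraph, since no iterated integrals appear. The monotone convergence step you carry out afterwards is what does the work.
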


\begin{proof}
Set $y=x/\alpha$, so $dx=\alpha\,dy$ and the integral becomes
$\int_0^\infty f(y)\,dy$.
\end{proof}

\begin{lemma}[Birth-operator mass formulas]\label{lem:birth_mass}
Let $\mathcal B_P,\mathcal B_W$ be defined in \eqref{eq:birth_ops}. Then, for a.e.\ $t\ge 0$,
\begin{align}
\label{app:eq:BP_mass}
\int_0^\infty \mathcal B_P[P](t,x)\,dx
&=\big(2p_1(\bar W(t))+p_3(\bar W(t))\big)\lambda_P(\bar W(t))\,\bar P(t),\\
\label{app:eq:BW_mass}
\int_0^\infty \mathcal B_W[P](t,x)\,dx
&=\big(2p_2(\bar W(t))+p_3(\bar W(t))\big)\lambda_P(\bar W(t))\,\bar P(t),
\end{align}
and therefore
\begin{equation}\label{app:eq:BPBW_sum}
\int_0^\infty \big(\mathcal B_P[P]+\mathcal B_W[P]\big)(t,x)\,dx
=2\,\lambda_P(\bar W(t))\,\bar P(t).
\end{equation}
\end{lemma}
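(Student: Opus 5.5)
The plan is to integrate each term of the birth operators \eqref{eq:birth_ops} separately over $x\in(0,\infty)$ and apply Lemma~\ref{lem:cov_identity} branch by branch. Fix $t$ such that $P(t,\cdot)\in L^1_+([0,\infty))$; this holds for every $t$, since the solution of Theorem~\ref{thm:balance-laws} lies in $C([0,\infty);L^1)$ and is nonnegative. The prefactors $p_i(\bar W(t))\lambda_P(\bar W(t))$ depend only on $t$, not on $x$, so they can be pulled out of the $x$-integral. For any inheritance fraction $\alpha\in\{\alpha_1,\alpha_2,\gamma_1,\beta_1,\beta_2,\gamma_2\}\subset(0,1)$, Lemma~\ref{lem:cov_identity} with $f=P(t,\cdot)$ gives
\[
\int_0^\infty \frac{1}{\alpha}P\!\left(t,\frac{x}{\alpha}\right)dx=\bar P(t).
\]

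For the stem operator there are two $p_1$-branches (fractions $\alpha_1$ and $\alpha_2$) and one $p_3$-branch (fraction $\gamma_1$). Summing their integrals gives $(2p_1+p_3)\lambda_P\bar P$, which is \eqref{app:eq:BP_mass}. The same computation for $\mathcal B_W$, with two $p_2$-branches ($\beta_1,\beta_2$) and one $p_3$-branch ($\gamma_2$), gives \eqref{app:eq:BW_mass}.

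Adding the two identities and using $p_1+p_2+p_3=1$, the combined coefficient is $2p_1+2p_2+2p_3=2$. This yields \eqref{app:eq:BPBW_sum}.

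The only point that needs care is justifying the termwise integration. Each term is a nonnegative measurable function, so Tonelli's theorem, or simply linearity of the integral for nonnegative integrands, allows splitting the integral of the sum, and all quantities are finite because $P(t,\cdot)\in L^1$. Measurability of $x\mapsto P(t,x/\alpha)$ follows because a linear dilation maps null sets to null sets. I do not expect any genuine obstacle; the "a.e.\ $t$" qualifier is only needed if one works with representatives of $P$ that are defined up to null sets in $(t,x)$.
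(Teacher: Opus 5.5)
Your proposal is correct and follows the paper's proof: apply Lemma~\ref{lem:cov_identity} to each dilation branch of \eqref{eq:birth_ops}, collect the coefficients $2p_1+p_3$ and $2p_2+p_3$, and sum using $p_1+p_2+p_3=1$. One small caution: do not justify $P(t,\cdot)\in L^1_+$ by appealing to Theorem~\ref{thm:balance-laws}, because the paper uses this lemma (through Lemma~\ref{lem:mass_bound}) to get global existence in that theorem, which makes the appeal circular; instead state the identity for any nonnegative $P(t,\cdot)\in L^1$, such as a local mild solution, which is all your argument uses.
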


\begin{proof}
Apply Lemma~\ref{lem:cov_identity} term-by-term in \eqref{eq:birth_ops} and use
$p_1+p_2+p_3=1$.
\end{proof}

\subsection{Vanishing of transport boundary flux under homogeneous inflow}
\label{app:boundary_flux}
The following lemma formalizes why the boundary terms disappear when integrating the transport
equations over $x\ge 0$ under the homogeneous inflow condition.

\begin{lemma}[Boundary flux cancellation]\label{lem:boundary_flux}
Assume $v>0$ and $u\in C([0,T];L^1([0,\infty)))$ is a weak solution to
$\partial_t u+v\partial_x u=g$ on $(0,T)\times(0,\infty)$ with homogeneous inflow boundary
$u(t,0)=0$ (in the trace sense) and $g\in L^1((0,T);L^1)$.
Then, for a.e.\ $t\in(0,T)$,
\begin{equation}\label{app:eq:flux_zero}
\int_0^\infty v\,\partial_x u(t,x)\,dx=0
\end{equation}
in the distributional sense used to derive balance laws.
\end{lemma}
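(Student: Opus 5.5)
The plan is to test the weak formulation against the cutoff family $\psi_R$ already used in the proof sketch of Theorem~\ref{thm:balance-laws}, rather than integrating $\partial_x u$ pointwise. Fix $\psi_R\in C_c^\infty([0,\infty))$ with $\psi_R\equiv1$ on $[0,R]$, $\operatorname{supp}\psi_R\subset[0,R+1]$ and $\|\psi_R'\|_\infty\le C$ independent of $R$. The weak formulation with homogeneous inflow trace says that, for a.e.\ $t$ and in the sense of distributions in time,
\[
\frac{d}{dt}\int_0^\infty u\,\psi_R\,dx-\int_0^\infty v\,u\,\psi_R'\,dx
=\int_0^\infty g\,\psi_R\,dx,
\]
where the boundary term $v\,u(t,0)\psi_R(0)$ has been dropped because the trace vanishes. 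This is the precise meaning of ``$\int_0^\infty v\,\partial_x u\,dx$'': it is the limit as $R\to\infty$ of $-\int_0^\infty v\,u\,\psi_R'\,dx$ (plus the vanishing boundary contribution). So the claim reduces to showing that this quantity tends to zero.

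The key estimate is
\[
\Big|\int_0^\infty v\,u(t,x)\,\psi_R'(x)\,dx\Big|\le vC\int_R^{R+1}|u(t,x)|\,dx\le vC\int_R^\infty|u(t,x)|\,dx,
\]
which tends to $0$ as $R\to\infty$ for every $t$, since $u(t,\cdot)\in L^1$. To make this usable in time-integrated form, I would integrate over $[t_1,t_2]$. Because $u\in C([0,T];L^1)$, the set $\{u(t,\cdot)\}_{t\in[0,T]}$ is compact in $L^1$ and therefore has uniformly small tails: $\sup_t\int_R^\infty|u|\,dx\to0$. That uniformity is what lets the limit pass under the time integral. Dominated convergence handles the other terms: $\int u\psi_R\to\int u$ and $\int g\psi_R\to\int g$, with dominating functions $|u|$ and $|g|\in L^1((0,T);L^1)$.

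Passing $R\to\infty$ then gives $\int_0^\infty u(t_2)\,dx-\int_0^\infty u(t_1)\,dx=\int_{t_1}^{t_2}\!\int_0^\infty g\,dx\,dt$. This is exactly the statement that the transport flux contributes zero, and \eqref{app:eq:flux_zero} holds for a.e.\ $t$ by Lebesgue differentiation.

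The main obstacle is interpreting the boundary term at $x=0$. One needs that, for an $L^1$ weak solution of the transport equation with $v>0$, the inflow trace exists and enters the weak formulation as $v\,u(t,0)\psi(0)$. I would handle this by appealing to the mild (characteristic) representation: with zero inflow data, $u(t,x)=\int_{t-x/v}^{t}g(s,x-v(t-s))\,ds$ for $x<vt$, plus transported initial data otherwise. This formula shows directly that no mass enters through $x=0$. Equivalently, one can test the formulation with $\psi$ that is nonconstant near $0$ and observe that the trace term is zero by definition of the boundary condition. The uniform-tail step is routine by comparison.
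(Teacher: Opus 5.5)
Your proposal is correct and follows essentially the same route as the paper: test against the cutoffs $\psi_R$, drop the boundary term at $x=0$ using the zero inflow trace, and send the transport contribution $v\int_0^\infty u\,\psi_R'\,dx$ to zero via the $L^1$ tail on $[R,R+1]$. Your time integration with uniform tail control, the Lebesgue differentiation step, and the characteristic formula for the trace fill in rigor that the paper's sketch only asserts; they do not change the argument.
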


\begin{proof}[Proof sketch]
Test the weak formulation against $\psi_R(x)$ with $\psi_R\equiv 1$ on $[0,R]$,
$\mathrm{supp}\,\psi_R\subset[0,R+1]$, and $\|\psi_R'\|_\infty\le C$ independent of $R$.
Integration by parts yields the transport contribution
$v\int_0^\infty u(t,x)\psi_R'(x)\,dx$, which vanishes as $R\to\infty$ by dominated convergence and
$u(t,\cdot)\in L^1$. The inflow boundary condition ensures there is no boundary injection at $x=0$.
\end{proof}

\subsection{Mild formulation and Picard contraction}
\label{app:picard}
We record a clean version of the mild formulation and a contraction estimate sufficient for local
well-posedness.

Define characteristics
\[
X_P(s;t,x)=x-v_P(t-s),\qquad X_W(s;t,x)=x-v_W(t-s),\qquad 0\le s\le t,
\]
and integrating factors
\[
\Lambda_P(s;t):=\int_s^t \lambda_P(\bar W(\tau))\,d\tau,\qquad
\Lambda_W(s;t,x):=\int_s^t\big(\delta(X_W(\tau;t,x))+\lambda_R(\bar P(\tau))\big)\,d\tau.
\]
For $(P,W)\in\mathcal X_T$, define $\Psi(P,W)=(\widetilde P,\widetilde W)$ by
\begin{align}
\label{app:eq:mildP}
\widetilde P(t,x)
&=e^{-\Lambda_P(0;t)}\,P_0(x-v_P t)\,\mathbf 1_{x\ge v_P t}
+\int_0^t e^{-\Lambda_P(s;t)}\Big(\mathcal B_P[P]+\lambda_R(\bar P(s))W\Big)\!\big(s,X_P(s;t,x)\big)\mathbf 1_{x\ge v_P(t-s)}\,ds,\\
\label{app:eq:mildW}
\widetilde W(t,x)
&=e^{-\Lambda_W(0;t,x)}\,W_0(x-v_W t)\,\mathbf 1_{x\ge v_W t}
+\int_0^t e^{-\Lambda_W(s;t,x)}\mathcal B_W[P]\!\big(s,X_W(s;t,x)\big)\mathbf 1_{x\ge v_W(t-s)}\,ds.
\end{align}

\begin{lemma}[Local contraction estimate]\label{lem:contraction}
Let $M_0:=\|P_0\|_1+\|W_0\|_1$ and consider the closed ball
$\overline{\mathbb B}(0,2M_0)\subset\mathcal X_T$. There exists a constant $C=C(M_0)$, depending only
on $M_0$, the bounds \eqref{app:eq:bounds}, and the Lipschitz constants
\eqref{app:eq:Lipschitz_constants}, such that for all $(P_j,W_j)\in \overline{\mathbb B}(0,2M_0)$,
\begin{equation}\label{app:eq:contract_est}
\|\Psi(P_1,W_1)-\Psi(P_2,W_2)\|_{\mathcal X_T}
\le C\,(T+T^2)\,\|(P_1,W_1)-(P_2,W_2)\|_{\mathcal X_T}.
\end{equation}
In particular, for $T>0$ sufficiently small so that $C(T+T^2)<1$, $\Psi$ is a strict contraction on
$\overline{\mathbb B}(0,2M_0)$.
\end{lemma}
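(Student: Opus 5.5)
The plan is to split $\Psi(P_1,W_1)-\Psi(P_2,W_2)$ into the differences of the individual terms in \eqref{app:eq:mildP}--\eqref{app:eq:mildW}, and to bound each in $L^1$ uniformly for $t\in[0,T]$. Throughout, write $D:=\|(P_1,W_1)-(P_2,W_2)\|_{\mathcal X_T}$. Note that $|\bar P_1(t)-\bar P_2(t)|\le\|P_1(t)-P_2(t)\|_1\le D$, and likewise for $\bar W$. Each difference is handled with the elementary inequality $|ab-a'b'|\le|a-a'||b|+|a'||b-b'|$, together with two facts about the exponential factors.

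First, the exponential factors. Since $\lambda_P,\lambda_R,\delta\ge0$, every factor $e^{-\Lambda}$ lies in $(0,1]$. By $|e^{-a}-e^{-b}|\le|a-b|$ for $a,b\ge0$ and the Lipschitz bounds \eqref{app:eq:Lipschitz_constants},
\[
|\Lambda_P^{(1)}(s;t)-\Lambda_P^{(2)}(s;t)|\le L_{\lambda_P}(t-s)D\le L_{\lambda_P}TD .
\]
The same bound holds for $\Lambda_W$ with $L_{\lambda_R}$. The $\delta(X_W)$ parts cancel exactly, since they do not depend on the solution. Hence the initial-data terms differ in $L^1$ by at most $(L_{\lambda_P}+L_{\lambda_R})TD\,M_0$. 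Here I use that translation along characteristics composed with the cutoff $\mathbf 1_{x\ge vt}$ is an $L^1$ isometry onto $[vt,\infty)$.

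Second, the Duhamel integrals. For each fixed $s$, the map $x\mapsto f(s,X(s;t,x))\mathbf 1_{x\ge v(t-s)}$ has the same $L^1$ norm as $f(s,\cdot)$. So by Fubini the $L^1$ norm of each integral is at most $\int_0^t\|\cdot\|_1\,ds$. Apply Lemma~\ref{lem:cov_identity} branch by branch.
\begin{itemize}
\item The birth operator satisfies $\|\mathcal B_P[P_1]-\mathcal B_P[P_2]\|_1\le C_1D$. The constant $C_1$ collects the term $(2\hat p_1+1)\hat\lambda_P\|P_1-P_2\|_1$ and the coefficient differences $(2L_{p_1}\hat\lambda_P+L_{p_3}\hat\lambda_P+3L_{\lambda_P})\,|\bar W_1-\bar W_2|\,\|P_2\|_1$, using $\|P_2\|_1\le2M_0$.
\item The dedifferentiation term is handled the same way: $\|\lambda_R(\bar P_1)W_1-\lambda_R(\bar P_2)W_2\|_1\le(\hat\lambda_R+2M_0L_{\lambda_R})D$.
\item The difference of exponential weights inside the integral contributes $L TD$ times the $L^1$ norm of the source. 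On the ball, that norm is bounded by a constant times $M_0$.
\end{itemize}
Integrating over $s\in[0,t]$ turns the pure source differences into $O(T)D$. The exponential-weight differences become $\int_0^t T D\,ds=O(T^2)D$, which is where the $T^2$ in \eqref{app:eq:contract_est} comes from. The $\widetilde W$ equation is treated identically, using $\mathcal B_W$ and $e^{-\Lambda_W}$.

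Summing all contributions gives $C(M_0)(T+T^2)D$, which is \eqref{app:eq:contract_est}. For the final claim, I also need $\Psi$ to map the ball $\overline{\mathbb B}(0,2M_0)$ into itself. The same estimates with $(P_2,W_2)=0$ give $\|\Psi(P,W)\|_{\mathcal X_T}\le M_0+C'(M_0)T$, which is at most $2M_0$ for small $T$. Time-continuity into $L^1$ follows from continuity of translations in $L^1$.

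The main obstacle is bookkeeping rather than any hard estimate: the $x$-dependent factor $e^{-\Lambda_W(s;t,x)}$ must be treated carefully. The approach is to note that its $\delta$-part is identical for both iterates and bounded by one, so it never needs to be differenced. This keeps the constant independent of $\|\delta\|_\infty$ except through measurability, which \ref{H2} provides.
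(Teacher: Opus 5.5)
Your proposal is correct and takes essentially the same route as the paper's proof skeleton. You difference the mild map term by term, bound the exponential factors via the mean value theorem and Lipschitz continuity, use the change-of-variables isometry for the birth operators to get the $O(T)$ contribution, and get the $O(T^2)$ contribution from the exponential-weight differences inside the Duhamel integrals. You also supply two details the paper leaves implicit: the solution-independent factor $e^{-\int_s^t\delta(X_W)\,d\tau}\in(0,1]$ never needs to be differenced, and $\Psi$ maps $\overline{\mathbb B}(0,2M_0)$ into itself for small $T$.
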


\begin{proof}[Proof skeleton]
The proof proceeds term-by-term in \eqref{app:eq:mildP}--\eqref{app:eq:mildW}:
(i) differences of exponentials are bounded by integrals of coefficient differences, using the mean
value theorem and Lipschitz continuity;
(ii) the nonlocal birth terms satisfy
\[
\Big\|\frac{1}{\alpha}P(\cdot,\cdot/\alpha)\Big\|_{L^1}=\|P\|_{L^1}
\]
by Lemma~\ref{lem:cov_identity}, and coefficient differences,
controlled by the Lipschitz property of $\lambda_P$ and the uniform bound $\|P\|_1 \le 2M_0$ on the ball, contribute at most $\mathcal{O}(T)$;
(iii) nested time integrals yield the $\mathcal{O}(T^2)$ term.
All bounds close on $\overline{\mathbb B}(0,2M_0)$ because $\bar P,\bar W$ are uniformly bounded by
$2M_0$ on this set.
\end{proof}

\subsection{A priori mass estimate (global continuation)}
\label{app:mass_est}
Finally, we record the $L^1$ mass bound used to extend local solutions globally.

\begin{lemma}[Mass bound]\label{lem:mass_bound}
Let $(P,W)$ be a nonnegative mild solution on $[0,T]$ under Assumption~\ref{ass:basic}.
Define $M(t):=\|P(t)\|_1+\|W(t)\|_1$. Then
\begin{equation}\label{app:eq:mass_bound}
M(t)\le M(0)\,e^{\hat\lambda_P t},\qquad t\in[0,T],
\end{equation}
where $\hat\lambda_P=\lambda_P(0)$.
\end{lemma}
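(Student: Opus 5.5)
We prove the mass bound by integrating the PDE over $x\in(0,\infty)$ and reading off a differential inequality for $M(t)$. The computation is the one in the proof sketch of Theorem~\ref{thm:balance-laws}, but now for a general bounded $\delta(x)\ge 0$.

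First, we add the two equations of \eqref{eq:PDE_model} and test against the cutoffs $\psi_R$ of Lemma~\ref{lem:boundary_flux}. Letting $R\to\infty$ removes both transport fluxes: the inflow condition \eqref{eq:inflow_bc} kills the contribution at $x=0$, and $L^1$ tails kill the one at infinity. The dedifferentiation terms $+\lambda_R(\bar P)W$ and $-\lambda_R(\bar P)W$ cancel exactly in the sum. By Lemma~\ref{lem:birth_mass}, specifically \eqref{app:eq:BPBW_sum}, the two birth operators together contribute $2\lambda_P(\bar W)\bar P$, and the loss term $-\lambda_P(\bar W)\bar P$ leaves a net $\lambda_P(\bar W)\bar P$. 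Using nonnegativity of $P,W$ (inherited from the mild formulation \eqref{app:eq:mildP}--\eqref{app:eq:mildW}, whose kernels and integrating factors are nonnegative), we have $\bar P=\|P\|_1$ and $\bar W=\|W\|_1$. Together these give, in integrated form for all $t\in[0,T]$,
\[
M(t)=M(0)+\int_0^t\Big(\lambda_P(\bar W(s))\,\bar P(s)-\int_0^\infty\delta(x)W(s,x)\,dx\Big)\,ds .
\]

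Second, we bound the integrand. By \ref{H2} and $W\ge0$, the mortality integral is nonnegative, so it can be dropped. By \eqref{app:eq:bounds}, $\lambda_P\le\hat\lambda_P$, and $\bar P\le M$ by nonnegativity of $W$. Hence $M(t)\le M(0)+\hat\lambda_P\int_0^t M(s)\,ds$. Since $M\in C([0,T])$ because $(P,W)\in\mathcal X_T$, the integral form of Gronwall's inequality yields \eqref{app:eq:mass_bound}. Working with the integrated identity avoids any a.e.\ differentiability issue.

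The main obstacle is rigor in the first step for mild rather than classical solutions. We would handle this by justifying the integrated identity directly from the mild formula. Integrate \eqref{app:eq:mildP}--\eqref{app:eq:mildW} in $x$ with the substitution $y=X(s;t,x)$, which has unit Jacobian and maps $\{x\ge v(t-s)\}$ onto $[0,\infty)$. This produces Duhamel-type identities for $\|P(t)\|_1$ and $\|W(t)\|_1$ in which boundary terms never appear. As an alternative, one can approximate the data by smooth compactly supported functions, use the continuous dependence furnished by Lemma~\ref{lem:contraction}, and pass to the limit in the integrated identity.
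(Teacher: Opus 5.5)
Your proposal is correct and follows the paper's proof: integrate the summed equations over $x\ge 0$, use the birth-mass identity \eqref{app:eq:BPBW_sum} and the boundary-flux lemma to get the net source $\lambda_P(\bar W)\bar P-\int_0^\infty\delta W\,dx$, drop the nonnegative mortality term, bound by $\hat\lambda_P M$ via \eqref{app:eq:bounds}, and apply Gr\"onwall. Working with the integrated identity and integral-form Gr\"onwall, and sketching how to justify the balance from the mild formula, only adds rigor the paper's one-line argument leaves implicit; like the paper, you rely on $\lambda_P\le\hat\lambda_P$, which holds for decreasing (e.g.\ Hill) feedback rather than following from \ref{H1} alone.
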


\begin{proof}
Integrate the PDEs over $x\ge 0$ using Lemmas~\ref{lem:birth_mass} and~\ref{lem:boundary_flux}.
Nonnegativity and $\lambda_P(\bar W)\le\hat\lambda_P$ yield
$\dot M(t)\le \hat\lambda_P\,\bar P(t)\le \hat\lambda_P\,M(t)$, hence \eqref{app:eq:mass_bound}
by Gr\"onwall.
\end{proof}

\end{appendices}

\medskip\noindent\textbf{Acknowledgments:}  
The authors confirm that this research received no external funding.

\medskip\noindent\textbf{Author contributions:}  
Both authors contributed equally to this article and as Co-First Authors. Both two authors read and approved the final
manuscript

\medskip\noindent\textbf{Competing interests:}  
The authors declare that they have no conflict of interest.

\medskip\noindent\textbf{Data availability:}  
Data sharing is not applicable to this article as no datasets were generated or analyzed during the current study.

\bibliography{reference}

\end{document}